\documentclass[12pt,reqno]{article}

\usepackage[usenames]{color}
\usepackage{amssymb}
\usepackage{amsmath}
\usepackage{amsthm}
\usepackage{amsfonts}
\usepackage{amscd}
\usepackage{graphicx}
\usepackage{ifthen}
\usepackage{tikz}
\usetikzlibrary{calc}

\usepackage[colorlinks=true,
linkcolor=webgreen,
filecolor=webbrown,
citecolor=webgreen]{hyperref}

\definecolor{webgreen}{rgb}{0,.5,0}
\definecolor{webbrown}{rgb}{.6,0,0}

\usepackage{fullpage}
\usepackage{float}

\newcommand{\seqnum}[1]{\href{https://oeis.org/#1}{\rm \underline{#1}}}

\allowdisplaybreaks

\begin{document}

\theoremstyle{plain}
\newtheorem{theorem}{Theorem}
\newtheorem{corollary}[theorem]{Corollary}
\newtheorem{lemma}[theorem]{Lemma}
\newtheorem{proposition}[theorem]{Proposition}

\theoremstyle{definition}
\newtheorem{definition}[theorem]{Definition}
\newtheorem{example}[theorem]{Example}
\newtheorem{conjecture}[theorem]{Conjecture}
\newtheorem{notation}[theorem]{Notation}
\newtheorem{remark}[theorem]{Remark}

\theoremstyle{remark}

\begin{center}
\vskip 1cm{\LARGE\bf
Enumerating Multi-Operator Monomials in\\
\vskip .1in
Commutative and Noncommutative Settings
}
\vskip 1cm
\large
Yu Hin (Gary) Au and Murray R. Bremner\\
Department of Mathematics and Statistics\\
University of Saskatchewan\\
Saskatoon, SK S7N 5E6\\
Canada \\
\href{mailto:au@math.usask.ca}{\tt au@math.usask.ca} \\
\href{mailto:bremner@math.usask.ca}{\tt bremner@math.usask.ca} \\
\ \\
\end{center}

\begin{abstract}
We study enumeration problems for multi-operator monomials generated from one indeterminate by an associative multiplication together with finitely many unary operators. We consider four regimes, according to whether multiplication is commutative and whether the unary operators commute. In the case where the unary operators do not commute, we obtain explicit multigraded generating functions and coefficient formulas, including a multinomial refinement of the Narayana numbers, together with interpretations in terms of rooted trees, restricted lattice paths, and binary trees. When the unary operators commute, we derive canonical representatives and effective recurrences, with corresponding monotonicity conditions in the combinatorial models. When multiplication is commutative, the sequence decomposition is replaced by a multiset decomposition, leading to exp--log generating functions and Euler-transform recurrences. In special cases, the resulting sequences recover classical families including the Catalan numbers, the small Schr\"oder numbers, and rooted-tree numbers.
\end{abstract}

\section{Introduction}\label{sec1} 

Let $d\ge 1$, and let $P_1,\ldots,P_d$ be unary operators. 
We consider the collection $V_d$ of multi-operator monomials generated from a single 
indeterminate $\ast$ by repeated use of an associative binary multiplication 
(initially noncommutative) and the unary operators $P_1,\ldots,P_d$. 
More concretely, $V_d$ is the smallest set such that 
\begin{itemize} 
\item (Single indeterminate) $\ast\in V_d$;
\item (Multiplication / concatenation) if $v,v'\in V_d$, then $vv'\in V_d$;
\item (Unary operator) if $v\in V_d$, then $P_i(v)\in V_d$ for every $1\le i\le d$.
\end{itemize} 
Thus, $V_d$ is the set of monomials in the free operated semigroup on one 
generator with unary operators $P_1,\ldots,P_d$. We study four closely related enumerative regimes arising from this construction, depending on whether the unary operators commute and whether the associative multiplication is commutative.

\subsection{Enumeration problems}

We study several natural enumeration problems associated with $V_d$.
\begin{enumerate}
\item
\textbf{Degree--multiplicity enumeration.}
Given $r\ge 1$ and $\mathbf{s}=(s_1,\ldots,s_d)\in\mathbb{Z}_{\ge0}^d$, let 
\[ 
a_d(r;\mathbf{s}) 
\] 
denote the number of monomials $v\in V_d$ with exactly $r$ occurrences of the 
indeterminate $\ast$ (called the \emph{degree} of $v$) and exactly $s_i$ occurrences of the operator $P_i$ for each $1\le i\le d$ (called the \emph{multiplicity} of $P_i$ in $v$). This gives an intrinsic degree--multiplicity grading on $V_d$. For example, Figure~\ref{fig:a221boxes} lists the 30 monomials counted by $a_{2}(2; (2,1))$.

\begin{figure}[ht]
\centering
\small
\setlength{\fboxsep}{5pt}

\newcommand{\mybox}[1]{\fbox{\parbox[c][1.5cm][c]{0.35\textwidth}{\centering $#1$}}}

\begin{tabular}{cc}
\mybox{
\begin{array}{c}
P_1(P_1(P_2(\ast\ast)))\\
P_1(P_2(P_1(\ast\ast)))\\
P_2(P_1(P_1(\ast\ast)))
\end{array}}
&
\mybox{
\begin{array}{cc}
P_1(P_1(P_2(\ast)\ast)) & P_1(P_1(\ast P_2(\ast)))
\end{array}}
\\[1.2em]

\mybox{
\begin{array}{cc}
P_1(P_2(P_1(\ast)\ast)) & P_1(P_2(\ast P_1(\ast)))\\
P_2(P_1(P_1(\ast)\ast)) & P_2(P_1(\ast P_1(\ast)))
\end{array}}
&
\mybox{
\begin{array}{cc}
P_1(P_1(P_2(\ast))\ast) & P_1(\ast P_1(P_2(\ast)))\\
P_1(P_2(P_1(\ast))\ast) & P_1(\ast P_2(P_1(\ast)))
\end{array}}
\\[1.2em]

\mybox{
\begin{array}{cc}
P_2(P_1(P_1(\ast))\ast) & P_2(\ast P_1(P_1(\ast)))
\end{array}}
&
\mybox{
\begin{array}{cc}
P_1(P_1(\ast)P_2(\ast)) & P_1(P_2(\ast)P_1(\ast))
\end{array}}
\\[1.2em]

\mybox{
\begin{array}{c}
P_2(P_1(\ast)P_1(\ast))
\end{array}}
&
\mybox{
\begin{array}{cc}
P_1(P_1(P_2(\ast)))\ast & \ast P_1(P_1(P_2(\ast)))\\
P_1(P_2(P_1(\ast)))\ast & \ast P_1(P_2(P_1(\ast)))\\
P_2(P_1(P_1(\ast)))\ast & \ast P_2(P_1(P_1(\ast)))
\end{array}}
\\[1.2em]

\mybox{
\begin{array}{cc}
P_1(P_1(\ast))P_2(\ast) & P_2(\ast)P_1(P_1(\ast))
\end{array}}
&
\mybox{
\begin{array}{cc}
P_1(P_2(\ast))P_1(\ast) & P_1(\ast)P_1(P_2(\ast))\\
P_2(P_1(\ast))P_1(\ast) & P_1(\ast)P_2(P_1(\ast))
\end{array}}
\end{tabular}

\caption{The 30 monomials counted by $a_2(2;(2,1))$, arranged in 10 boxes.
Within each box, monomials in the same row become equivalent when multiplication is
commutative, and monomials in the same column become equivalent when $P_1$ and $P_2$ commute.}\label{fig:a221boxes}
\end{figure}

\item
\textbf{Length-graded enumeration via bracketed words.}
A second family of problems arises by encoding each monomial in $V_d$ as a bracketed 
word. For example, if $d=2$ and we use the delimiters $(\cdot)$ and $[\cdot]$ 
to denote the unary operators $P_1$ and $P_2$ respectively, then the monomial
${\ast}P_1({\ast}P_2({\ast}{\ast}))$
is encoded by the bracketed word
${\ast}({\ast}[{\ast}{\ast}])$.
We assign length 1 to each opening and closing delimiter, and length $\ell$ to the symbol $\ast$, where $\ell \ge 1$ is fixed. Thus, the bracketed word displayed above has length $4\ell + 4$. For $\ell = 1$ and $\ell=2$, we obtain a natural correspondence between these bracketed words and Motzkin and Schr\"{o}der paths, respectively. 
This monomial-to-word correspondence leads to length-graded enumeration problems whose resulting sequences generalize the classical Catalan numbers and the small Schr\"{o}der numbers (respectively \seqnum{A000108} and \seqnum{A001003} in the On-Line Encyclopedia of Integer Sequences (OEIS)~\cite{OEIS}).

\item
\textbf{Enumeration modulo commutativity of the unary operators.}
A third family of problems arises when we impose the additional relations 
\[
P_i(P_j(v))\sim P_j(P_i(v))
\qquad (1\le i < j\le d,\; v\in V_d),
\]
so that the unary operators commute pairwise. For example, in Figure~\ref{fig:a221boxes}, monomials in the same column within the same box are equivalent when $P_1$ and $P_2$ commute. One may then count equivalence classes of monomials under the congruence generated by 
these relations, or equivalently work with canonical representatives in which the unary 
operators are arranged in a fixed order along each maximal unary chain. 
This leads to commutative analogs of the multigraded counts and of the length-graded 
sequences above. 

\item
\textbf{Enumeration modulo commutativity of the binary operation.}
Finally, one may also ask what happens when the associative binary multiplication is made
commutative. For example, in Figure~\ref{fig:a221boxes}, monomials in the same row within the same box are equivalent when multiplication is commutative.  We show that the resulting generating functions are governed by the classical
Euler transform, and that in this case we recover the rooted-tree sequence
\seqnum{A000081}, together with a number of generalizations.
\end{enumerate}

These combinatorial problems originated in Bremner's work on nonassociative algebras and algebraic operads. For an introduction to operads with an emphasis on algorithmic aspects, see Bremner and Dotsenko~\cite{BD}. In particular, consider the nonsymmetric operad generated by a noncommutative associative (binary) 
operation and a finite number $d$ of commuting unary operators;
this operad is multigraded by the degree $r$ of the indeterminate $\ast$ and the multiplicities 
$\mathbf{s}=(s_1,\ldots,s_d)$ of the operators.
One of the main results of this paper (Theorem~\ref{thm:sec3multirec}) gives an effective
recurrence for the dimensions $a^c_d(r;\mathbf{s})$ of the homogeneous components of this operad.  
An important quotient of this operad occurs when each unary operator satisfies the Rota-Baxter (RB) relation; 
see Rota \cite{Rota}. 
Aguiar and Moreira \cite{AM} discuss the combinatorics of free RB algebras.  
Guo and Sit \cite{GuoSit} study the enumeration of RB words.  
Aguiar \cite{Aguiar} showed that from an RB operator on an associative algebra one may construct 
a dendriform algebra; for the latter, see Loday \cite[\S 5]{Loday}.  
From two commuting RB operators one obtains the quadri-algebras of Aguiar and Loday \cite{AL}.  
Three commuting RB operators produce the octo-algebras of Leroux \cite[\S 5]{Leroux}.  
Extension of this construction to an arbitrary finite number of commuting RB operators has been studied 
by Ebrahimi-Fard and Guo \cite[Corollary 5.3]{EFG}.  
Gr\"obner bases for multi-operator algebras have been constructed by Bokut et al.~\cite{BCQ} 
and Liu et al.~\cite{LQQZ}.

Our present work also builds on the well-developed literature on combinatorial realizations 
of algebraic objects. 
In particular, Guo \cite{GuoOSMRT} treated operated semigroups via Motzkin paths and rooted trees, 
and this was followed by work on free operated monoids and on bracketed subwords in 
free operated semigroups \cite{GuoZhengRLS,ZhangGaoFOM}. 
On the enumerative side, generalized Motzkin and Schr\"oder paths with colored up-steps or horizontal steps of fixed length have been studied from several perspectives; see, for example, Sulanke \cite{SulankeGM}, Stein and Waterman \cite{SteinWaterman}, and later work on hill-free, colored, and higher-order Motzkin-type families, as well as relations between Schr\"oder and Motzkin paths \cite{BarcucciPergolaPinzaniRinaldi,BrennanKnopfmacher,DeJagerNaquinSeidlDrube,
HaugPrellbergSiudem,MansourSchorkSun,YangYang}.

While these free operated constructions and their realizations are by now standard, our focus here is explicit enumeration in the one-generator setting with several unary operators across all four commutativity regimes described above. In the noncommutative-multiplication settings, we obtain multigraded generating functions, closed formulas, and effective recurrences, together with path and tree models. When multiplication is commutative, the sequence-of-atoms decomposition is replaced by a multiset-of-atoms decomposition, leading to Euler-transform recurrences and rooted-tree
analogs. In the one-operator case we recover several classical sequences, while for $d\ge 2$ many of the resulting families appear not to have been recorded previously in the OEIS. As part of this project, we submitted 15 new sequences to the OEIS under the allocated identifiers \seqnum{A394939}--\seqnum{A394953}. For OEIS submission purposes, we include the initial empty object when appropriate, so the corresponding entries naturally use offset 0. Throughout the paper, however, we index the length-graded families from the first positive size, since this convention is more convenient for our statements and tables.

For convenience, Table~\ref{tab:regimes} summarizes the four commutativity regimes
studied in this paper, together with the corresponding decomposition principles,
main enumerative outputs, and the sections in which they are treated.

\begin{table}[ht]
\scriptsize
\centering
\begin{tabular}{|l|l|p{3cm}|p{5cm}|l|}
\hline
Unary operators & Multiplication & Basic decomposition & Main outputs & Section \\
\hline
Noncommuting & Noncommutative & Sequence of atoms &
Explicit multigraded generating functions; closed coefficient formulas; multinomial refinement of Narayana numbers &
Section~\ref{sec2} \\
\hline
Commuting & Noncommutative & Sequence of atoms with canonical unary ordering &
Multigraded functional equations and effective recurrences; canonical representatives with monotonicity conditions &
Section~\ref{sec3} \\
\hline
Noncommuting & Commutative & Multiset of atoms &
Exp--log generating functions; Euler-transform recurrences &
Section~\ref{sec41} \\
\hline
Commuting & Commutative & Multiset of atoms with canonical unary ordering &
Euler-transform recurrences with commuting-unary refinements &
Section~\ref{sec42} \\
\hline
\end{tabular}
\caption{Summary of the four commutativity regimes studied in this paper.}
\label{tab:regimes}
\end{table}

\subsection{A roadmap of the paper}

Section~\ref{sec2} studies the noncommuting unary case, in which the associative multiplication is noncommutative and the unary operators do not commute. We derive the multigraded generating function and an explicit coefficient formula for $a_d(r;\mathbf{s})$, then specialize to the length-graded families $b_{d,\ell}(n)$ and describe several combinatorial interpretations.

Section~\ref{sec3} turns to the case in which the unary operators commute while the binary operation remains noncommutative. We introduce canonical representatives for the corresponding equivalence classes, derive the multigraded functional equation and recurrences, and then study the associated length-graded sequences and their combinatorial models.

Section~\ref{sec4} considers the extension in which the associative binary multiplication is commutative. The sequence-of-atoms decomposition from Sections~\ref{sec2} and~\ref{sec3} is then replaced by a multiset-of-atoms decomposition, leading to Euler-transform-type generating functions and recurrences in both the noncommuting-unary and commuting-unary settings. In the case $d=1$, this recovers several classical rooted-tree sequences. Section~\ref{sec5} then compares the asymptotic behavior of the four families of length-graded sequences studied in the paper.

\section{The noncommuting unary case}\label{sec2}

In this section we assume that the binary multiplication is associative but not commutative,
and that the unary operators $P_1,\ldots,P_d$
are pairwise distinct and do \emph{not} commute with one another.
Thus, two monomials that differ only by permuting nested unary operators are regarded as distinct. We study the resulting enumeration problems in three steps:
first by degree and operator multiplicity, then by two natural length gradings, and
finally via several combinatorial interpretations.

\subsection{Degree and multiplicity grading}

We begin with the most refined enumeration problem. Given an integer $k \geq 1$, let
\[
[k] := \{ 1, 2, \ldots, k\}.
\]
For $r\ge 1$ and $\mathbf{s}=(s_1,\ldots,s_d)\in\mathbb{Z}_{\ge0}^d$, recall that $a_d(r;\mathbf{s})$ denotes the number of monomials in $V_d$ containing exactly $r$
occurrences of the indeterminate $\ast$ and exactly $s_i$ occurrences of the unary
operator $P_i$ for each $i \in [d]$. We encode these numbers in the multivariate generating function
\[
A_d(u;\mathbf{p})
:=
\sum_{r\ge1}\;\sum_{s_1,\ldots,s_d\ge0}
a_d(r;\mathbf{s})\,u^r p_1^{s_1}\cdots p_d^{s_d},
\]
where $\mathbf{p}=(p_1,\ldots,p_d)$. Next, we say that a monomial is an \emph{atom} if it cannot be written as a product of
two nontrivial monomials.
Equivalently, an atom is either the single indeterminate $\ast$, or a monomial of the
form $P_i(v)$ where $i \in [d]$ and $v\in V_d$.
Let
\[
\overline a_d(r;\mathbf{s})
\]
denote the corresponding multigraded atom count, and let
\[
\overline A_d(u; \mathbf{p})
:=
\sum_{r\ge1}\;\sum_{s_1,\ldots,s_d\ge0}
\overline a_d(r;\mathbf{s})\,u^r p_1^{s_1}\cdots p_d^{s_d}
\]
be the atom generating function.

This atom decomposition gives two immediate relations between $A_d$ and $\overline A_d$.
First, every monomial factors uniquely as a sequence of atoms.
Thus, if we let $\overline V_d\subseteq V_d$ be the set of atoms, then there is a natural
bijection between $V_d$ and
\[
\overline V_d\;\cup\;\overline V_d^2\;\cup\;\overline V_d^3\;\cup\;\cdots.
\]
Consequently,
\begin{equation}\label{eq:sec2seq}
A_d(u;\mathbf{p})
=
\sum_{j \ge1}\bigl(\,\overline A_d(u;\mathbf{p})\bigr)^j
=
\frac{\overline A_d(u;\mathbf{p})}{1-\overline A_d(u;\mathbf{p})}.
\end{equation}
Second, an atom is either the single indeterminate $\ast$, contributing $u$, or the result
of applying one of the operators $P_1,\ldots,P_d$ to an arbitrary monomial, contributing
\[
(p_1+\cdots+p_d)\,A_d(u;\mathbf{p}).
\]
Hence
\begin{equation}\label{eq:sec2atom}
\overline A_d(u;\mathbf{p})
=u+(p_1+\cdots+p_d)\,A_d(u;\mathbf{p}).
\end{equation}
Using the notation
\[
|\mathbf{p}| :=p_1+\cdots+p_d,
\]
as well as combining~\eqref{eq:sec2seq} and~\eqref{eq:sec2atom}, we obtain the quadratic equation
\begin{equation}\label{eq:sec2masterquad}
|\mathbf{p}| A_d(u;\mathbf{p})^2-(1-u- |\mathbf{p}|)A_d(u;\mathbf{p})+u=0.
\end{equation}
Then we have the following.

\begin{theorem}\label{thm:sec2multigradedgf}
Let $|\mathbf{p}| =p_1+\cdots+p_d$. Then
\begin{equation}\label{eq:sec2Aclosed}
A_d(u;\mathbf{p})
=
\frac{1-u-|\mathbf{p}|-\sqrt{(1-u-|\mathbf{p}|)^2-4u|\mathbf{p}|}}{2|\mathbf{p}|}.
\end{equation}
\end{theorem}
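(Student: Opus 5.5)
The plan is to solve the quadratic \eqref{eq:sec2masterquad} for $A_d$ and then decide which of the two roots is the generating function. As a quadratic in $A=A_d(u;\mathbf{p})$ with leading coefficient $|\mathbf{p}|$, the quadratic formula gives
\[
A=\frac{1-u-|\mathbf{p}|\pm\sqrt{(1-u-|\mathbf{p}|)^2-4u|\mathbf{p}|}}{2|\mathbf{p}|}.
\]
Here the square root is interpreted as the formal power series in $\mathbb{Q}[[u,p_1,\ldots,p_d]]$ with constant term $1$. The radicand has constant term $1$, so this series exists and is unique by the binomial series.

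The main point is the sign choice, which I would fix by checking constant terms and divisibility. Write $\Delta=(1-u-|\mathbf{p}|)^2-4u|\mathbf{p}|$ and $\sqrt{\Delta}=1+\cdots$.

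With the plus sign, the numerator has constant term $2$. Dividing by $|\mathbf{p}|$ then does not give a power series, since the result would contain $2/|\mathbf{p}|$. So the plus root is not a formal power series, whereas $A_d$ is one.

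With the minus sign, the numerator must be shown to be divisible by $|\mathbf{p}|$ in the power series ring. I would do this by setting $\mathbf{p}=0$. Then $\Delta$ becomes $(1-u)^2$, whose square root with constant term $1$ is $1-u$, so the numerator becomes $(1-u)-(1-u)=0$. Hence the numerator lies in the ideal generated by $p_1,\ldots,p_d$.

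Divisibility by the linear form $|\mathbf{p}|$ itself needs a further step. One option is the substitution $p_1=t-p_2-\cdots-p_d$, which makes $\Delta$ a series in $u$ and $t$ alone, and then the argument above applies in $t$. A cleaner option is to multiply out: $(1-u-|\mathbf{p}|-\sqrt\Delta)(1-u-|\mathbf{p}|+\sqrt\Delta)=4u|\mathbf{p}|$. The second factor is invertible, with constant term $2$, so the first factor equals $4u|\mathbf{p}|$ times a unit. This gives the divisibility directly. It also shows that the minus root has constant term $0$, which matches $A_d$ since $a_d$ has no $r=0$ terms.

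Finally, to justify that $A_d$ equals this root, I would argue uniqueness. The recursion \eqref{eq:sec2seq}--\eqref{eq:sec2atom} determines the coefficients of $A_d$ uniquely by induction on total degree $r+\sum s_i$. Equivalently, rewriting \eqref{eq:sec2masterquad} as $A=u+(u+|\mathbf{p}|)A+|\mathbf{p}|A^2$ gives a contraction in the $(u,\mathbf{p})$-adic topology, so it has a unique solution with zero constant term. The minus root is a power series with zero constant term satisfying the quadratic, so it must equal $A_d$, which proves \eqref{eq:sec2Aclosed}.

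I expect the main obstacle to be the sign and divisibility bookkeeping just described; the rest is the quadratic formula.
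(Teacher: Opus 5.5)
Your proposal is correct and is essentially the paper's argument: solve the quadratic \eqref{eq:sec2masterquad} and take the branch that vanishes at the origin, which the paper phrases as the branch with $A_d(0;\mathbf{p})=0$. Your extra steps, namely the identity that the product of the two candidate numerators equals $4u|\mathbf{p}|$ (giving divisibility by $|\mathbf{p}|$ and zero constant term) and the $(u,\mathbf{p})$-adic contraction argument for uniqueness, make rigorous the formal-power-series details that the paper's one-line proof leaves implicit.
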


\begin{proof}
We solve \eqref{eq:sec2masterquad} for $A_d(u;\mathbf{p})$;
the branch satisfying $A_d(0;\mathbf{p})=0$ is~\eqref{eq:sec2Aclosed}.
\end{proof}

For future reference, we note that rearranging \eqref{eq:sec2masterquad} gives
\begin{equation}\label{eq:sec2masterfunc}
A_d(u;\mathbf{p})
=u+uA_d(u;\mathbf{p})+|\mathbf{p}| A_d(u;\mathbf{p})+|\mathbf{p}|A_d(u;\mathbf{p})^2.
\end{equation}

We next provide a closed-form formula for $a_d(r; \mathbf{s})$. We begin with the known one-operator case $d=1$. Given integers $n,k$ where $0 \leq k \leq n-1$, let 
\[
N(n,k) := \frac{1}{n} \binom{n}{k} \binom{n}{k+1}
\]
denote the \emph{Narayana numbers}~\seqnum{A001263}. Then Bremner and Elgendy~\cite[Lemma 2.5]{BremnerE22} showed that
\begin{equation}\label{eq:sec2multigradedd=1}
a_1(r;k) = N(r+k, k)
\end{equation}
for every $r \geq 1$ and $k \geq 0$. 
A simple combinatorial proof of this is obtained by mapping each monomial counted by $a_1(r;k)$ 
(in bracketed-word format) to Dyck paths as follows:
\begin{itemize}
\item
each opening bracket maps to an up step $U$;
\item
each closing bracket maps to a down step $D$;
\item
each instance of the indeterminate $\ast$ maps to a peak $UD$.
\end{itemize}
This gives a bijection between the monomials counted by $a_1(r;k)$ and Dyck paths with semilength $r+k$ and $r$ peaks, which is counted by $N(r+k,k)$.  Subsequently, Au and Bremner~\cite{AuB25} further generalized this result to count operator monomials generated by one unary operator and a multiplication operation of arbitrary arity, obtaining a family of generalized Narayana numbers. For introductions to the Narayana numbers, see Petersen~\cite[Chapter 2]{Petersen15} or Grimaldi~\cite[Chapter 33]{Grimaldi12}.

Next, we show that $a_d(r;\mathbf{s})$ is simply the multinomial refinement of the
one-operator formula~\eqref{eq:sec2multigradedd=1}.

\begin{theorem}\label{thm:sec2multigradedformula}
Let $d\ge 1$ and fix $\mathbf{s}=(s_1,\ldots,s_d)\in\mathbb{Z}_{\ge0}^d$.
Then
\begin{equation}\label{eq:sec2multigradedformula}
a_d(r;\mathbf{s})
=
\binom{|\mathbf{s}|}{s_1,\ldots,s_d}\,
N(r+|\mathbf{s}|,|\mathbf{s}|).
\end{equation}
\end{theorem}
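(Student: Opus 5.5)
The plan is to reduce to the case $d=1$ already settled in \eqref{eq:sec2multigradedd=1}. The generating function of Theorem~\ref{thm:sec2multigradedgf} depends on $\mathbf{p}$ only through the sum $|\mathbf{p}|=p_1+\cdots+p_d$. Concretely, we will show that
\[
A_d(u;p_1,\ldots,p_d)=A_1(u;\,p_1+\cdots+p_d).
\]
This can be seen directly from the quadratic \eqref{eq:sec2masterquad}. Its coefficients involve $\mathbf{p}$ only via $|\mathbf{p}|$. The root with zero constant term in $u$ is unique as a formal power series, since \eqref{eq:sec2masterfunc} determines the coefficients recursively in total degree. Hence both sides are the same power series after the substitution $q=|\mathbf{p}|$.

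Next we expand. Write $A_1(u;q)=\sum_{r\ge1}\sum_{k\ge0} a_1(r;k)\,u^r q^k$ and substitute $q=p_1+\cdots+p_d$. The multinomial theorem gives
\[
(p_1+\cdots+p_d)^k=\sum_{s_1+\cdots+s_d=k}\binom{k}{s_1,\ldots,s_d}p_1^{s_1}\cdots p_d^{s_d}.
\]
Extracting the coefficient of $u^r p_1^{s_1}\cdots p_d^{s_d}$ then forces $k=|\mathbf{s}|$. This yields
\[
a_d(r;\mathbf{s})=\binom{|\mathbf{s}|}{s_1,\ldots,s_d}\,a_1(r;|\mathbf{s}|).
\]
Invoking \eqref{eq:sec2multigradedd=1} with $k=|\mathbf{s}|$ gives \eqref{eq:sec2multigradedformula}. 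Rearranging sums is legitimate because, for fixed $r$ and $\mathbf{s}$, only finitely many terms contribute.

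As a sanity check, and as an alternative bijective argument, there is a direct map. Given a monomial in $V_d$ with $|\mathbf{s}|$ operators, replace every $P_i$ by a single operator $P$ and record the sequence of labels $i$ read left to right in the bracketed word. This is a bijection between the monomials counted by $a_d(r;\mathbf{s})$ and pairs consisting of a one-operator monomial with $r$ stars and $|\mathbf{s}|$ operators, together with a word containing $s_i$ copies of the letter $i$. Such a labeling is arbitrary, since the operators are independent and noncommuting, so the count is again the multinomial coefficient times $N(r+|\mathbf{s}|,|\mathbf{s}|)$. The example $a_2(2;(2,1))=3\cdot N(5,3)=3\cdot 10=30$ matches Figure~\ref{fig:a221boxes}.

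The only point needing care is justifying that the identity $A_d=A_1\circ|\mathbf{p}|$ holds at the level of formal power series, rather than merely for the closed-form expression with its square-root branch. I would handle this through the uniqueness of solutions of the recursive functional equation \eqref{eq:sec2masterfunc}. Equivalently, the bijective argument above bypasses the issue entirely.
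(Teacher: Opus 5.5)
Your proposal is correct. Your secondary, bijective argument is essentially the paper's own proof. The paper collapses $P_1,\ldots,P_d$ to a single operator $P$ and recovers the original monomial by recoloring the $|\mathbf{s}|$ occurrences of $P$. Your version makes that recoloring precise by indexing the occurrences via the left-to-right order of opening brackets.

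Your primary route is genuinely different. You argue at the level of generating functions: \eqref{eq:sec2masterfunc} involves $\mathbf{p}$ only through $|\mathbf{p}|$, and its formal power series solution with zero constant term is unique, because coefficients are determined recursively in total degree $r+|\mathbf{s}|$. Hence $A_d(u;\mathbf{p})=A_1(u;|\mathbf{p}|)$, and the multinomial theorem then does the coefficient extraction. This is sound. The substitution $q\mapsto|\mathbf{p}|$ is a well-defined ring homomorphism of formal power series rings, since $|\mathbf{p}|$ has no constant term, so $A_1(u;|\mathbf{p}|)$ satisfies the same equation. The argument also correctly avoids any branch question for the closed form \eqref{eq:sec2Aclosed}.

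What your route buys is a structural explanation. The multinomial factorization holds because the atom equation \eqref{eq:sec2atom} sees the operators only through $p_1+\cdots+p_d$. This also shows why no such formula is available in the commuting case of Section~\ref{sec3}, where $\Delta(\mathbf{p})=1-\prod_{i}(1-p_i)$ is not a function of $|\mathbf{p}|$ once $d\ge2$. The paper's bijective proof is shorter and does not need the functional equation at all. It explains the factor $\binom{|\mathbf{s}|}{s_1,\ldots,s_d}$ directly as a count of colorings.
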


\begin{proof}
Given a monomial in $V_d$, identify the $d$ operators $P_1,\ldots,P_d$ with a single operator $P$. This maps every monomial counted by $a_d(r;\mathbf{s})$ to a monomial counted by $a_1(r;|\mathbf{s}|)$. Conversely, given any such one-operator monomial, one recovers a monomial with
multiplicity vector $\mathbf{s}$ by arbitrarily recoloring the $|\mathbf{s}|$ occurrences of $P$ so that
exactly $s_i$ of them receive color $i$.
This can be done in
$\binom{|\mathbf{s}|}{s_1,\ldots,s_d}$
ways, which proves~\eqref{eq:sec2multigradedformula}.
\end{proof}

\subsection{Length-graded specializations}

We now consider a family of one-variable specializations determined by the assigned length of the indeterminate $\ast$. Let $\ell \geq 1$ be a fixed integer, and suppose we encode monomials in $V_d$ as bracketed words where each delimiter (left or right) has length 1, and the indeterminate symbol $\ast$ has length $\ell$. If a monomial has degree $r$ and multiplicity vector $\mathbf{s}$
then its total length is $n=\ell r+2|\mathbf{s}|$. Accordingly, for every $\ell\ge 1$ and $n\ge 1$, we define
\begin{equation}
\label{murray1}
b_{d,\ell}(n)
:=
\sum_{\substack{r\ge1,\;\mathbf{s}\ge\mathbf{0}\\ \ell r+2|\mathbf{s}|=n}}
a_d(r;\mathbf{s}),
\end{equation}
where $\mathbf{s}\ge\mathbf{0}$ is interpreted coordinate-wise.
Equivalently, if
\[
B_{d,\ell}(z):=\sum_{n\ge1} b_{d,\ell}(n)z^n,
\]
then
\begin{equation}
\label{murray2}
B_{d,\ell}(z)=A_d(z^\ell;z^2,\ldots,z^2).
\end{equation}
Recall that 
\[
N(n,k)=\frac{1}{n}\binom{n}{k}\binom{n}{k+1}
\]
denotes the (shifted) Narayana number. 
We show that, for all $\ell \geq 1$, the sequences $b_{d,\ell}(n)$ admit a compact description 
in terms of $N(n,k)$.

\begin{theorem}\label{thm:sec2bdlgeneral}
Let $\ell\ge 1$ be an integer.
Then the generating function $B_{d,\ell}(z)$ satisfies
\begin{equation}\label{eq:sec2bdlfunc}
B_{d,\ell}(z)
=
z^\ell+z^\ell B_{d,\ell}(z)+dz^2B_{d,\ell}(z)+dz^2B_{d,\ell}(z)^2.
\end{equation}
Equivalently,
\begin{equation}\label{eq:sec2bdlquad}
dz^2B_{d,\ell}(z)^2-(1-z^\ell-dz^2)B_{d,\ell}(z)+z^\ell=0.
\end{equation}
Moreover, for every $n\ge 1$,
\begin{equation}\label{eq:sec2bdlnarayanak}
b_{d,\ell}(n)
=
\sum_{\substack{0 \leq k \leq (n-\ell)/2 \\ n-2k\equiv 0\!\!\!\!\pmod{\ell}}}
d^k
N\!\left(\frac{n-2k}{\ell}+k,\,k\right).
\end{equation}
\end{theorem}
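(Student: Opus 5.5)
The plan is to obtain everything by specializing the multigraded results already established, namely \eqref{eq:sec2masterfunc} and Theorem~\ref{thm:sec2multigradedformula}, via the substitution \eqref{murray2}.

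\textbf{Functional equation.} By \eqref{murray2}, $B_{d,\ell}(z)=A_d(z^\ell;z^2,\ldots,z^2)$. Under this substitution $u\mapsto z^\ell$ and $|\mathbf{p}|\mapsto dz^2$. I will substitute into \eqref{eq:sec2masterfunc}. This is an identity of formal power series in $u,p_1,\ldots,p_d$, and the substitution is legitimate as a ring homomorphism into $\mathbb{Z}[[z]]$ because every monomial $u^rp^{\mathbf{s}}$ with $r\ge1$ maps to $z^{\ell r+2|\mathbf{s}|}$, a positive power. Only finitely many terms contribute to each power of $z$. The substitution gives \eqref{eq:sec2bdlfunc} immediately, and rearranging gives \eqref{eq:sec2bdlquad}.

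\textbf{Coefficient formula.} I will start from the definition \eqref{murray1} and group the terms by $k=|\mathbf{s}|$. For fixed $r$ and $k$, I sum \eqref{eq:sec2multigradedformula} over all $\mathbf{s}$ with $|\mathbf{s}|=k$. By the multinomial theorem, $\sum_{|\mathbf{s}|=k}\binom{k}{s_1,\ldots,s_d}=d^k$, so this sum equals $d^k N(r+k,k)$.

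The constraint $\ell r+2k=n$ with $r\ge1$ means $r=(n-2k)/\ell$. This forces $n-2k\equiv0 \pmod{\ell}$ and $n-2k\ge\ell$, that is, $0\le k\le (n-\ell)/2$. Substituting this value of $r$ yields exactly \eqref{eq:sec2bdlnarayanak}.

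\textbf{Main obstacle.} The only real care needed is in the bookkeeping of the summation range. The index $r\ge1$ must translate correctly to the upper bound on $k$, and the Narayana argument must stay in the valid range $0\le k\le (r+k)-1$, which holds since $r\ge1$. I also need to confirm the formal-series substitution is well defined, which I addressed above.
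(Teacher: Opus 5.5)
Your proposal is correct and follows the paper's proof: specialize the functional equation \eqref{eq:sec2masterfunc} via $u\mapsto z^\ell$, $p_i\mapsto z^2$, then group \eqref{murray1} by $k=|\mathbf{s}|$, collapse the multinomial coefficients to $d^k$, and substitute $r=(n-2k)/\ell$. Your added checks that the formal substitution is well defined and that the range $0\le k\le (n-\ell)/2$ corresponds to $r\ge1$ are correct and only make explicit what the paper leaves implicit.
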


\begin{proof}
The generating function identities follow immediately from the specialization \eqref{murray2}
together with the functional equation~\eqref{eq:sec2masterfunc} for $A_d$.

To obtain the coefficient formula, observe that by definition \eqref{murray1} we have
\[
b_{d,\ell}(n)
=
\sum_{\substack{r\ge1,\;k\ge0\\ \ell r+2k=n}}
\ \sum_{|\mathbf{s}|=k}
a_d(r;\mathbf{s}).
\]
Summing the multinomial coefficients over all $\mathbf{s}$ with
$|\mathbf{s}|=k$, we obtain
\[
\sum_{|\mathbf{s}|=k}\binom{k}{s_1,\ldots,s_d}=d^k.
\]
Using Theorem~\ref{thm:sec2multigradedformula}, this gives
\begin{equation}\label{eq:sec2bdlnarayanark}
b_{d,\ell}(n)
=
\sum_{\substack{r\ge1,\;k\ge0\\ \ell r+2k=n}}
d^k\,N(r+k,k).
\end{equation}
Finally, substituting $r=(n-2k)/\ell$
into~\eqref{eq:sec2bdlnarayanark} yields~\eqref{eq:sec2bdlnarayanak}.
\end{proof}

Theorem~\ref{thm:sec2bdlgeneral} shows that, in the noncommuting-unary case, the dependence on the $d$ unary operators enters only through the weight $d^k$ assigned to the total number of unary-operator applications. Equivalently, the multigraded count is a multinomial refinement of the one-operator Narayana formula. Furthermore, observe that when $\ell$ is even, $b_{d,\ell}(n) = 0$ for all odd $n$. 

The cases $\ell=1$ and $\ell=2$ are particularly noteworthy for the combinatorial interpretations below:

\begin{corollary}\label{cor:sec2bdlspecial}
For every $n \geq 1$, we have
\begin{align}
\label{eq:sec2bd1narayana}
b_{d,1}(n)
&=
\sum_{k=0}^{\lfloor (n-1)/2\rfloor}
d^k\,N(n-k,k), \\
\label{eq:sec2bd2narayana}
b_{d,2}(2n) &=
\sum_{k=0}^{n-1} d^k\,N(n,k).
\end{align}
\end{corollary}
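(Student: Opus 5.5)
The plan is to specialize formula~\eqref{eq:sec2bdlnarayanak} of Theorem~\ref{thm:sec2bdlgeneral}, or equivalently~\eqref{eq:sec2bdlnarayanark}, to $\ell=1$ and $\ell=2$. In each case the only work is to solve the constraint $\ell r+2k=n$ for $r$, then rewrite the admissible range of $k$.

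For $\ell=1$, the congruence condition $n-2k\equiv 0 \pmod 1$ is vacuous, and $r=n-2k$. The requirement $r\ge 1$ becomes $k\le (n-1)/2$, so $0\le k\le\lfloor (n-1)/2\rfloor$. Substituting gives $N(r+k,k)=N(n-2k+k,k)=N(n-k,k)$, which is~\eqref{eq:sec2bd1narayana}. As a sanity check, the condition $0\le k\le (n-k)-1$ needed for $N(n-k,k)$ to be in its defining range is equivalent to $2k\le n-1$, so it matches the summation range exactly.

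For $\ell=2$, I will work with the total length $2n$ in place of $n$. The constraint is then $2r+2k=2n$, i.e.\ $r=n-k$, and the parity congruence is automatically satisfied. The condition $r\ge1$ gives $k\le n-1$, and $N(r+k,k)=N(n,k)$, which yields~\eqref{eq:sec2bd2narayana}.

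I do not expect a real obstacle. The only point needing care is the bookkeeping of the summation bounds, namely that the floor in the $\ell=1$ case and the upper limit $n-1$ in the $\ell=2$ case are exactly the translations of $r\ge1$. It is also worth remarking that odd lengths are excluded when $\ell=2$, consistent with the observation after Theorem~\ref{thm:sec2bdlgeneral}.
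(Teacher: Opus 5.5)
Your proposal is correct and follows the paper's own proof: specialize \eqref{eq:sec2bdlnarayanak} to $\ell=1$ (so $r=n-2k$ and $r+k=n-k$), and for $\ell=2$ apply \eqref{eq:sec2bdlnarayanark} with total length $2n$ (so $r+k=n$). Your checks that the summation bounds are exactly the translation of $r\ge 1$ are sound, and they add a little detail the paper leaves implicit.
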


\begin{proof}
For $\ell=1$, equation~\eqref{eq:sec2bdlnarayanak} reduces immediately to
\eqref{eq:sec2bd1narayana}, since then
$r=n-2k$ and $r+k=n-k$.
For $\ell=2$, write the total length as $2n$.
Then $2r+2k=2n \iff r+k=n$
and~\eqref{eq:sec2bdlnarayanark} becomes exactly
\eqref{eq:sec2bd2narayana}.
\end{proof}

\begin{table}[ht]
\centering
\begin{tabular}{c|l|l}
$d$ & First terms of $b_{d,1}(n)$ & OEIS entry \\
\hline
\\[-12pt]
1 & $1, 1, 2, 4, 8, 17, 37, 82, 185, 423, 978, 2283, 5373, 12735, \ldots$ & \seqnum{A004148} \\
2 & $1,1,3,7,17,45,119,323,893,2497,7067,20191,58153,\ldots$ & \seqnum{A394939}\textsuperscript{*} \\
3 & $1,1,4,10,28,85,253,784,2461,7813,25138,81571,\ldots$ & \seqnum{A394940}\textsuperscript{*} \\
4 & $1,1,5,13,41,137,445,1525,5249,18321,64821,231069,\ldots$ & \seqnum{A394941}\textsuperscript{*}
\end{tabular}
\caption{First terms of the sequences $b_{d,1}(n)$ for $d=1,2,3,4$.}
\label{tab:bd1}
\end{table}

Table~\ref{tab:bd1} records the first few terms of $b_{d,1}(n)$. The case $d=1$ agrees with \seqnum{A004148}. Sequences marked with an asterisk were submitted to the OEIS as part of this project.

\begin{table}[ht]
\centering
\begin{tabular}{c|l|l}
$d$ & First terms of $b_{d,2}(2n)$ & OEIS entry \\
\hline
\\[-12pt]
1 & $1,2,5,14,42,132,429,1430,4862,16796,58786,208012,742900,\ldots$ & \seqnum{A000108} \\
2 & $1,3,11,45,197,903,4279,20793,103049,518859,2646723,\ldots$ & \seqnum{A001003} \\
3 & $1,4,19,100,562,3304,20071,124996,793774,5120632,33463102,\ldots$ & \seqnum{A007564} \\
4 & $1, 5, 29, 185, 1257, 8925, 65445, 491825, 3768209, 29324405,\ldots$ & \seqnum{A059231}
\end{tabular}
\caption{First terms of the sequences $b_{d,2}(2n)$ for $d=1,2,3,4$.}
\label{tab:bd2}
\end{table}

Table~\ref{tab:bd2} shows that the family $b_{d,2}(2n)$ begins with
two classical sequences:
$d=1$ gives the Catalan numbers \seqnum{A000108}, 
and $d=2$ gives the small Schr\"oder numbers \seqnum{A001003}.

As for $\ell \geq 3$, the initial terms of $b_{1,3}(n)$ are
\[
0,0,1,0,1,1,1,3,2,6,7,11,21,25,52,71,121,204,302,547,828,\ldots
\]
which is \seqnum{A329691} with the first two terms removed. Interestingly, $b_{1,4}(2n)$ begins with 
\[
0,1,1,2,4,8,17,37,82,185,423, 978, 2283, 5373, 12735, 30372, \ldots,
\]
which agrees with $b_{1,1}(n)$ after the initial zero term. In fact, one can apply \eqref{eq:sec2bdlfunc} and show that
$B_{1,4}(z^2) = z^2 B_{1,1}(z)$,
which implies that
$b_{1,4}(2n+2) = b_{1,1}(n)$
for every $n \geq 1$. Other $b_{d,\ell}(n)$ sequences cataloged in the OEIS at the time of this writing include 
$b_{2,4}(2n)$ (\seqnum{A159771}), 
$b_{4,4}(2n)$ (\seqnum{A239204}), 
$b_{1,6}(2n)$ (\seqnum{A023432}), 
$b_{1,8}(2n)$ (\seqnum{A023427}), 
and
$b_{1,10}(2n)$ (\seqnum{A212364}).

\subsection{Combinatorial interpretations}

We now describe several combinatorial models for the noncommuting unary case.
These models explain both the multigraded counts $a_d(r;\mathbf{s})$ and the
length-graded sequences $b_{d,\ell}(n)$.
The tree model most directly reflects the recursive construction of operator monomials,
while the path model explains the length gradings.
In the special case $\ell=2$, the same objects also admit a particularly natural
interpretation in terms of rooted ordered binary trees.

\subsubsection{Rooted ordered trees}

Every monomial in $V_d$ may be represented by a rooted ordered tree as
follows:
A leaf represents the indeterminate $\ast$, a unary internal node labeled $i\in[d]$ represents the application of $P_i$, and a product $v_1 v_2 \cdots v_m$ ($m \ge 2$)
is represented by an unlabeled internal node with $m$ ordered children corresponding to
the associative product of $v_1,\ldots,v_m$.
Because the multiplication is associative but not commutative, the left-to-right order
of the children matters, so one obtains a rooted ordered tree.

\begin{proposition}\label{prop:sec23trees}
For integers $r\ge1$ and $\mathbf{s}=(s_1,\ldots,s_d)\in\mathbb{Z}_{\ge0}^d$,
the number $a_d(r;\mathbf{s})$ counts rooted ordered trees with exactly $r$ leaves and
exactly $s_i$ unary internal nodes labeled $i$, for each $1\le i\le d$, in which every
non-unary internal node has at least two children.
\end{proposition}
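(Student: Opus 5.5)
The plan is to construct the tree map described just before Proposition~\ref{prop:sec23trees} explicitly, and then compare generating functions: the generating function of the tree class should satisfy the same system \eqref{eq:sec2seq}--\eqref{eq:sec2atom} as $A_d$. Define $\tau\colon V_d\to\{\text{trees}\}$ recursively. Set $\tau(\ast)$ to be a single leaf, and $\tau(P_i(v))$ to be a unary node labeled $i$ above $\tau(v)$. For a non-atom $v$, write $v=v_1\cdots v_m$, $m\ge2$, with the $v_j$ the \emph{atoms} in its unique atom factorization. Then $\tau(v)$ is an unlabeled node with ordered children $\tau(v_1),\ldots,\tau(v_m)$. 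Uniqueness of the atom factorization, used already in \eqref{eq:sec2seq}, makes $\tau$ well defined. By induction, $\tau$ sends $\ast$-occurrences to leaves and $P_i$-occurrences to unary nodes labeled $i$. It is injective, since the monomial is read back by concatenating children and applying labels.

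Next, count the tree class as the statement words it, with product nodes allowed to have product nodes as children. Its generating function $T$ satisfies
\[
T=u+|\mathbf{p}|\,T+\sum_{m\ge2}T^m=u+|\mathbf{p}|\,T+\frac{T^2}{1-T}.
\]
I would then compare this with \eqref{eq:sec2masterquad}.

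The main obstacle is this comparison, and it is where I expect the plan to break. Put $\mathbf{p}=0$ and $r=3$. The equation for $T$ gives coefficient $3$: the root with three leaf children, plus two trees whose root has two children, one of them a binary product node. But $A_d(u;\mathbf{0})=u/(1-u)$ gives $1$, consistent with \eqref{eq:sec2multigradedformula}, $N(3,0)=1$. Associativity identifies $(\ast\ast)\ast$, $\ast(\ast\ast)$ and $\ast\ast\ast$, so two product nodes stacked directly encode the same monomial as one flattened node. So $\tau$ is injective but not surjective onto the class as literally described, and the literal equality of counts fails already at $r=3$, $\mathbf{s}=\mathbf 0$.

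What the argument actually proves is the equality with the image of $\tau$. By construction, the image consists exactly of those trees in which no child of an unlabeled node is itself unlabeled. For such trees, an unlabeled node's children are leaves or labeled unary nodes, which correspond to atoms. Writing $\overline T$ for the generating function of these "atom" trees, the class satisfies $T=\overline T/(1-\overline T)$ and $\overline T=u+|\mathbf{p}|T$. These are exactly \eqref{eq:sec2seq} and \eqref{eq:sec2atom}, so $T=A_d$, and the coefficient of $u^r\mathbf{p}^{\mathbf{s}}$ is $a_d(r;\mathbf{s})$. I would therefore write the proof as this bijection, and state explicitly that the condition ``every non-unary internal node has at least two children'' must be read together with the flattening convention implicit in representing an associative product $v_1\cdots v_m$ by a single node. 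Without that convention the count in Proposition~\ref{prop:sec23trees} is not $a_d(r;\mathbf{s})$.
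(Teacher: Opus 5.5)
Your proposal is correct, and your objection to the statement as literally worded is valid. For $r=3$ and $\mathbf{s}=\mathbf{0}$, the described class contains three trees: the root with three leaf children, and the two trees whose binary root has a binary child. But $a_d(3;\mathbf{0})=N(3,0)=1$, because associativity leaves only $\ast\ast\ast$. More generally, at $\mathbf{p}=\mathbf{0}$ the literal class is counted by the small Schr\"oder numbers $1,1,3,11,\ldots$, not by the coefficients of $u/(1-u)$.

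The paper's proof uses exactly your correspondence, but it only asserts that the correspondence is ``recursive and invertible.'' On the literal class, the reverse reading (leaf $\mapsto\ast$, unary node $\mapsto P_i$, product node $\mapsto$ concatenation) is not injective. A product node with a product-node child gives the same monomial as the tree in which the two nodes are merged.

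So the paper's argument is valid only after adding the condition you isolate: every child of a non-unary internal node is a leaf or a unary node. This amounts to taking the children of a product node to be the factors of the unique atom factorization. It is evidently the intended convention, and it is consistent with the paper's later remark that atoms correspond to trees whose root is a leaf or a unary node.

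Your definition of $\tau$ through the atom factorization, together with your identification of its image, supplies exactly this missing ingredient. The generating-function comparison with \eqref{eq:sec2seq}--\eqref{eq:sec2atom} is correct but redundant, since the bijection already preserves the degree and the multiplicity vector.
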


\begin{proof}
The correspondence described above is recursive and invertible.
Each occurrence of the indeterminate $\ast$ contributes one leaf, and each occurrence of
the operator $P_i$ contributes one unary internal node labeled $i$.
Conversely, every such rooted ordered tree determines a unique monomial in $V_d$ by reading the tree from the leaves upward.
\end{proof}

Under this correspondence, the atoms are exactly the trees consisting either of a single
leaf or of a unary root whose unique child is an arbitrary tree.
Thus, the rooted ordered tree model gives a direct combinatorial realization of the defining
recursion for $V_d$.

\subsubsection{Restricted Motzkin, Sch\"{o}der, and Dyck paths}

We next describe a path model that simultaneously explains the length-graded families
$b_{d,\ell}(n)$ for all $\ell\ge1$.
Fix $\ell\ge1$.
To a bracketed-word realization of a monomial, we associate a lattice path by the
following rules:
\begin{itemize}
\item each opening bracket of type $i$ is mapped to an up-step $U_i=(1,1)$ labeled by $i$;
\item each closing bracket is mapped to a down-step $D=(1,-1)$;
\item each occurrence of the indeterminate $\ast$ is mapped to a horizontal step $H_\ell=(\ell,0)$.
\end{itemize}

Since every opening bracket must eventually be matched by a closing bracket, and since
properly bracketed words never contain more closing brackets than opening brackets in any
prefix, the resulting path remains on or above the $x$-axis and returns to the $x$-axis
at the end.
Moreover, the pattern $UD$ never occurs: an up-step corresponds to an opening bracket,
and the interior of the corresponding bracket pair must contain at least one symbol, so an
immediate down-step is impossible.

\begin{proposition}\label{prop:sec23pathsgeneral}
Let $\ell\ge1$ and $n\ge1$.
Then $b_{d,\ell}(n)$ counts lattice paths of total horizontal length $n$ that:
\begin{itemize}
\item start at $(0,0)$ and end on the $x$-axis;
\item use only steps $U_i=(1,1)$ for $1\le i\le d$, $D=(1,-1)$, and
$H_\ell=(\ell,0)$;
\item remain on or above the $x$-axis;
\item avoid peaks, that is, do not contain an instance of $UD$.
\end{itemize}
\end{proposition}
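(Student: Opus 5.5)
The plan is to show that the map from bracketed words to lattice paths described just before the statement is a bijection. It goes from monomials of degree $r$ and multiplicity vector $\mathbf{s}$ onto the restricted paths with exactly $r$ steps $H_\ell$, $s_i$ steps $U_i$, and $|\mathbf{s}|$ steps $D$. Such a path has horizontal length $\ell r+2|\mathbf{s}|$. Summing over all $(r,\mathbf{s})$ with $\ell r+2|\mathbf{s}|=n$, as in the definition \eqref{murray1} of $b_{d,\ell}(n)$, then gives the claim.

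The forward direction is already essentially verified in the text. The encoding uses matched brackets, so the path stays weakly above the axis and ends on it. Every pair $P_i(\cdot)$ encloses a nonempty monomial, so no $UD$ occurs. The step counts match the degree and multiplicities. Injectivity is immediate, because the bracketed word can be read back from the step sequence: $U_i$ gives an opening bracket of type $i$, $D$ gives a closing bracket whose type is forced by the matching, and $H_\ell$ gives $\ast$. Since each monomial has a unique bracketed word, distinct monomials give distinct paths.

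The main work is surjectivity. I would argue by strong induction on the number of steps, using the decomposition behind \eqref{eq:sec2seq} and \eqref{eq:sec2atom}. Take a restricted path $\pi$ with at least one step and cut it at its returns to the $x$-axis, giving $\pi=\pi_1\pi_2\cdots\pi_m$ with each $\pi_j$ a nonempty primitive piece. Each piece is either a single $H_\ell$, which is the image of the atom $\ast$, or has the form $U_i\,\sigma\,D$, where $\sigma$ is a path that stays weakly above its starting level and returns to it. By the no-peak condition, $\sigma$ is nonempty. After shifting, $\sigma$ is again a nonempty restricted path: it still avoids $UD$ and stays on or above the axis. By induction $\sigma$ is the image of some monomial $v\in V_d$, so $U_i\sigma D$ is the image of the atom $P_i(v)$. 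Concatenating the pieces then gives the image of the product of these atoms.

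The one delicate point is checking that the no-$UD$ condition passes to the inner path $\sigma$ and to the concatenation. Removing the outer $U_i$ and $D$ cannot create a peak. Junctions between pieces are of the form $D$ followed by $U$, $D$ followed by $H_\ell$, $H_\ell$ followed by $U$, or $H_\ell$ followed by $H_\ell$, so no $UD$ appears there. The only place a peak could hide is an empty $\sigma$, and that is exactly what the hypothesis rules out. Given this, the map is a bijection, and counting by length completes the proof.
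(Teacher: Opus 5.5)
Your proposal is correct and follows the same route as the paper: the bracketed-word-to-path encoding is a bijection that preserves degree and multiplicities, and the no-$UD$ condition corresponds exactly to every operator having a nonempty argument. The paper asserts bijectivity and checks only the forward properties, while you also prove surjectivity by induction via the first-return decomposition mirroring \eqref{eq:sec2seq} and \eqref{eq:sec2atom}, which fills in the step the paper leaves implicit.
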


\begin{proof}
The bracketed-word encoding gives a bijection between monomials counted by
$b_{d,\ell}(n)$ and paths of the stated type.
The total length of the word is exactly the total horizontal length of the path.
The prefix condition for well-formed bracketed words guarantees that the path never falls
below the $x$-axis, and the fact that every opening bracket must be matched implies that the
path ends on the $x$-axis.
Finally, an instance of $UD$ would correspond to an opening bracket immediately followed by
its matching closing bracket, which is impossible because the interior of a unary operator
must contain at least one symbol.
\end{proof}

The two most noteworthy cases are as follows.

\begin{corollary}\label{cor:sec23pathsspecial}
For every $n \geq 1$,
\begin{enumerate}
\item[\textup{(i)}]
$b_{d,1}(n)$ counts peakless Motzkin paths of length $n$ with $d$ types of up-steps.
\item[\textup{(ii)}]
$b_{d,2}(2n)$ counts peakless Schr\"oder paths of semilength $n$ with $d$ types of
up-steps.
\end{enumerate}
\end{corollary}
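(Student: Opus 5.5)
This is a direct specialization of Proposition~\ref{prop:sec23pathsgeneral} to $\ell=1$ and $\ell=2$. The plan is to check that the path families in that proposition agree, in each case, with the standard definitions of peakless Motzkin and Schr\"oder paths carrying $d$ colors on up-steps, and to check that the length parameters match.

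For part (i), set $\ell=1$ in Proposition~\ref{prop:sec23pathsgeneral}. The allowed steps become $U_i=(1,1)$ for $i\in[d]$, $D=(1,-1)$, and $H_1=(1,0)$. Each step then has horizontal length $1$, so total horizontal length $n$ is the same as having $n$ steps. Paths from $(0,0)$ that end on the $x$-axis, never go below it, and use these steps are exactly Motzkin paths of length $n$ whose up-steps come in $d$ types. The condition that $UD$ never occurs is precisely the peakless condition. So $b_{d,1}(n)$ counts the stated objects.

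For part (ii), set $\ell=2$ and take total length $2n$. The steps are now $U_i$, $D$, and the long horizontal step $H_2=(2,0)$, so the paths are Schr\"oder paths from $(0,0)$ to $(2n,0)$, i.e.\ of semilength $n$, again with $d$ types of up-step. The no-$UD$ condition again means peakless. A useful check is the case $d=1$. Here Corollary~\ref{cor:sec2bdlspecial} gives the Catalan numbers, since $\sum_k N(n,k)=C_n$. This agrees with the classical bijection between peakless Schr\"oder paths of semilength $n$ and Dyck paths of semilength $n$ in which each $H_2$ is replaced by a peak. That bijection is exactly the $\ast\mapsto UD$ map from the proof of \eqref{eq:sec2multigradedd=1}.

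There is no real obstacle. The only point needing care is the conventions. Length means horizontal extent, not number of steps, which matters when $\ell=2$. Also, in (ii) we only consider even total length $2n$, and this is harmless because $b_{d,2}$ vanishes at odd arguments, as noted after Theorem~\ref{thm:sec2bdlgeneral}.
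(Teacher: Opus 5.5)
Your proposal is correct and matches the paper's proof: both specialize Proposition~\ref{prop:sec23pathsgeneral} to $\ell=1$, where $H_1=(1,0)$ gives colored peakless Motzkin paths of length $n$, and to $\ell=2$, where $H_2=(2,0)$ gives colored peakless Schr\"oder paths with total length $2n$, i.e.\ semilength $n$. Your $d=1$ Catalan check, which replaces each $H_2$ by a peak $UD$, is also correct, though the argument does not need it.
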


\begin{proof}
If $\ell=1$, then the horizontal step is $H_1=(1,0)$, so the paths are precisely Motzkin
paths of length $n$ with colored up-steps and no peaks.
If $\ell=2$, then the horizontal step is $H_2=(2,0)$, so the paths are Schr\"oder paths.
Since the total horizontal length is $2n$, the natural size parameter is the semilength
$n$.
\end{proof}

For instance, we saw in the preceding section that $b_{2,2}(2n)$ gives the small Schr\"{o}der numbers (\seqnum{A001003}). Thus, Corollary~\ref{cor:sec23pathsspecial} gives a combinatorial interpretation of this classic sequence in terms of peakless, bi-colored Schr\"{o}der paths.

Next, we show that the one-operator case admits a further Dyck-path interpretation in the even-length specialization, as pointed out in several OEIS entries (such as \seqnum{A023427} and \seqnum{A212364}).

\begin{proposition}\label{prop:sec23dyckmodl}
For every integer $\ell \ge 1$ and every $n \ge \ell$, the number $b_{1,2\ell}(2n)$
counts Dyck paths of semilength $n-\ell+1$ in which every ascent and every descent has
length congruent to $1 \pmod{\ell}$.
\end{proposition}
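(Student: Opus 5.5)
The plan is to compare generating functions. I will show that the generating function of the Dyck paths described in the statement satisfies the same quadratic equation as $B_{1,2\ell}$, after a change of variable and a shift.

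\textbf{Step 1: rewrite the equation for $B_{1,2\ell}$.} By \eqref{eq:sec2bdlquad} with $d=1$ and $\ell$ replaced by $2\ell$, we have
\[
z^2B^2-(1-z^{2\ell}-z^2)B+z^{2\ell}=0, \qquad B=B_{1,2\ell}(z).
\]
Only even powers of $z$ occur, so set $y=z^2$ and $B_{1,2\ell}(z)=G(y)$. Then
\[
yG^2-(1-y^\ell-y)G+y^\ell=0.
\]
The target semilength $n-\ell+1$ corresponds to length $2n$, so I substitute $G=y^{\ell-1}F$. This requires $G$ to be divisible by $y^{\ell-1}$, which holds because $b_{1,2\ell}(2n)=0$ for $n<\ell$ (every monomial contains some $\ast$). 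Dividing by $y^{2\ell-1}$ gives
\[
y^\ell F^2-(1-y-y^\ell)F+y=0,
\]
which rearranges to the factored form
\[
F=(y+y^\ell F)(1+F).
\]
This equation together with $F(0)=0$ determines the power series $F$ uniquely, since its coefficients can be computed recursively. It therefore suffices to show the following: if $F(y)=\sum_m f_m y^m$, where $f_m$ counts nonempty Dyck paths of semilength $m$ with all ascent and descent lengths $\equiv1\pmod\ell$, then this $F$ satisfies the factored equation.

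\textbf{Step 2: the key structural observation.} Take such a path. Each ascent changes the height by $\equiv 1\pmod \ell$ and each descent by $\equiv -1\pmod\ell$. Ascents and descents alternate and the path starts at height $0$, so every valley lies at a height $\equiv0\pmod\ell$.

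\textbf{Step 3: combinatorial decomposition.} First decompose by first return to the axis. Every admissible path is a prime admissible path (one touching the axis only at its endpoints), optionally followed by another admissible path. Concatenating at the axis creates a valley at height $0$ and does not merge any runs, so run lengths are unchanged. This accounts for the factor $(1+F)$.

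Next I claim that the generating function of prime admissible paths is $y+y^\ell F$.
\begin{itemize}
\item A prime path with exactly one peak is $U^aD^a$ with $a\equiv1\pmod\ell$.
\item A prime path with at least one valley has all its valleys at positive heights $\equiv0\pmod\ell$, hence at height $\ge\ell$ by Step 2. Its first ascent and last descent therefore have length $\ge\ell+1$.
\end{itemize}
In either case the path never goes below height $\ell$ between its first $\ell$ steps and its last $\ell$ steps, with the single exception of $UD$. So every prime admissible path other than $UD$ can be written uniquely as $U^\ell P D^\ell$, where $P$ is a nonempty Dyck path.

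Conversely, for any nonempty Dyck path $P$, the path $U^\ell PD^\ell$ has the same runs as $P$, except that the first ascent and the last descent are lengthened by $\ell$. Hence $U^\ell PD^\ell$ is admissible if and only if $P$ is admissible. This map raises semilength by $\ell$, which gives the term $y^\ell F$, while $UD$ gives the term $y$.

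\textbf{Step 4: conclude.} Combining Steps 1 and 3 gives $B_{1,2\ell}(z)=z^{2\ell-2}F(z^2)$. Comparing coefficients of $z^{2n}$ yields $b_{1,2\ell}(2n)=f_{n-\ell+1}$ for all $n\ge\ell$. As a sanity check, for $\ell=1$ the congruence condition is vacuous and we recover the Catalan case of Table~\ref{tab:bd2}.

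The main obstacle is justifying the unique factorization $U^\ell P D^\ell$ in Step 3. This depends entirely on the valley-height congruence of Step 2. The point to check carefully is that prime paths really cannot dip below level $\ell$ in their interior.
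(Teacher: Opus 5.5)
Your proof is correct. Its algebraic half is the paper's argument in a different form. The substitution $G=y^{\ell-1}F$ is exactly the symmetry $A_1(u;p)=\frac{u}{p}A_1(p;u)$ that the paper uses: your $F$ is $A_1(y;y^\ell)$, and your factored equation $F=(y+y^\ell F)(1+F)$ is \eqref{eq:sec2masterfunc} with $d=1$, $u=y$, $p=y^\ell$. There is one small slip: to pass from the equation in $G$ to the equation in $F$ you divide by $y^{\ell-1}$, not by $y^{2\ell-1}$; the displayed result is right.

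The real difference is in the combinatorial half. The paper interprets $A_1(x;x^\ell)$ by a global bijection on bracketed words: opening bracket $\mapsto U^\ell$, $\ast\mapsto UD$, closing bracket $\mapsto D^\ell$. It handles the converse by splitting each ascent as $(U^\ell)^aU$ and each descent as $D(D^\ell)^b$. You instead show that the generating function of admissible Dyck paths satisfies the same functional equation, using first-return decomposition and the lifting $P\mapsto U^\ell PD^\ell$, and then invoke uniqueness of the power-series solution. The step you flagged does go through. On the middle segment, the minimum height is attained either at its endpoints (height $\ell$) or at a valley (height $\ge\ell$ by your congruence). Your recursion is the paper's bijection read one atom at a time: prime paths correspond to atoms, with $UD\leftrightarrow\ast$ and $U^\ell PD^\ell\leftrightarrow P_1(\cdot)$. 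Unrolling it therefore recovers the same map.

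The paper's route exhibits the correspondence directly. Yours is a self-contained verification in which the valley-height lemma replaces the separate check that the block decomposition yields a well-formed bracketed word with no empty brackets. The cost is that it only identifies generating functions rather than writing the map down.
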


\begin{proof}
By~\eqref{murray2},
\[
B_{1,2\ell}(z)=A_1(z^{2\ell};z^2).
\]
On the other hand, Theorem~\ref{thm:sec2multigradedgf} gives
\[
A_1(u;p)
=
\frac{1-u-p-\sqrt{(1-u-p)^2-4up}}{2p},
\]
from which one immediately checks that
\[
A_1(u;p)=\frac{u}{p}\,A_1(p;u).
\]
Substituting $u = z^{2\ell}$ and $p = z^2$ gives
\[
A_1(z^{2\ell};z^2)=z^{2\ell-2}A_1(z^2;z^{2\ell}),
\]
and therefore
\[
b_{1,2\ell}(2n)
=
[z^{2n}]B_{1,2\ell}(z)
=
[z^{2n-2\ell+2}]A_1(z^2;z^{2\ell}).
\]
Setting $x=z^2$, this becomes
\[
b_{1,2\ell}(2n)=[x^{\,n-\ell+1}]A_1(x;x^\ell).
\]

It remains to interpret the coefficients of $A_1(x;x^\ell)$.
Given a bracketed word representing a one-operator monomial, replace each opening
bracket by $U^\ell$, each occurrence of $\ast$ by $UD$, and each closing bracket by
$D^\ell$. If the monomial has degree $r$ and unary multiplicity $k$, then the resulting
Dyck path has semilength $r+\ell k$, so this construction is enumerated by
$A_1(x;x^\ell)$.

Moreover, every maximal ascent in the image consists of some number of blocks $U^\ell$
followed by the single up-step coming from an occurrence of $\ast$, and hence has length
of the form $\ell a+1$. Similarly, every maximal descent has length of the form
$\ell b+1$. Thus every ascent and every descent has length congruent to $1 \pmod{\ell}$.

Conversely, let $P$ be a Dyck path in which every ascent and every descent has length
congruent to $1 \pmod{\ell}$. Then $P$ can be uniquely decomposed into blocks of $U^{\ell}$, $D^{\ell}$, and $UD$, and we can reverse the mapping to obtain a well-formed bracketed word. 

Therefore $[x^m]A_1(x;x^\ell)$ counts Dyck paths of semilength $m$ in which every ascent
and every descent has length congruent to $1 \pmod{\ell}$. Taking
$m=n-\ell+1$ proves the result.
\end{proof}

\subsubsection{Binary trees with labeled right edges}

Consider rooted ordered binary trees in which each left edge is unlabeled and each right edge is
labeled by an element of $[d]$. There is a natural recursive correspondence between such trees with $n$ vertices and the objects counted by $b_{d,2}(2n)$.

\begin{proposition}\label{prop:sec23binary}
For every $n\ge1$, $b_{d,2}(2n)$ counts rooted ordered binary trees with $n$ vertices
in which every right edge is assigned one of $d$ possible labels.
\end{proposition}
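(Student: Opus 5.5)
We will prove the statement through generating functions, and then sketch the bijection behind it. Let $T_d(x)=\sum_{n\ge1} t_d(n)x^n$, where $t_d(n)$ counts rooted ordered binary trees with $n$ vertices in which every right edge carries one of $d$ labels. Take a nonempty tree and split at its root. The left subtree is either empty or a nonempty tree attached by an unlabeled edge. The right subtree is either empty or a nonempty tree attached by one of $d$ labeled edges. This gives
\[
T_d(x)=x\bigl(1+T_d(x)\bigr)\bigl(1+dT_d(x)\bigr).
\]

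On the other side, put $z^2=x$ in~\eqref{eq:sec2bdlfunc} with $\ell=2$. Then $C(x):=B_{d,2}(\sqrt{x})=\sum_{n\ge1} b_{d,2}(2n)x^n$ satisfies
\[
C=x+xC+dxC+dxC^2=x(1+C)(1+dC).
\]
Both $C$ and $T_d$ are power series with zero constant term satisfying the same equation $F=x(1+F)(1+dF)$. Comparing coefficients of $x^n$ recursively determines such a series uniquely, since the coefficient of $x^n$ on the right depends only on coefficients of $F$ of lower order. Hence $C=T_d$, and so $b_{d,2}(2n)=t_d(n)$ for all $n\ge1$. As a check, $t_1(n)$ is the Catalan number, which matches Table~\ref{tab:bd2}.

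For a more transparent explanation, we would also describe the bijection implicit in the functional equation. The four terms $x$, $xC$, $dxC$, $dxC^2$ correspond to the four cases of~\eqref{eq:sec2masterfunc}: $\ast$, $\ast v$, $P_i(v)$, and $P_i(v)v'$. Equivalently, on the monomial side $\ast$ has weight $x$ and each $P_i$ has weight $x$, consistent with length $2n$. The recursive map sends a monomial of the form
\begin{itemize}
\item $\ast$ to a single vertex;
\item $\ast v$ to a root whose left subtree is the image of $v$;
\item $P_i(v)$ to a root whose right subtree is the image of $v$, attached by an edge labeled $i$;
\item $P_i(v)v'$ to a root with left subtree the image of $v'$ and right subtree the image of $v$, the right edge labeled $i$.
\end{itemize}
We still have to confirm that every monomial of degree $r$ and multiplicity $k$ decomposes uniquely into exactly one of these four forms. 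Splitting off the first atom does this: the first atom is either $\ast$ or $P_i(v)$, and the rest of the monomial is either empty or a nonempty monomial. The image then has $r+k$ vertices, which corresponds to length $2r+2k=2n$.

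The main obstacle is only the bookkeeping that the size parameters match. For the generating-function proof this is already handled by the specialization~\eqref{murray2} with $\ell=2$, so we expect no serious difficulty.
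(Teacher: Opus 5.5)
Your proof is correct, and its main line differs from the paper's. The paper argues purely bijectively: it sets $f(T)=f(T_L)\ast$ when the root has no right child and $f(T)=f(T_L)P_i(f(T_R))$ otherwise. Under this map the root encodes the \emph{last} atom and the left subtree encodes the remaining prefix, and the paper simply asserts that $f$ is invertible.

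Your primary argument instead identifies generating functions. The root decomposition gives $T_d=x(1+T_d)(1+dT_d)$, and specializing \eqref{eq:sec2bdlfunc} at $\ell=2$, $x=z^2$ gives the same equation for $C$. Uniqueness of the zero-constant-term formal power series solution then finishes the proof. This is short and fully rigorous given Theorem~\ref{thm:sec2bdlgeneral}, but it yields no explicit correspondence. Incidentally, your form $C=x(1+C)(1+dC)$ is the correct version of the identity the paper displays after its proof, where the prefactor $z$ should be $z^2$.

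Your supplementary bijection is the paper's idea reflected. You peel off the \emph{first} atom, so the root encodes the first atom and the left subtree encodes the suffix. You also check the unique four-way decomposition that makes the map invertible, which the paper leaves implicit.

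The bijective route gives more than the count. Under either map, vertices without a right child correspond to occurrences of $\ast$, and right edges labeled $i$ correspond to occurrences of $P_i$. So the bijection refines to the multigraded numbers $a_d(r;\mathbf{s})$. It also carries over directly to the monotone-chain condition of Proposition~\ref{prop:sec33binary}, since a lone atom $P_i(w)$ maps to a root with only a right child under both maps; the paper reuses the map verbatim there. The generating-function argument would instead need a separate functional equation for those restricted trees.
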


\begin{proof}
We define a recursive map $f$ from rooted ordered binary trees to monomials. First, $f$ maps the empty tree to the empty monomial. Next, given a nonempty tree $T$, if the root has no right child, and $T_L$ denotes its (possibly empty) left subtree, let
\[
f(T) =  f(T_L) \ast.
\]
If the root has a right child joined by an edge labeled $i$, and $T_L$, $T_R$ denote the
left and right subtrees respectively, let
\[
f(T) = f(T_L) P_i \bigl(f(T_R)\bigr).
\]
The construction is invertible, and hence defines a bijection. Moreover, if $T$ has $n$ vertices, then the bracketed-word realization of $f(T)$ (with $\ast$ assigned length 2) has length $2n$, and so our claim follows.
\end{proof}

\begin{figure}[htbp]
\centering
\def\sc{0.5}
\def\ysc{0.8}

\begin{tikzpicture}[xscale=\sc, yscale=\ysc,thick,main node/.style={circle, inner sep=0.5mm, draw, font=\small\sffamily},edge label/.style={font=\scriptsize,  inner sep=0pt}]
\node[main node,fill] at (2,3) (0) {};
\node[main node] at (1,2) (1) {};
\node[main node] at (3,2) (2) {};
\node[main node] at (0,1) (11) {};
\node[main node] at (2,1) (12) {};
\node[main node] at (4,1) (22) {};
\node[main node] at (3,0) (221) {};
\node[main node] at (5,0) (222) {};

\path
(0) edge (1)
(0) edge node[pos=0.5, edge label, above right] {$2$} (2)
(1) edge (11)
(1) edge node[pos=0.5, edge label, above right] {$2$} (12)
(2) edge node[pos=0.5, edge label, above right] {$1$} (22)
(22) edge (221)
(22) edge node[pos=0.5, edge label, above right] {$1$} (222);
\end{tikzpicture}
\caption{Illustrating the tree-to-monomial mapping in the proof of Proposition~\ref{prop:sec23binary}.}
\label{fig:treetomonomial}
\end{figure}
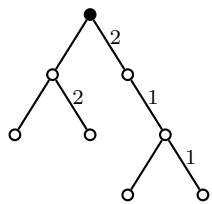

For an example, let $T$ be the tree as shown in Figure~\ref{fig:treetomonomial}, and $T_L$, $T_R$ respectively be the left and right subtrees of $T$. Then the left subtree $T_L$ has a right edge labeled $2$ and a left leaf, so
\[
f(T_L)=\ast P_2(\ast).
\]
Similarly, the root of the right subtree $T_R$ has no left child and a right edge labeled $1$,
while its right child again has no left child and a right edge labeled $1$; hence
\[
f(T_R)=P_1\bigl(\ast P_1(\ast)\bigr).
\]
Hence, we have
\[
f(T) =f(T_L) P_2 \bigl(f(T_R)\bigr)  =  \ast P_2(\ast)  P_2 \left( P_1\bigl(\ast P_1(\ast)\bigr) \right) .
\]
Since the tree $T$ has 8 vertices, the bracketed-word realization of $f(T)$ (with $\ast$ having length 2) has length 16, in agreement with Proposition~\ref{prop:sec23binary}. Also, Proposition~\ref{prop:sec23binary} explains why the generating function for $b_{d,2}(n)$ 
(see Theorem \ref{thm:sec2bdlgeneral}) satisfies the especially-simple equation
\[
B_{d,2}(z)=z(1+B_{d,2}(z))(1+d\,B_{d,2}(z)).
\]

\section{The commuting unary case}\label{sec3}

In this section we impose the additional relations
\begin{equation}\label{eq:sec3:eqrel}
P_i(P_j(v))\sim P_j(P_i(v))
\qquad (1\le i < j\le d,\; v\in V_d),
\end{equation}
so that the unary operators commute pairwise. 
(The associative binary operation remains noncommutative.)
This raises the natural question of enumerating the number of equivalence classes of monomials under the congruence
generated by these relations. To that end, we choose a canonical representative for each equivalence class. Thus, we define $V_d^c \subseteq V_d$ to be the smallest set such that
\begin{itemize} 
\item $\ast\in V_d^c$;
\item  if $v_1,v_2\in V_d^c$, then $v_1v_2 \in V_d^c$;
\item if $v\in V_d^c$ can be written as $v_1v_2$ for $v_1,v_2 \in V_d^c$, then $P_i(v)\in V_d^c$ for every $i \in [d]$;
\item if $v\in V_d^c$ has the form $P_j(v')$ for some $v' \in V_d^c$ and $j \in [d]$, 
then $P_i(v)\in V_d^c$ for every $i \in [j]$. In other words, subwords of the form $P_i(P_j(v'))$ are allowed if and only if $i \le j$.
\end{itemize} 

Then we have the following.

\begin{lemma}
Every equivalence class of monomials in $V_d$ under the relations
\[
P_i(P_j(v)) \sim P_j(P_i(v))
\qquad (1 \le i < j \le d,\; v \in V_d)
\]
contains a unique representative in $V_d^c$.
\end{lemma}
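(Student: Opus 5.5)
The plan is to describe the congruence explicitly through a structural invariant, and then read off existence and uniqueness of canonical representatives from it. Use the rooted ordered tree model of Proposition~\ref{prop:sec23trees}. Every monomial decomposes uniquely by recursion. At the top level it is a sequence of atoms. Each atom is either $\ast$ or a maximal unary chain $P_{i_1}(P_{i_2}(\cdots P_{i_m}(w)\cdots))$ with $m\ge1$, where $w$ is not itself of the form $P_j(\cdot)$; so $w$ is either $\ast$ or a product of at least two atoms. To any monomial $v$ assign a ``shape'' $\sigma(v)$, defined recursively in the same way except that the word $(i_1,\ldots,i_m)$ of each maximal chain is replaced by the multiset $\{i_1,\ldots,i_m\}$, and $w$ is replaced by $\sigma(w)$.

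First, show that $\sigma$ is constant on equivalence classes. The congruence is generated by replacing a subword $P_i(P_j(v))$ by $P_j(P_i(v))$ inside an arbitrary context. Such a replacement swaps two adjacent letters of a single maximal chain. It does not split or merge chains, and it does not change the chain's base $w$ or anything outside it. Hence the multiset, and therefore $\sigma$, is unchanged. Conversely, if $\sigma(v)=\sigma(v')$, prove $v\sim v'$ by induction on the size (degree plus total multiplicity). The two monomials have the same number of top-level atoms, with matching shapes. For corresponding chains, the bases $w,w'$ have equal shapes, so $w\sim w'$ by induction, and the congruence lets us replace $w$ by $w'$ inside the chain. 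The chain words are permutations of each other, and any permutation is a product of adjacent transpositions, each of which is an instance of the relation applied at some depth in the chain. Hence classes correspond exactly to shapes.

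Next, characterize $V_d^c$ as the set of monomials in which every maximal unary chain $P_{i_1}(\cdots P_{i_m}(w))$ has $i_1\le i_2\le\cdots\le i_m$. This follows by unwinding the defining clauses. The third clause allows the innermost operator of a chain to be arbitrary when $w$ is a product. The fourth clause allows prepending $P_i$ only when $i\le j$ for the current outermost index $j$. The second clause makes $V_d^c$ closed under products, so every sequence of canonical atoms lies in $V_d^c$. One gap needs handling: the base $w=\ast$ is not covered by the third clause as written, since $\ast$ is not a product. I would read the clause as intended to also permit $P_i(\ast)$. Otherwise single-operator chains on $\ast$ would have no representative at all, and the lemma would fail for, say, $P_1(\ast)$. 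I would note this interpretation explicitly.

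Finally, conclude. Existence: recursively sort each chain word into weakly increasing order. This is an equivalence by the previous step, and the resulting monomial lies in $V_d^c$. Uniqueness: two canonical monomials with the same shape have, at every chain, weakly increasing words with the same multiset, so the words are equal. An induction on the recursive structure then shows the monomials are identical. The main obstacle is making the ``maximal chain'' decomposition rigorous and showing that the relation acts only within one chain. The key point is that in $P_i(P_j(v))$ the two operators are always consecutive in the same chain, and that chain boundaries are preserved because $v$ itself is untouched.
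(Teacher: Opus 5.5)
Your proof is correct and follows the same route as the paper: sort each maximal unary chain into weakly increasing order for existence, and use the invariance of the multiset of labels on each maximal chain for uniqueness; your recursive ``shape'' invariant, and your converse showing that equal shapes imply equivalence, simply make that sketch rigorous. Your observation that the third defining clause of $V_d^c$ must be read as also permitting $P_i(\ast)$ is a fair catch about the literal definition, and this reading matches how the paper later uses $V_d^c$ (for example $a_d^c(1;\mathbf{s})=1$, and $\ast$ counting as a non-unary root).
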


\begin{proof}
Given any monomial in $V_d$, each maximal chain of nested unary operators may be reordered uniquely into weakly increasing order of indices, using the commutativity relations. Applying this procedure independently to every maximal unary chain produces an element of $V_d^c$. Uniqueness follows because the multiset of unary labels on each maximal unary chain is preserved under the relations, and weakly increasing order is unique.
\end{proof}

Thus, $V_d^c$ is the set of monomials in which each maximal chain of unary operators is 
arranged in weakly increasing order in terms of their indices. Observe that every equivalence class in $V_d$ under relation \eqref{eq:sec3:eqrel} contains exactly one element from $V_d^c$. For example,  in Figure~\ref{fig:a221boxes}, monomials in the same column within the same box are equivalent when $P_1$ and $P_2$ commute. Moreover, the top element in each column belongs to $V_2^c$. 

In this section, we again proceed in three steps: first by degree and operator multiplicity, then by length gradings, and finally through several combinatorial interpretations.

\subsection{Degree and multiplicity grading}

For $r\ge 1$ and $\mathbf{s}=(s_1,\ldots,s_d)\in\mathbb{Z}_{\ge0}^d$, let
\[
a_d^c(r;\mathbf{s})
\]
denote the number of elements in $V_d^c$ with degree $r$ and multiplicity vector $\mathbf{s}$. For example, one can infer from Figure~\ref{fig:a221boxes} (by counting total columns within boxes) that $a_2^c(2; (2,1)) = 18$.  We encode these numbers in the multivariate generating function
\[
A_d^c(u; \mathbf{p})
:=
\sum_{r\ge1}\;\sum_{s_1,\ldots,s_d\ge0}
a_d^c(r;\mathbf{s})\,u^r p_1^{s_1}\cdots p_d^{s_d}.
\]
To derive the functional equation for $A_d^c$, we next introduce two auxiliary classes. First, recall that an atom is a monomial that cannot be written as a product of two monomials. Next, we say that a monomial is a \emph{non-unary root} if it is either the single indeterminate $\ast$ or a product of at least two atoms. This is precisely the class of monomials that may occur beneath a nonempty canonical chain of unary operators. Then we let
\[
\overline a_d^c(r;\mathbf{s}), \qquad \widetilde a_d^c(r;\mathbf{s})
\]
respectively denote the number of atoms and non-unary roots in $V_d^c$ with degree $r$ and multiplicities $s_1, \ldots, s_d$. 
Define 
\begin{align*}
\overline A_d^c(u;\mathbf{p})
&:=
\sum_{r\ge1}\;\sum_{s_1,\ldots,s_d\ge0}
\overline a_d^c(r;\mathbf{s})\,u^r p_1^{s_1}\cdots p_d^{s_d},
\\
\widetilde A_d^c(u;\mathbf{p})
&:=
\sum_{r\ge1}\;\sum_{s_1,\ldots,s_d\ge0}
\widetilde a_d^c(r;\mathbf{s})\,u^r p_1^{s_1}\cdots p_d^{s_d},
\end{align*}
to be the corresponding generating functions. Because multiplication is still noncommutative, every monomial in $V_d^c$ factors
uniquely as a sequence of atoms.
Thus
\begin{equation}\label{eq:sec3seq}
A_d^c(u;\mathbf{p})
=
\sum_{j \ge1}\bigl(\,\overline A_d^c(u;\mathbf{p})\bigr)^j
=
\frac{\overline A_d^c(u;\mathbf{p})}{1-\overline A_d^c(u;\mathbf{p})}.
\end{equation}
Likewise, a non-unary root monomial is either the single indeterminate $\ast$ or a
product of at least two atoms, so
\begin{equation}\label{eq:sec3nonunary}
\widetilde A_d^c(u;\mathbf{p})
=u+\sum_{j \ge2}\bigl(\,\overline A_d^c(u;\mathbf{p})\bigr)^j
=u+\frac{\overline A_d^c(u;\mathbf{p})^2}{1-\overline A_d^c(u;\mathbf{p})}
=u+A_d^c(u;\mathbf{p})-\overline A_d^c(u;\mathbf{p}).
\end{equation}

We next analyze the atoms in the commutative setting. 
Observe that an atom in $V_d^c$ is either the single indeterminate $\ast$, or has the form
\[
P_{1}^{s_1} \left( P_{2}^{s_2} \left( \cdots P_{d}^{s_d} (v) \cdots \right)\right)
\]
where $v$ is a non-unary root in $V_d^c$, and $\mathbf{s} = (s_1, \ldots, s_d)$ is a nonzero vector in $\mathbb{Z}_{\geq 0}^d$. Since
\[
\sum_{\mathbf{s} \in \mathbb{Z}_{\geq 0}^d \setminus \{\mathbf{0}\}} p_1^{s_1}\cdots p_d^{s_d}
=
\prod_{i=1}^d \frac{1}{1-p_i}-1,
\]
we obtain that
\begin{equation}\label{eq:sec3atom-pre}
\overline A_d^c(u;\mathbf{p})
=u+
\left(\prod_{i=1}^d\frac{1}{1-p_i}-1\right)
\widetilde A_d^c(u;\mathbf{p}).
\end{equation}
Substituting~\eqref{eq:sec3nonunary} into~\eqref{eq:sec3atom-pre} and simplifying yields
a much cleaner formula.

\begin{lemma}\label{lem:sec3atom}
Let $\Delta(\mathbf{p}):=1-\prod_{i=1}^d (1-p_i)$. Then
\begin{equation}\label{eq:sec3atom}
\overline A_d^c(u;\mathbf{p})
=u+\Delta(\mathbf{p})\,A_d^c(u;\mathbf{p}).
\end{equation}
\end{lemma}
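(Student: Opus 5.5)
Abbreviate $A=A_d^c(u;\mathbf{p})$, $\overline A=\overline A_d^c(u;\mathbf{p})$, and $\widetilde A=\widetilde A_d^c(u;\mathbf{p})$. Write $Q:=\prod_{i=1}^d (1-p_i)$, so that $\prod_i (1-p_i)^{-1}-1=(1-Q)/Q=\Delta(\mathbf{p})/Q$. The proof is purely algebraic. We substitute \eqref{eq:sec3nonunary}, namely $\widetilde A=u+A-\overline A$, into \eqref{eq:sec3atom-pre}. This gives a linear equation in $\overline A$:
\[
\overline A = u + \frac{\Delta(\mathbf{p})}{Q}\bigl(u + A - \overline A\bigr).
\]

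Next we multiply through by $Q$ and collect the $\overline A$ terms on the left. Using $Q+\Delta(\mathbf{p})=1$, the coefficient of $\overline A$ becomes exactly $1$. On the right we get $Qu+\Delta(\mathbf{p})u+\Delta(\mathbf{p})A$, and the $u$ terms again combine via $Q+\Delta(\mathbf{p})=1$ into a single $u$. This yields $\overline A=u+\Delta(\mathbf{p})A$, which is \eqref{eq:sec3atom}.

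The only step needing care is the division by $Q$. This is harmless because $Q$ has constant term $1$ and is therefore invertible in the ring of formal power series in $u,p_1,\ldots,p_d$. All identities used hold in that ring, since $\overline A$ has no constant term and the geometric series in \eqref{eq:sec3seq} and \eqref{eq:sec3nonunary} converge formally. No real obstacle is expected; the main point is recognizing the identity $Q+\Delta(\mathbf{p})=1$, which makes both collections collapse.
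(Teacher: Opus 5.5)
Your proposal is correct and is essentially the paper's own proof: substitute $\widetilde A=u+A-\overline A$ from \eqref{eq:sec3nonunary} into \eqref{eq:sec3atom-pre}, multiply through by $Q=\prod_{i=1}^d(1-p_i)$, and collect the $\overline A$ and $u$ terms, using $Q+\Delta(\mathbf{p})=1$ to simplify. Your remark that $Q$ is invertible in the formal power series ring is a harmless extra justification that the paper leaves implicit.
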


\begin{proof}
From~\eqref{eq:sec3nonunary} we have $\widetilde A_d^c = u + A_d^c - \overline A_d^c$. Substituting into~\eqref{eq:sec3atom-pre} gives
\[
\overline A_d^c
=u+
\left(\prod_{i=1}^d\frac{1}{1-p_i}-1\right)
\bigl(u+A_d^c-\overline A_d^c\bigr).
\]
Multiplying both sides by $\prod_{i=1}^d (1-p_i)$ and simplifying yields
\[
\left( \prod_{i=1}^d (1-p_i) \right) \overline A_d^c
= u+
\left(1 -  \prod_{i=1}^d (1-p_i) \right)  A_d^c + \left( \prod_{i=1}^d (1-p_i)  -1 \right)  \overline A_d^c.
\]
Rearranging the above gives~\eqref{eq:sec3atom}.
\end{proof}

Combining~\eqref{eq:sec3seq} and~\eqref{eq:sec3atom}, we obtain the master functional
equation.

\begin{theorem}\label{thm:sec3master}
Let $\Delta(\mathbf{p}):=1-\prod_{i=1}^d (1-p_i)$.  Then the multivariate generating function $A_d^c(u;\mathbf{p})$ satisfies
\begin{equation}\label{eq:sec3masterfunc}
A_d^c(u;\mathbf{p})
=u+uA_d^c(u;\mathbf{p})
+\Delta(\mathbf{p}) \left( A_d^c(u;\mathbf{p})
+A_d^c(u;\mathbf{p})^2\right).
\end{equation}
\end{theorem}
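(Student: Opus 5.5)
The plan is to combine the sequence decomposition \eqref{eq:sec3seq} with Lemma~\ref{lem:sec3atom}, mirroring exactly the derivation of \eqref{eq:sec2masterfunc} in Section~\ref{sec2}. To lighten notation, write $A=A_d^c(u;\mathbf{p})$, $\overline A=\overline A_d^c(u;\mathbf{p})$ and $\Delta=\Delta(\mathbf{p})$.

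First, clear the denominator in \eqref{eq:sec3seq} to get $A(1-\overline A)=\overline A$, that is,
\[
A=\overline A+A\,\overline A .
\]
This identity is valid as an identity of formal power series because $\overline A$ has zero constant term. That in turn holds since every atom has degree $r\ge1$, so the geometric series $\sum_{j\ge1}\overline A^{\,j}$ converges formally.

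Next, substitute $\overline A=u+\Delta A$ from Lemma~\ref{lem:sec3atom} into both occurrences of $\overline A$. This gives
\[
A=u+\Delta A+A(u+\Delta A)=u+uA+\Delta\bigl(A+A^2\bigr),
\]
which is precisely \eqref{eq:sec3masterfunc}.

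There is no real obstacle, since the substantive work has already been done in Lemma~\ref{lem:sec3atom}. The only point deserving a remark is the formal validity of the manipulations. Both $\Delta(\mathbf{p})$ and $u$ lie in the ideal generated by $u,p_1,\ldots,p_d$, so every product and substitution above is well defined in $\mathbb{Z}[[u,p_1,\ldots,p_d]]$. As a sanity check, setting $d=1$ gives $\Delta=p_1$, and the equation reduces to \eqref{eq:sec2masterfunc}, as it should, since a single operator trivially commutes with itself.
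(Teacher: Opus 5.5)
Your proposal is correct and follows the paper's own proof: both combine \eqref{eq:sec3seq} with Lemma~\ref{lem:sec3atom} and clear the denominator. The only difference is order of operations. The paper substitutes $\overline A_d^c=u+\Delta A_d^c$ first and then multiplies through by $1-u-\Delta A_d^c$, whereas you clear the denominator first and then substitute; that difference is immaterial.
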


\begin{proof}
From~\eqref{eq:sec3atom} we have $\overline A_d^c = u + \Delta(\mathbf{p}) A_d^c$. Substituting 
into~\eqref{eq:sec3seq} gives
\[
A_d^c
=
\frac{u + \Delta(\mathbf{p}) A_d^c}{1-u - \Delta(\mathbf{p}) A_d^c}. 
\]
Multiplying both sides by $1-u - \Delta(\mathbf{p}) A_d^c$ and rearranging gives~\eqref{eq:sec3masterfunc}.
\end{proof}

Unlike the noncommuting unary case, the coefficients of $A_d^c$ do not seem to admit a
simple closed form in general.
Nevertheless, the multigraded functional equation still yields an effective recurrence.

For each nonempty subset $J\subseteq[d]$, let $\mathbf e_J\in\{0,1\}^d$ denote its
indicator vector.
Since
\[
\Delta(\mathbf{p})
=
1-\prod_{i=1}^d(1-p_i)
=
\sum_{\emptyset\neq J\subseteq[d]}
(-1)^{|J|+1}\mathbf{p}^{\mathbf e_J}
\]
(where the exponentiation $\mathbf{p}^{\mathbf e_J}$ is performed coordinatewise), 
extracting coefficients from~\eqref{eq:sec3masterfunc} gives the following result.

\begin{theorem}\label{thm:sec3multirec}
For every $\mathbf{s}\in\mathbb{Z}_{\ge0}^d$, we have $a_d^c(1;\mathbf{s})=1$.
For $r\ge2$,
\begin{align}
&a_d^c(r;\mathbf{s})
=
a_d^c(r-1;\mathbf{s}) \notag
\\
&\quad+
\sum_{\emptyset\neq J\subseteq[d]}
(-1)^{|J|+1}
\left(
a_d^c(r;\mathbf{s}-\mathbf e_J)
+
\sum_{i=1}^{r-1}
\sum_{\mathbf{0} \leq \boldsymbol{\alpha}\le \mathbf{s}-\mathbf e_J}
a_d^c(i;\boldsymbol{\alpha})\,
a_d^c(r-i;\mathbf{s}-\mathbf e_J-\boldsymbol{\alpha})
\right), \label{eq:sec3multirec}
\end{align}
where all vector inequalities are understood coordinatewise, and we use the convention
that $a_d^c(r;\mathbf{s})=0$ whenever $r\le0$ or some coordinate of $\mathbf{s}$ is
negative.
\end{theorem}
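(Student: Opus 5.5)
The plan is to extract the coefficient of $u^r\mathbf{p}^{\mathbf{s}}$ on both sides of the master functional equation~\eqref{eq:sec3masterfunc} from Theorem~\ref{thm:sec3master}, after first expanding $\Delta(\mathbf{p})$ by inclusion--exclusion as
\[
\Delta(\mathbf{p})=\sum_{\emptyset\neq J\subseteq[d]}(-1)^{|J|+1}\mathbf{p}^{\mathbf e_J}.
\]
This expansion is immediate from multiplying out $\prod_{i=1}^d(1-p_i)=\sum_{J\subseteq[d]}(-1)^{|J|}\mathbf{p}^{\mathbf e_J}$. Throughout we use the convention that coefficients with $r\le0$ or a negative coordinate of $\mathbf{s}$ vanish, which matches the fact that $A_d^c$ has no such terms.

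\textbf{The case $r=1$.} I would not read this off~\eqref{eq:sec3masterfunc}; it is cleaner to argue directly from the definition of $V_d^c$. A monomial of degree $1$ contains no product, so it is a single unary chain applied to $\ast$. In $V_d^c$ this chain must be weakly increasing, so for a given multiplicity vector $\mathbf{s}$ there is exactly one such monomial, namely
\[
P_1^{s_1}\bigl(\cdots P_d^{s_d}(\ast)\cdots\bigr).
\]
Hence $a_d^c(1;\mathbf{s})=1$. As a consistency check, the $u^1$-coefficient of~\eqref{eq:sec3masterfunc} reads $C_1(\mathbf{p})=1+\Delta(\mathbf{p})C_1(\mathbf{p})$, where $C_1$ denotes that coefficient. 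This gives $C_1=1/(1-\Delta)=\prod_i(1-p_i)^{-1}$, whose coefficients are all $1$, in agreement.

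\textbf{The case $r\ge2$.} Here the constant term $u$ contributes nothing. The remaining terms contribute as follows.
\begin{itemize}
\item The term $uA_d^c$ contributes $a_d^c(r-1;\mathbf{s})$.
\item For each $J$, the term $\mathbf{p}^{\mathbf e_J}A_d^c$ contributes $a_d^c(r;\mathbf{s}-\mathbf e_J)$.
\item For each $J$, the term $\mathbf{p}^{\mathbf e_J}(A_d^c)^2$ contributes the Cauchy product coefficient $\sum_{i,\boldsymbol{\alpha}}a_d^c(i;\boldsymbol{\alpha})\,a_d^c(r-i;\mathbf{s}-\mathbf e_J-\boldsymbol{\alpha})$.
\end{itemize}
In the Cauchy product, $i$ ranges over $1\le i\le r-1$ because $A_d^c$ has no $u^0$ term. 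The vector $\boldsymbol{\alpha}$ ranges over $\mathbf{0}\le\boldsymbol{\alpha}\le\mathbf{s}-\mathbf e_J$, and all other terms vanish by the convention. Multiplying by the signs $(-1)^{|J|+1}$ and summing over $J$ gives~\eqref{eq:sec3multirec}.

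\textbf{Effectiveness.} The right-hand side of~\eqref{eq:sec3multirec} still contains $a_d^c(r;\cdot)$, so I would check that it only involves indices strictly smaller in a well-founded order. It does: every such term has the same $r$ but a strictly smaller $|\mathbf{s}|$, since $J\neq\emptyset$. So the recurrence is effective under induction on $(r,|\mathbf{s}|)$ ordered lexicographically.

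The main obstacle is only bookkeeping: tracking the summation ranges and confirming that the boundary conventions absorb the terms where $\mathbf{s}-\mathbf e_J$ has a negative coordinate.
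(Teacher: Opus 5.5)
Your proposal is correct and follows the paper's proof essentially verbatim: the initial condition comes from the unique canonical chain $P_1^{s_1}(\cdots P_d^{s_d}(\ast)\cdots)$, and the recurrence comes from extracting the coefficient of $u^r\mathbf{p}^{\mathbf{s}}$ in \eqref{eq:sec3masterfunc} after expanding $\Delta(\mathbf{p})$ by inclusion--exclusion. Your consistency check at order $u^1$ (giving $\prod_i(1-p_i)^{-1}$) and your well-foundedness remark on $(r,|\mathbf{s}|)$ are both correct; they are useful additions that the paper leaves implicit.
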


\begin{proof}
The initial condition $a_d^c(1;\mathbf{s})=1$ is immediate: for fixed multiplicity vector
$\mathbf{s} \geq \mathbf{0}$ there is exactly one canonical commuting monomial of degree $1$, namely
\[
P_1^{s_1}\left( \cdots \left( P_d^{s_d}(\ast) \right) \cdots \right),
\]
where the unary operators are arranged in the chosen canonical order.
Next, suppose $r\ge2$. From~\eqref{eq:sec3masterfunc}, we have
\begin{align*}
a_d^c(r; \mathbf{s}) &= [u^r p_1^{s_1}\cdots p_d^{s_d}] A_d^c(u;\mathbf{p})\\
&= [u^r p_1^{s_1}\cdots p_d^{s_d}] \left( u+uA_d^c(u;\mathbf{p})
+\Delta(\mathbf{p}) \left( A_d^c(u;\mathbf{p})
+A_d^c(u;\mathbf{p})^2\right) \right).
\end{align*}
Now $[u^r p_1^{s_1}\cdots p_d^{s_d}] u = 0$ (since $r \geq 2$) and $[u^r p_1^{s_1}\cdots p_d^{s_d}] uA_d^c(u;\mathbf{p}) = a_d^c(r-1;\mathbf{s})$. Also,
\begin{align*}
&[u^r p_1^{s_1}\cdots p_d^{s_d}] \Delta(\mathbf{p}) \left( A_d^c(u;\mathbf{p})+A_d^c(u;\mathbf{p})^2\right)\\
={}& [u^r p_1^{s_1}\cdots p_d^{s_d}] \sum_{\emptyset\neq J\subseteq[d]}
(-1)^{|J|+1}\mathbf{p}^{\mathbf e_J} \left( A_d^c(u;\mathbf{p})+A_d^c(u;\mathbf{p})^2\right),
\end{align*}
which yields the sum on the right-hand side of~\eqref{eq:sec3multirec}.
\end{proof}

\subsection{Length-graded specializations}

We now turn to the length-graded sequences associated with $V_d^c$.  As in Section~\ref{sec2}, we fix $\ell \geq 1$ and map monomials in $V_d^c$ to bracketed words while assigning length $\ell$ to $\ast$. We define
\[
b_{d,\ell}^c(n)
:=
\sum_{\substack{r\ge1,\;\mathbf{s}\ge\mathbf{0}\\ \ell r+2|\mathbf{s}|=n}}
a_d^c(r;\mathbf{s}),
\qquad
B_{d,\ell}^c(z):=\sum_{n\ge1} b_{d,\ell}^c(n)z^n.
\]
Observe that
\[
B_{d,\ell}^c(z)=A_d^c(z^{\ell} ;z^2,\ldots,z^2).
\]
Observe that if we set $u=z^{\ell}$ and $p_1=\cdots=p_d=z^2$, then 
\[
\Delta(\mathbf{p}) = 1 - \prod_{i=1}^d (1-p_i) = 1 - (1-z^2)^d,
\]
and therefore~\eqref{eq:sec3masterfunc} specializes to
\begin{equation}\label{eq:sec3b11func}
B_{d,\ell}^c(z)
=
z^{\ell}+z^{\ell}B_{d,\ell}^c(z)
+\bigl(1-(1-z^2)^d\bigr)
\left(B_{d,\ell}^c(z)+B_{d,\ell}^c(z)^2\right).
\end{equation}
This gives the following.

\begin{theorem}\label{thm:sec3bdlcgeneral}
The sequence $b_{d,\ell}^c(n)$ satisfies
\[
b_{d,\ell}^c(\ell)=1,
\]
and for every $n\ge \ell +1$,
\begin{align}
\label{eq:sec3b11rec}
&
b_{d,\ell}^c(n)
=
b_{d,\ell}^c(n-\ell)
\\
&
\notag
+
\sum_{j=1}^{\min(d,\lfloor n/2\rfloor)}
(-1)^{j+1}\binom{d}{j}
\left(
b_{d,\ell}^c(n-2j)
+
\sum_{r=1}^{n-2j-1} b_{d,\ell}^c(r)\,b_{d,\ell}^c(n-2j-r)
\right),
\end{align}
with the convention $b_{d,\ell}^c(j)=0$ for $j \le \ell-1$.
\end{theorem}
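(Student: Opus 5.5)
The recurrence comes from extracting coefficients of $z^n$ in the specialized functional equation~\eqref{eq:sec3b11func}, exactly as Theorem~\ref{thm:sec3multirec} was obtained from~\eqref{eq:sec3masterfunc}. First I will expand the specialized difference term by the binomial theorem:
\[
1-(1-z^2)^d=\sum_{j=1}^{d}(-1)^{j+1}\binom{d}{j}z^{2j}.
\]
This is the one-variable analogue of the subset expansion of $\Delta(\mathbf p)$: grouping the subsets $J\subseteq[d]$ by $|J|=j$ gives $\binom{d}{j}$ terms, each specializing to $z^{2j}$.

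Next I will record the low-order behaviour. Every monomial has degree $r\ge1$, and its length is $\ell r+2|\mathbf s|\ge\ell$. Hence $B^c_{d,\ell}(z)=O(z^\ell)$, which justifies the convention $b^c_{d,\ell}(j)=0$ for $j\le \ell-1$ (and for $j\le 0$). The only object of length exactly $\ell$ is the lone $\ast$, so $b^c_{d,\ell}(\ell)=1$. This can also be read off~\eqref{eq:sec3b11func}: on the right-hand side, $z^\ell$ is the only term contributing to $[z^\ell]$, because $z^\ell B^c_{d,\ell}$ starts at $z^{2\ell}$ and the $\Delta$-terms start at $z^{2+\ell}$.

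For $n\ge\ell+1$, I will take $[z^n]$ of each term on the right-hand side of~\eqref{eq:sec3b11func}:
\begin{itemize}
\item $[z^n]z^\ell=0$;
\item $[z^n]z^\ell B^c_{d,\ell}=b^c_{d,\ell}(n-\ell)$;
\item $[z^n]z^{2j}B^c_{d,\ell}=b^c_{d,\ell}(n-2j)$;
\item $[z^n]z^{2j}(B^c_{d,\ell})^2=\sum_{r} b^c_{d,\ell}(r)\,b^c_{d,\ell}(n-2j-r)$.
\end{itemize}

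The step needing some care is the bookkeeping of summation ranges. In the convolution, both factors vanish at nonpositive index, so $r$ runs over $1\le r\le n-2j-1$. When $n-2j\le 1$ this range is empty, and any terms outside the support vanish by the convention anyway. The outer index $j$ may be truncated at $\min(d,\lfloor n/2\rfloor)$: for $j>n/2$ every coefficient involved has nonpositive index and is zero, and $\binom{d}{j}=0$ for $j>d$.

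Assembling these contributions gives~\eqref{eq:sec3b11rec}. As a sanity check, one could alternatively sum Theorem~\ref{thm:sec3multirec} over all $(r,\mathbf s)$ with $\ell r+2|\mathbf s|=n$, but the direct coefficient extraction is cleaner and is the route I will write up.
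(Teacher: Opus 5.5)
Your proposal is correct and is essentially the paper's proof: expand $1-(1-z^2)^d=\sum_{j=1}^{d}(-1)^{j+1}\binom{d}{j}z^{2j}$ and compare coefficients of $z^n$ in \eqref{eq:sec3b11func}. Your extra bookkeeping makes explicit what the paper leaves implicit, namely the initial value $b_{d,\ell}^c(\ell)=1$, the vanishing of coefficients below index $\ell$, and the truncation of the $j$- and $r$-ranges.
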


\begin{proof}
Since
\[
1-(1-z^2)^d
=
\sum_{j=1}^d (-1)^{j+1}\binom{d}{j}z^{2j},
\]
comparing coefficients of $z^n$ in~\eqref{eq:sec3b11func} gives \eqref{eq:sec3b11rec}.
\end{proof}

\begin{table}[ht]
\centering
\begin{tabular}{c|l|l}
$d$ & First terms of $b_{d,1}^c(n)$ & OEIS entry \\
\hline
\\[-12pt]
1 & $1,1,2,4,8,17,37,82,185,423,978,2283, 5373, 12735, 30372,\ldots$ & \seqnum{A004148} \\
2 & $1,1,3,7,16,42,109,289,787,2162,6013,16901, 47872,\ldots$ & \seqnum{A394942}\textsuperscript{*} \\
3 & $1,1,4,10,25,76,218,649,1996,6140,19207,60715, 193194,\ldots$ & \seqnum{A394943}\textsuperscript{*} \\
4 & $1, 1, 5, 13, 35, 119, 365, 1189, 4002, 13360, 45659, 157383, \ldots$ & \seqnum{A394944}\textsuperscript{*}
\end{tabular}
\caption{First terms of the sequences $b_{d,1}^c(n)$ for $d=1,2,3,4$.}
\label{tab:bcd1}
\end{table}

Table~\ref{tab:bcd1} records the first terms of $b_{d,1}^c(n)$ for $d \leq 4$.  
For $d=1$ this agrees with the generalized Catalan numbers \seqnum{A004148}, while the cases $d=2,3,4$ are among the new OEIS submissions arising from this project. 

On the other hand, it is easy to see that
$b_{1,\ell}^c(n) = b_{1,\ell}(n)$
for all $n, \ell \geq 1$. This is because when there is only one unary operator, 
commutativity is trivial, and we have $V_1 = V_1^c$.

\begin{table}[ht]
\centering
\begin{tabular}{c|l|l}
$d$ & First terms of $b_{d,2}^c(2n)$ & OEIS entry \\
\hline
\\[-12pt]
1 & $1,2,5,14,42,132,429,1430,4862,16796, 58786, 208012,\ldots$ & \seqnum{A000108} \\
2 & $1,3,10,38,156,673,3007,13792,64559,307153, 1480946,\ldots$ &\seqnum{A394945}\textsuperscript{*} \\
3 & $1,4,16,74,373,1978,10869,61333,353295,2068790,\ldots$ & \seqnum{A394946}\textsuperscript{*} \\
4 & $1, 5, 23, 123, 724, 4486, 28801, 189933, 1278791, 8753322, \ldots$ & \seqnum{A394947}\textsuperscript{*}
\end{tabular}
\caption{First terms of the sequences $b_{d,2}^c(2n)$ for $d=1,2,3,4$.}
\label{tab:bcd2}
\end{table}

Table~\ref{tab:bcd2} records the first terms of $b_{d,2}^c(2n)$ for $d \leq 4$.  
In particular, $b_{1,2}^c(2n)$ coincides with $b_{1,2}(2n)$ and again specializes 
to the Catalan numbers \seqnum{A000108}. 

\subsection{Combinatorial interpretations}\label{sec33}

The combinatorial models from Section~\ref{sec2} admit natural commutative analogs.
Since the underlying recursive constructions are the same, we do not repeat the maps in
full.
Instead, we record the extra condition introduced by the commutativity of the unary
operators: every maximal unary chain is canonically ordered, and this appears
combinatorially as a weak monotonicity condition on labels.

\subsubsection{Rooted ordered trees with monotone unary chains}

As in Section~\ref{sec2}, every monomial may be represented by a rooted ordered tree in which leaves represent occurrences of the indeterminate $\ast$,
unary internal nodes are labeled by elements of $[d]$, and unlabeled internal nodes of
arity at least $2$ represent nontrivial products.
Because multiplication remains noncommutative, the children of every non-unary internal
node remain ordered from left to right.

Now by restricting the domain of this map to the subset $V_d^c$, the labels along each maximal chain of unary vertices are weakly increasing when read from the root toward the leaves. This gives the following.

\begin{proposition}\label{prop:sec33trees}
For integers $r\ge1$ and $\mathbf{s}=(s_1,\ldots,s_d)\in\mathbb{Z}_{\ge0}^d$, the
number $a_d^c(r;\mathbf{s})$ counts rooted ordered trees with exactly $r$ leaves and
exactly $s_i$ unary internal nodes labeled $i$, for each $i\in[d]$, in which every
non-unary internal node has at least two children and every maximal unary chain has
weakly increasing labels.
\end{proposition}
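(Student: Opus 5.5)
The plan is to restrict the bijection of Proposition~\ref{prop:sec23trees} to $V_d^c$ and identify its image. Let $\Phi$ denote the map from $V_d$ to rooted ordered trees with $r$ leaves, $s_i$ unary nodes labeled $i$, and every non-unary internal node of arity at least $2$. By Proposition~\ref{prop:sec23trees}, $\Phi$ is a bijection preserving degree and multiplicity vector. It therefore suffices to show that a monomial $v\in V_d$ lies in $V_d^c$ if and only if every maximal unary chain of $\Phi(v)$ has weakly increasing labels read from the root toward the leaves. Counting the images of the monomials of degree $r$ and multiplicity $\mathbf{s}$ then gives $a_d^c(r;\mathbf{s})$.

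First I will pin down the dictionary between monomials and trees. A subword $P_i(P_j(w))$ corresponds exactly to a unary node labeled $i$ whose unique child is a unary node labeled $j$. Consequently a maximal chain of nested unary operators $P_{i_1}(P_{i_2}(\cdots P_{i_m}(w)\cdots))$ corresponds to a maximal path of unary nodes labeled $i_1,\dots,i_m$ from top to bottom. Maximality means two things: the node above $i_1$ (if any) is not unary, and $w$ is a non-unary root (either $\ast$ or a product), so its tree root is a leaf or a node of arity at least $2$.

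Next I will characterize $V_d^c$ intrinsically. The claim is that $v\in V_d$ lies in $V_d^c$ if and only if $v$ contains no subword $P_i(P_j(v'))$ with $i>j$. I will prove this by structural induction on the recursive definition of $V_d^c$. For the forward direction, each generating rule introduces only $\ast$, concatenations, $P_i$ applied to a product, or $P_i(P_j(\cdot))$ with $i\le j$. For the converse, I induct on the size of $v$ and split by its outermost form: $\ast$, a product $v_1v_2$ (both factors satisfy the condition, so lie in $V_d^c$ by induction), $P_i$ of a product, or $P_i(P_j(v'))$ with $i\le j$. In each case the matching rule in the definition applies. This matches the lemma preceding the section, which already noted that $V_d^c$ consists of monomials whose maximal unary chains are weakly increasing.

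Finally, combining the two steps: the absence of $P_i(P_j(\cdot))$ with $i>j$ is equivalent to every parent--child pair of unary nodes in $\Phi(v)$ satisfying label(parent) $\le$ label(child). That is precisely weak monotonicity along each maximal unary chain. I expect no real obstacle. The only care needed is the structural induction in the second step, namely checking that the case where $v$ is a product is handled by the concatenation rule, since $V_d^c$ is closed under products.
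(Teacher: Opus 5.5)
Your proposal is correct and follows the paper's argument: restrict the tree bijection of Proposition~\ref{prop:sec23trees} to $V_d^c$ and observe that the canonical weakly increasing order of each maximal unary chain is exactly the condition that every unary node's label is at most that of a unary child. Your structural induction only makes explicit the description of $V_d^c$ that the paper takes for granted. One small repair: your case split in the converse omits $v=P_i(\ast)$. The third rule defining $V_d^c$ does not literally cover this case, since it only allows $P_i$ applied to a product. You should therefore read that rule as applying to any non-unary root, $\ast$ included, as the paper evidently intends (cf.\ the representative $P_1^{s_1}(\cdots P_d^{s_d}(\ast)\cdots)$ in the proof of Theorem~\ref{thm:sec3multirec}).
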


\begin{proof}
Use the same tree construction as in Proposition~\ref{prop:sec23trees}.  The only
difference is that in the commuting case we work with canonical representatives, so each
maximal chain of unary operators is written in nondecreasing order.  This is exactly the
stated monotonicity condition on the corresponding unary chains in the tree.
\end{proof}

\subsubsection{Peakless lattice paths with monotone ascents}

Likewise, the path model from Section~\ref{sec2} carries over unchanged except for the same monotonicity phenomenon. Opening brackets of type $i$ still correspond to up-steps $U_i=(1,1)$, closing brackets to down-steps $D=(1,-1)$, and the realization of $\ast$ to a horizontal step $H_\ell=(\ell,0)$.
As before, the resulting paths remain on or above the $x$-axis, end on the $x$-axis, and avoid peaks $UD$.

For an up-step $U_i$ from height $j$ to height $j+1$, we define its \emph{matching down step} to be the first subsequent down-step from height $j+1$ to height $j$.
Thus a matching pair of steps corresponds to the opening and closing brackets of a single unary operator. We call a maximal consecutive block of up-steps a \emph{matched ascent}  if the matching down-steps of all those up-steps belong to a single descent. For example, the initial ascent $UUU$ is a matched ascent in $UUUHHDDD$, but not in $UUUHDHDD$, since its three matching down-steps do not belong to the same descent.

In the commuting case, the labels along every matched ascent are weakly increasing.

\begin{proposition}\label{prop:sec33pathsgeneral}
Let $\ell\ge1$ and $n\ge1$.
Then $b_{d,\ell}^c(n)$ counts lattice paths of total horizontal length $n$ that
\begin{itemize}
\item start at $(0,0)$ and end on the $x$-axis;
\item use only steps $U_i=(1,1)$ for $1\le i\le d$, $D=(1,-1)$, and
$H_\ell=(\ell,0)$;
\item remain on or above the $x$-axis;
\item avoid peaks, that is, do not contain an instance of $UD$;
\item have weakly increasing labels along every matched ascent.
\end{itemize}
\end{proposition}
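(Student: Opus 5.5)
The plan is to restrict the bijection of Proposition~\ref{prop:sec23pathsgeneral} to the canonical set $V_d^c$ and show that its image is exactly the set of paths described. The encoding (opening bracket of type $i$ $\mapsto U_i$, closing bracket $\mapsto D$, $\ast\mapsto H_\ell$) is unchanged. By Proposition~\ref{prop:sec23pathsgeneral} it is already a length-preserving bijection between all of $V_d$ and the peakless nonnegative paths ending on the axis. By the Lemma on canonical representatives, $b^c_{d,\ell}(n)$ counts the elements of $V_d^c$ of length $n$. So it suffices to prove the following: a monomial $v\in V_d$ lies in $V_d^c$ if and only if its path has weakly increasing labels along the relevant runs of up-steps.

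The first step is to translate ``maximal unary chain'' into path language. A subword $P_i(P_j(w))$ means that the opening bracket of $P_j$ immediately follows that of $P_i$, and the closing bracket of $P_j$ immediately precedes that of $P_i$. Equivalently, $U_i U_j$ are consecutive up-steps whose matching down-steps are consecutive, with $P_j$'s down-step first. This uses that the matching down-step of an up-step is precisely the image of the corresponding closing bracket, which follows from the usual height argument for well-bracketed words.

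Consequently, a maximal chain $P_{i_1}(P_{i_2}(\cdots P_{i_k}(w)\cdots))$ with $w$ a non-unary root corresponds to a run $U_{i_1}\cdots U_{i_k}$ of consecutive up-steps whose matching down-steps form a consecutive run $D^k$ inside one descent. Maximality at the bottom (argument a non-unary root) and at the top (the chain is not itself the argument of another operator) corresponds to the run being maximal with this property. The canonical condition of $V_d^c$ is that $i_1\le\cdots\le i_k$ for every such chain, read in the direction of the path, so it becomes exactly the weak monotonicity of labels along every such run.

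The main obstacle is matching this precisely with the definition of a matched ascent. A maximal block of up-steps in the path may consist of several chains stacked on top of each other. For example, $P_a(P_b(P_c(\ast)\ast))$ gives $U_aU_bU_cH_\ell DH_\ell DD$. Here $P_aP_b$ is a maximal chain that must satisfy $a\le b$, yet the whole block $U_aU_bU_c$ is not a matched ascent under the literal wording, since its matching down-steps lie in two descents.

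I would therefore first fix the reading of ``matched ascent'' as a maximal run of consecutive up-steps (not necessarily a whole maximal ascent) whose matching down-steps all lie in a single descent. Each maximal ascent then splits uniquely into such runs, one per unary chain, and the argument above gives a bijection between runs and maximal unary chains. With that reading, both directions are immediate:
\begin{itemize}
\item a canonical monomial yields monotone runs;
\item conversely, a path with monotone runs decodes, via the inverse map of Proposition~\ref{prop:sec23pathsgeneral}, to a monomial whose maximal chains are all weakly increasing, hence to an element of $V_d^c$.
\end{itemize}
Counting by horizontal length then proves the claim.
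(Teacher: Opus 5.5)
Your proposal is correct and takes the same route as the paper: restrict the bracketed-word-to-path bijection of Proposition~\ref{prop:sec23pathsgeneral} to $V_d^c$, and translate the weakly increasing order on each maximal unary chain into monotonicity along matched ascents. You also spell out the chain-to-run correspondence that the paper's one-line proof leaves implicit, and your reading of ``matched ascent'' as a run of up-steps that is maximal with respect to the single-descent property (rather than an entire maximal ascent) is the one the statement needs: under the whole-ascent reading, the path $U_2U_1U_1H_1DH_1DD$, which encodes the non-canonical monomial $P_2(P_1(P_1(\ast)\ast))$, satisfies all five conditions, so the path count would exceed $b_{2,1}^c(8)$.
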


\begin{proof}
Apply the same bracketed-word-to-path map as in
Proposition~\ref{prop:sec23pathsgeneral}.  The first four properties are unchanged.
The additional condition comes from the fact that in $V_d^c$ every maximal chain of unary operators is written in canonical nondecreasing order, which translates exactly into weakly
increasing labels along each matched ascent.
\end{proof}

For example, Figure~\ref{fig:paths} illustrates the 21 lattice paths counted by $b_{2,3}^c(10)$.

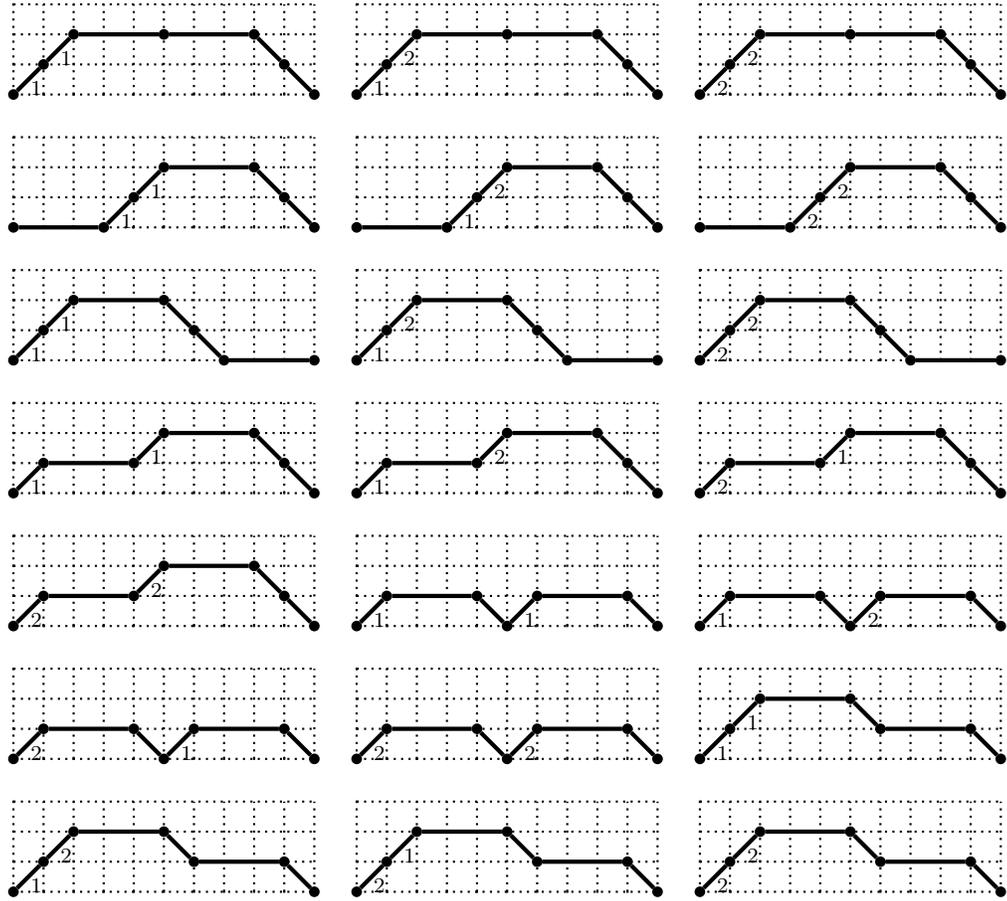
\begin{figure}[htbp]
\centering
\def\sc{0.4}
\def\grid{\foreach \x in {\xlb ,...,\xub}
{    \ifthenelse{\NOT 0 = \x}{\draw[thick](\x ,-1pt) -- (\x ,1pt);}{}
\draw[dotted](\x,\ylb- \buf) -- (\x,\yub + \buf);}
\foreach \y in {\ylb ,...,\yub}
{    \ifthenelse{\NOT 0 = \y}{\draw[thick](-1pt, \y) -- (1pt, \y);}{}
\draw[dotted](\xlb- \buf, \y) -- (\xub + \buf, \y);}
}

\begin{tabular}{ccc}
\begin{tikzpicture}[scale = \sc, font=\scriptsize\sffamily, thick,main node/.style={circle,inner sep=0.4mm,draw, fill}]
\def\xlb{0}; \def\xub{10}; \def\ylb{0}; \def\yub{3}; \def\buf{0}; \grid
\node[main node] at (0,0) (0) {};
\node[main node] at (1,1) (1) {};
\node[main node] at (2,2) (2) {};
\node[main node] at (5,2) (3) {};
\node[main node] at (8,2) (4) {};
\node[main node] at (9,1) (5) {};
\node[main node] at (10,0) (6) {};
\draw[ultra thick] 
(0) --node[pos=0.5, below right, inner sep=0pt] {$1$} 
(1) --node[pos=0.5, below right, inner sep=0pt] {$1$}
(2) -- (3)-- (4)-- (5)-- (6);
\end{tikzpicture}
&
\begin{tikzpicture}[scale = \sc, font=\scriptsize\sffamily, thick,main node/.style={circle,inner sep=0.4mm,draw, fill}]
\def\xlb{0}; \def\xub{10}; \def\ylb{0}; \def\yub{3}; \def\buf{0}; \grid
\node[main node] at (0,0) (0) {};
\node[main node] at (1,1) (1) {};
\node[main node] at (2,2) (2) {};
\node[main node] at (5,2) (3) {};
\node[main node] at (8,2) (4) {};
\node[main node] at (9,1) (5) {};
\node[main node] at (10,0) (6) {};
\draw[ultra thick] 
(0) --node[pos=0.5, below right, inner sep=0pt] {$1$} 
(1) --node[pos=0.5, below right, inner sep=0pt] {$2$}
(2) -- (3)-- (4)-- (5)-- (6);
\end{tikzpicture}
&
\begin{tikzpicture}[scale = \sc, font=\scriptsize\sffamily, thick,main node/.style={circle,inner sep=0.4mm,draw, fill}]
\def\xlb{0}; \def\xub{10}; \def\ylb{0}; \def\yub{3}; \def\buf{0}; \grid
\node[main node] at (0,0) (0) {};
\node[main node] at (1,1) (1) {};
\node[main node] at (2,2) (2) {};
\node[main node] at (5,2) (3) {};
\node[main node] at (8,2) (4) {};
\node[main node] at (9,1) (5) {};
\node[main node] at (10,0) (6) {};
\draw[ultra thick] 
(0) --node[pos=0.5, below right, inner sep=0pt] {$2$} 
(1) --node[pos=0.5, below right, inner sep=0pt] {$2$}
(2) -- (3)-- (4)-- (5)-- (6);
\end{tikzpicture}
\\[0.8em]

\begin{tikzpicture}[scale = \sc, font=\scriptsize\sffamily, thick,main node/.style={circle,inner sep=0.4mm,draw, fill}]
\def\xlb{0}; \def\xub{10}; \def\ylb{0}; \def\yub{3}; \def\buf{0}; \grid
\node[main node] at (0,0) (0) {};
\node[main node] at (3,0) (1) {};
\node[main node] at (4,1) (2) {};
\node[main node] at (5,2) (3) {};
\node[main node] at (8,2) (4) {};
\node[main node] at (9,1) (5) {};
\node[main node] at (10,0) (6) {};
\draw[ultra thick] 
(0) -- (1)
--node[pos=0.5, below right, inner sep=0pt] {$1$} (2)
--node[pos=0.5, below right, inner sep=0pt] {$1$} (3)
-- (4)-- (5)-- (6);
\end{tikzpicture}
&
\begin{tikzpicture}[scale = \sc, font=\scriptsize\sffamily, thick,main node/.style={circle,inner sep=0.4mm,draw, fill}]
\def\xlb{0}; \def\xub{10}; \def\ylb{0}; \def\yub{3}; \def\buf{0}; \grid
\node[main node] at (0,0) (0) {};
\node[main node] at (3,0) (1) {};
\node[main node] at (4,1) (2) {};
\node[main node] at (5,2) (3) {};
\node[main node] at (8,2) (4) {};
\node[main node] at (9,1) (5) {};
\node[main node] at (10,0) (6) {};
\draw[ultra thick] 
(0) -- (1)
--node[pos=0.5, below right, inner sep=0pt] {$1$} (2)
--node[pos=0.5, below right, inner sep=0pt] {$2$} (3)
-- (4)-- (5)-- (6);
\end{tikzpicture}
&
\begin{tikzpicture}[scale = \sc, font=\scriptsize\sffamily, thick,main node/.style={circle,inner sep=0.4mm,draw, fill}]
\def\xlb{0}; \def\xub{10}; \def\ylb{0}; \def\yub{3}; \def\buf{0}; \grid
\node[main node] at (0,0) (0) {};
\node[main node] at (3,0) (1) {};
\node[main node] at (4,1) (2) {};
\node[main node] at (5,2) (3) {};
\node[main node] at (8,2) (4) {};
\node[main node] at (9,1) (5) {};
\node[main node] at (10,0) (6) {};
\draw[ultra thick] 
(0) -- (1)
--node[pos=0.5, below right, inner sep=0pt] {$2$} (2)
--node[pos=0.5, below right, inner sep=0pt] {$2$} (3)
-- (4)-- (5)-- (6);
\end{tikzpicture}
\\[0.8em]

\begin{tikzpicture}[scale = \sc, font=\scriptsize\sffamily, thick,main node/.style={circle,inner sep=0.4mm,draw, fill}]
\def\xlb{0}; \def\xub{10}; \def\ylb{0}; \def\yub{3}; \def\buf{0}; \grid
\node[main node] at (0,0) (0) {};
\node[main node] at (1,1) (1) {};
\node[main node] at (2,2) (2) {};
\node[main node] at (5,2) (3) {};
\node[main node] at (6,1) (4) {};
\node[main node] at (7,0) (5) {};
\node[main node] at (10,0) (6) {};
\draw[ultra thick] 
(0) --node[pos=0.5, below right, inner sep=0pt] {$1$} (1)
--node[pos=0.5, below right, inner sep=0pt] {$1$} (2)
-- (3)-- (4)-- (5)-- (6);
\end{tikzpicture}
&
\begin{tikzpicture}[scale = \sc, font=\scriptsize\sffamily, thick,main node/.style={circle,inner sep=0.4mm,draw, fill}]
\def\xlb{0}; \def\xub{10}; \def\ylb{0}; \def\yub{3}; \def\buf{0}; \grid
\node[main node] at (0,0) (0) {};
\node[main node] at (1,1) (1) {};
\node[main node] at (2,2) (2) {};
\node[main node] at (5,2) (3) {};
\node[main node] at (6,1) (4) {};
\node[main node] at (7,0) (5) {};
\node[main node] at (10,0) (6) {};
\draw[ultra thick] 
(0) --node[pos=0.5, below right, inner sep=0pt] {$1$} (1)
--node[pos=0.5, below right, inner sep=0pt] {$2$} (2)
-- (3)-- (4)-- (5)-- (6);
\end{tikzpicture}
&
\begin{tikzpicture}[scale = \sc, font=\scriptsize\sffamily, thick,main node/.style={circle,inner sep=0.4mm,draw, fill}]
\def\xlb{0}; \def\xub{10}; \def\ylb{0}; \def\yub{3}; \def\buf{0}; \grid
\node[main node] at (0,0) (0) {};
\node[main node] at (1,1) (1) {};
\node[main node] at (2,2) (2) {};
\node[main node] at (5,2) (3) {};
\node[main node] at (6,1) (4) {};
\node[main node] at (7,0) (5) {};
\node[main node] at (10,0) (6) {};
\draw[ultra thick] 
(0) --node[pos=0.5, below right, inner sep=0pt] {$2$} (1)
--node[pos=0.5, below right, inner sep=0pt] {$2$} (2)
-- (3)-- (4)-- (5)-- (6);
\end{tikzpicture}
\\[0.8em]

\begin{tikzpicture}[scale = \sc, font=\scriptsize\sffamily, thick,main node/.style={circle,inner sep=0.4mm,draw, fill}]
\def\xlb{0}; \def\xub{10}; \def\ylb{0}; \def\yub{3}; \def\buf{0}; \grid
\node[main node] at (0,0) (0) {};
\node[main node] at (1,1) (1) {};
\node[main node] at (4,1) (2) {};
\node[main node] at (5,2) (3) {};
\node[main node] at (8,2) (4) {};
\node[main node] at (9,1) (5) {};
\node[main node] at (10,0) (6) {};
\draw[ultra thick] 
(0) --node[pos=0.5, below right, inner sep=0pt] {$1$} (1)
-- (2)
--node[pos=0.5, below right, inner sep=0pt] {$1$} (3)
-- (4)-- (5)-- (6);
\end{tikzpicture}
&
\begin{tikzpicture}[scale = \sc, font=\scriptsize\sffamily, thick,main node/.style={circle,inner sep=0.4mm,draw, fill}]
\def\xlb{0}; \def\xub{10}; \def\ylb{0}; \def\yub{3}; \def\buf{0}; \grid
\node[main node] at (0,0) (0) {};
\node[main node] at (1,1) (1) {};
\node[main node] at (4,1) (2) {};
\node[main node] at (5,2) (3) {};
\node[main node] at (8,2) (4) {};
\node[main node] at (9,1) (5) {};
\node[main node] at (10,0) (6) {};
\draw[ultra thick] 
(0) --node[pos=0.5, below right, inner sep=0pt] {$1$} (1)
-- (2)
--node[pos=0.5, below right, inner sep=0pt] {$2$} (3)
-- (4)-- (5)-- (6);
\end{tikzpicture}
&
\begin{tikzpicture}[scale = \sc, font=\scriptsize\sffamily, thick,main node/.style={circle,inner sep=0.4mm,draw, fill}]
\def\xlb{0}; \def\xub{10}; \def\ylb{0}; \def\yub{3}; \def\buf{0}; \grid
\node[main node] at (0,0) (0) {};
\node[main node] at (1,1) (1) {};
\node[main node] at (4,1) (2) {};
\node[main node] at (5,2) (3) {};
\node[main node] at (8,2) (4) {};
\node[main node] at (9,1) (5) {};
\node[main node] at (10,0) (6) {};
\draw[ultra thick] 
(0) --node[pos=0.5, below right, inner sep=0pt] {$2$} (1)
-- (2)
--node[pos=0.5, below right, inner sep=0pt] {$1$} (3)
-- (4)-- (5)-- (6);
\end{tikzpicture}
\\[0.8em]

\begin{tikzpicture}[scale = \sc, font=\scriptsize\sffamily, thick,main node/.style={circle,inner sep=0.4mm,draw, fill}]
\def\xlb{0}; \def\xub{10}; \def\ylb{0}; \def\yub{3}; \def\buf{0}; \grid
\node[main node] at (0,0) (0) {};
\node[main node] at (1,1) (1) {};
\node[main node] at (4,1) (2) {};
\node[main node] at (5,2) (3) {};
\node[main node] at (8,2) (4) {};
\node[main node] at (9,1) (5) {};
\node[main node] at (10,0) (6) {};
\draw[ultra thick] 
(0) --node[pos=0.5, below right, inner sep=0pt] {$2$} (1)
-- (2)
--node[pos=0.5, below right, inner sep=0pt] {$2$} (3)
-- (4)-- (5)-- (6);
\end{tikzpicture}
&
\begin{tikzpicture}[scale = \sc, font=\scriptsize\sffamily, thick,main node/.style={circle,inner sep=0.4mm,draw, fill}]
\def\xlb{0}; \def\xub{10}; \def\ylb{0}; \def\yub{3}; \def\buf{0}; \grid
\node[main node] at (0,0) (0) {};
\node[main node] at (1,1) (1) {};
\node[main node] at (4,1) (2) {};
\node[main node] at (5,0) (3) {};
\node[main node] at (6,1) (4) {};
\node[main node] at (9,1) (5) {};
\node[main node] at (10,0) (6) {};
\draw[ultra thick] 
(0) --node[pos=0.5, below right, inner sep=0pt] {$1$} (1)
-- (2)-- (3)
--node[pos=0.5, below right, inner sep=0pt] {$1$} (4)
-- (5)-- (6);
\end{tikzpicture}
&
\begin{tikzpicture}[scale = \sc, font=\scriptsize\sffamily, thick,main node/.style={circle,inner sep=0.4mm,draw, fill}]
\def\xlb{0}; \def\xub{10}; \def\ylb{0}; \def\yub{3}; \def\buf{0}; \grid
\node[main node] at (0,0) (0) {};
\node[main node] at (1,1) (1) {};
\node[main node] at (4,1) (2) {};
\node[main node] at (5,0) (3) {};
\node[main node] at (6,1) (4) {};
\node[main node] at (9,1) (5) {};
\node[main node] at (10,0) (6) {};
\draw[ultra thick] 
(0) --node[pos=0.5, below right, inner sep=0pt] {$1$} (1)
-- (2)-- (3)
--node[pos=0.5, below right, inner sep=0pt] {$2$} (4)
-- (5)-- (6);
\end{tikzpicture}
\\[0.8em]

\begin{tikzpicture}[scale = \sc, font=\scriptsize\sffamily, thick,main node/.style={circle,inner sep=0.4mm,draw, fill}]
\def\xlb{0}; \def\xub{10}; \def\ylb{0}; \def\yub{3}; \def\buf{0}; \grid
\node[main node] at (0,0) (0) {};
\node[main node] at (1,1) (1) {};
\node[main node] at (4,1) (2) {};
\node[main node] at (5,0) (3) {};
\node[main node] at (6,1) (4) {};
\node[main node] at (9,1) (5) {};
\node[main node] at (10,0) (6) {};
\draw[ultra thick] 
(0) --node[pos=0.5, below right, inner sep=0pt] {$2$} (1)
-- (2)-- (3)
--node[pos=0.5, below right, inner sep=0pt] {$1$} (4)
-- (5)-- (6);
\end{tikzpicture}
&
\begin{tikzpicture}[scale = \sc, font=\scriptsize\sffamily, thick,main node/.style={circle,inner sep=0.4mm,draw, fill}]
\def\xlb{0}; \def\xub{10}; \def\ylb{0}; \def\yub{3}; \def\buf{0}; \grid
\node[main node] at (0,0) (0) {};
\node[main node] at (1,1) (1) {};
\node[main node] at (4,1) (2) {};
\node[main node] at (5,0) (3) {};
\node[main node] at (6,1) (4) {};
\node[main node] at (9,1) (5) {};
\node[main node] at (10,0) (6) {};
\draw[ultra thick] 
(0) --node[pos=0.5, below right, inner sep=0pt] {$2$} (1)
-- (2)-- (3)
--node[pos=0.5, below right, inner sep=0pt] {$2$} (4)
-- (5)-- (6);
\end{tikzpicture}
&
\begin{tikzpicture}[scale = \sc, font=\scriptsize\sffamily, thick,main node/.style={circle,inner sep=0.4mm,draw, fill}]
\def\xlb{0}; \def\xub{10}; \def\ylb{0}; \def\yub{3}; \def\buf{0}; \grid
\node[main node] at (0,0) (0) {};
\node[main node] at (1,1) (1) {};
\node[main node] at (2,2) (2) {};
\node[main node] at (5,2) (3) {};
\node[main node] at (6,1) (4) {};
\node[main node] at (9,1) (5) {};
\node[main node] at (10,0) (6) {};
\draw[ultra thick] 
(0) --node[pos=0.5, below right, inner sep=0pt] {$1$} (1)
--node[pos=0.5, below right, inner sep=0pt] {$1$} (2)
-- (3)-- (4)-- (5)-- (6);
\end{tikzpicture}
\\[0.8em]

\begin{tikzpicture}[scale = \sc, font=\scriptsize\sffamily, thick,main node/.style={circle,inner sep=0.4mm,draw, fill}]
\def\xlb{0}; \def\xub{10}; \def\ylb{0}; \def\yub{3}; \def\buf{0}; \grid
\node[main node] at (0,0) (0) {};
\node[main node] at (1,1) (1) {};
\node[main node] at (2,2) (2) {};
\node[main node] at (5,2) (3) {};
\node[main node] at (6,1) (4) {};
\node[main node] at (9,1) (5) {};
\node[main node] at (10,0) (6) {};
\draw[ultra thick] 
(0) --node[pos=0.5, below right, inner sep=0pt] {$1$} (1)
--node[pos=0.5, below right, inner sep=0pt] {$2$} (2)
-- (3)-- (4)-- (5)-- (6);
\end{tikzpicture}
&
\begin{tikzpicture}[scale = \sc, font=\scriptsize\sffamily, thick,main node/.style={circle,inner sep=0.4mm,draw, fill}]
\def\xlb{0}; \def\xub{10}; \def\ylb{0}; \def\yub{3}; \def\buf{0}; \grid
\node[main node] at (0,0) (0) {};
\node[main node] at (1,1) (1) {};
\node[main node] at (2,2) (2) {};
\node[main node] at (5,2) (3) {};
\node[main node] at (6,1) (4) {};
\node[main node] at (9,1) (5) {};
\node[main node] at (10,0) (6) {};
\draw[ultra thick] 
(0) --node[pos=0.5, below right, inner sep=0pt] {$2$} (1)
--node[pos=0.5, below right, inner sep=0pt] {$1$} (2)
-- (3)-- (4)-- (5)-- (6);
\end{tikzpicture}
&
\begin{tikzpicture}[scale = \sc, font=\scriptsize\sffamily, thick,main node/.style={circle,inner sep=0.4mm,draw, fill}]
\def\xlb{0}; \def\xub{10}; \def\ylb{0}; \def\yub{3}; \def\buf{0}; \grid
\node[main node] at (0,0) (0) {};
\node[main node] at (1,1) (1) {};
\node[main node] at (2,2) (2) {};
\node[main node] at (5,2) (3) {};
\node[main node] at (6,1) (4) {};
\node[main node] at (9,1) (5) {};
\node[main node] at (10,0) (6) {};
\draw[ultra thick] 
(0) --node[pos=0.5, below right, inner sep=0pt] {$2$} (1)
--node[pos=0.5, below right, inner sep=0pt] {$2$} (2)
-- (3)-- (4)-- (5)-- (6);
\end{tikzpicture}
\end{tabular}

\caption{The 21 peakless lattice paths counted by $b_{2,3}^c(10)$.
These use up-steps labeled by $1$ or $2$, down-steps, and horizontal steps of length $3$, with weakly increasing labels along every matched ascent.}\label{fig:paths}
\end{figure}

The cases $\ell = 1$ and $\ell = 2$ again give the two most relevant specializations.

\begin{corollary}\label{cor:sec33pathsspecial}
For every $n\ge1$,
\begin{enumerate}
\item[\textup{(i)}]
$b_{d,1}^c(n)$ counts peakless Motzkin paths of length $n$ with $d$ types of up-steps,
such that the labels along every matched ascent are weakly increasing.
\item[\textup{(ii)}]
$b_{d,2}^c(2n)$ counts peakless Schr\"oder paths of semilength $n$ with $d$ types of
up-steps, such that the labels along every matched ascent are weakly increasing.
\end{enumerate}
\end{corollary}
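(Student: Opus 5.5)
The plan is to specialize Proposition~\ref{prop:sec33pathsgeneral} to $\ell=1$ and $\ell=2$, in the same way that Corollary~\ref{cor:sec23pathsspecial} was obtained from Proposition~\ref{prop:sec23pathsgeneral}. That proposition already gives, for every $\ell\ge1$, a description of the paths counted by $b_{d,\ell}^c(n)$. These are peakless paths on or above the $x$-axis, built from labeled up-steps $U_i$, down-steps $D$, and horizontal steps $H_\ell=(\ell,0)$, with weakly increasing labels along every matched ascent. So the only task is to identify the resulting path families for the two values of $\ell$.

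For (i), I take $\ell=1$. Then $H_1=(1,0)$ is an ordinary unit horizontal step. The paths of total horizontal length $n$ described in Proposition~\ref{prop:sec33pathsgeneral} are then exactly Motzkin paths of length $n$ that satisfy three conditions: their up-steps carry one of $d$ labels, they contain no peak $UD$, and their labels are weakly increasing along each matched ascent.

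For (ii), I take $\ell=2$, so $H_2=(2,0)$ is the flat step of a Schr\"oder path. I first check the parity. Each $\ast$ contributes length $2$ and each bracket pair contributes $2$, so the total length is $2r+2|\mathbf{s}|$, which is always even. Hence it is natural to write the length as $2n$ and use the semilength $n$ as the size parameter. Proposition~\ref{prop:sec33pathsgeneral} then says that $b^c_{d,2}(2n)$ counts peakless Schr\"oder paths of semilength $n$ with $d$ types of up-steps and weakly increasing labels along matched ascents.

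There is no real obstacle here. The one point worth a sentence is that the notion of a matched ascent, and the monotonicity condition attached to it, depend only on the up- and down-steps. They do not depend on the length of the horizontal step. Therefore the condition carries over verbatim when $\ell$ is specialized to $1$ or $2$.
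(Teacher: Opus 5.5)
Your proposal is correct and follows the paper's route: the corollary is Proposition~\ref{prop:sec33pathsgeneral} specialized to $\ell=1$ and $\ell=2$ (where the total length $2r+2|\mathbf{s}|$ is even, so semilength $n$ is the natural size), exactly as Corollary~\ref{cor:sec23pathsspecial} follows from Proposition~\ref{prop:sec23pathsgeneral}; your remark that the matched-ascent condition does not depend on $\ell$ is also right, although strictly it depends on where the flat steps sit (they break descents), not only on the up- and down-steps. One caution, which comes from the paper rather than from your argument: the condition describes canonical forms only if a matched ascent means a maximal block of consecutive up-steps among those whose matching down-steps lie in a single descent (that is, a maximal unary chain), and not a whole ascent having this property, since under the latter reading $U_2U_1U_1HDHDD$, which encodes the non-canonical $P_2(P_1(P_1(\ast)\ast))$, has no matched ascent and so is counted, giving $293$ paths rather than $b^c_{2,1}(8)=289$.
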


\subsubsection{Binary trees with labeled right edges}

Consider again rooted ordered binary trees where  left edges are unlabeled and right edges are labeled by elements of $[d]$. The commutativity condition is reflected by monotonicity along the right-edge chains that correspond to maximal chains of unary operators.

\begin{proposition}\label{prop:sec33binary}
For every $n\ge1$, the number $b_{d,2}^c(2n)$ counts rooted ordered binary trees with $n$ vertices in which every right edge carries one of $d$ possible labels and, whenever
$v_1,\ldots,v_k$ is a path such that each $\{v_i,v_{i+1}\}$ is a right edge and each intermediate vertex $v_2,\ldots,v_{k-1}$ has no left child, the labels along that chain are weakly increasing.
\end{proposition}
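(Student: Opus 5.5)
We will restrict the bijection $f$ from the proof of Proposition~\ref{prop:sec23binary} to the subset $V_d^c$ and identify the image of the canonical representatives. Recall that $f(T)=f(T_L)\ast$ if the root has no right child, and $f(T)=f(T_L)P_i(f(T_R))$ if the root has a right edge labeled $i$. The bracketed length of $f(T)$ is $2n$ when $T$ has $n$ vertices, and $f$ is a bijection onto $V_d$ (with the empty tree mapped to the empty monomial). So it suffices to show that $f(T)\in V_d^c$ exactly when $T$ satisfies the stated chain condition. The count then follows from the definition of $b^c_{d,2}(2n)$.

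First we locate the unary chains. The operator $P_i$ created at a vertex $v$ (via its right edge labeled $i$ to a child $w$) is applied directly to $f(T_w)$. This argument is itself of the form $P_j(\cdot)$, that is, $P_i$ is immediately nested around another operator, if and only if $f(T_w)$ is an atom whose outermost symbol is a unary operator.

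By the recursion, $f(T_w)=f((T_w)_L)\,x$, where $x$ is either $\ast$ or $P_j(f(T_{w'}))$ for the right child $w'$ of $w$. The empty left subtree contributes nothing. Hence $f(T_w)$ is a single atom of the form $P_j(\cdots)$ precisely when $w$ has no left child and has a right child $w'$ with edge label $j$.

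Iterating this, a maximal chain $P_{i_1}(P_{i_2}(\cdots P_{i_{k-1}}(u)\cdots))$ of nested unary operators in $f(T)$ corresponds to a path $v_1,\dots,v_k$ of right edges labeled $i_1,\dots,i_{k-1}$. In this path, each intermediate vertex $v_2,\dots,v_{k-1}$ has no left child. Maximality at the bottom corresponds to $v_k$ having a left child or no right child. Conversely, every such path gives a nested run of operators.

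Now apply the description of $V_d^c$: a subword $P_i(P_j(\cdot))$ is allowed if and only if $i\le j$. By the previous step, every consecutive pair in such a tree path gives exactly such a subword. Hence the labels are weakly increasing along every such path if and only if each maximal unary chain of $f(T)$ is weakly increasing, that is, if and only if $f(T)\in V_d^c$ by the preceding lemma. Paths whose intermediate vertices include one with a left child are unconstrained, matching the fact that their operators are separated by a product.

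The main point needing care is the middle step: the identification ``$f(T_w)$ is an atom with a unary head $\iff$ $w$ has no left child and has a right child.'' This requires checking that a nonempty left subtree always produces a product of at least two atoms, because $f$ of a nonempty tree is nonempty. Once that is verified, the remaining steps are direct translations.
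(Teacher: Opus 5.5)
Your proposal is correct and takes the same approach as the paper: restrict the bijection $f$ from Proposition~\ref{prop:sec23binary} to the canonical representatives $V_d^c$. Beyond the paper's one-line proof, you verify the step it leaves implicit, namely that $P_i$ at $v$ is immediately nested around $P_j$ exactly when the right child of $v$ has no left child and has a right edge labeled $j$, which is what makes the stated chain condition equivalent to membership in $V_d^c$.
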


\begin{proof}
Use the same recursive map as in Proposition~\ref{prop:sec23binary}.  In the commuting case, every maximal chain of unary operators is first placed in canonical nondecreasing order, and under the binary-tree correspondence this is precisely the stated weak monotonicity condition along the corresponding right-edge chains.
\end{proof}

For comparison with the noncommutative case, note that the tree in Figure~\ref{fig:treetomonomial} is not represented in the present setting: the right-edge chain from the root to its grandchild has labels $2,1$, which are not weakly increasing, so this tree does not satisfy the condition in Proposition~\ref{prop:sec33binary}.

For example, Figure~\ref{fig:binarytrees} illustrates the 16 rooted ordered binary trees counted by $b_{3,2}^c(6)$. 

\begin{figure}[htbp]
\centering
\def\sc{0.5}
\def\ysc{0.8}
\begin{tabular}{cccccccc}

\begin{tikzpicture}[xscale=\sc, yscale=\ysc,thick,main node/.style={circle, inner sep=0.5mm, draw, font=\small\sffamily},edge label/.style={font=\scriptsize,  inner sep=0pt}]
\node[main node,fill] at (2,2) (0) {};
\node[main node] at (3,1) (1) {};
\node[main node] at (4,0) (2) {};
\path
(0) edge node[pos=0.5, edge label, above right] {$1$} (1)
(1) edge node[pos=0.5, edge label, above right] {$1$} (2);
\end{tikzpicture}
&

\begin{tikzpicture}[xscale=\sc, yscale=\ysc,thick,main node/.style={circle, inner sep=0.5mm, draw, font=\small\sffamily},edge label/.style={font=\scriptsize,  inner sep=0pt}]
\node[main node,fill] at (2,2) (0) {};
\node[main node] at (3,1) (1) {};
\node[main node] at (4,0) (2) {};
\path
(0) edge node[pos=0.5, edge label, above right] {$1$} (1)
(1) edge node[pos=0.5, edge label, above right] {$2$} (2);
\end{tikzpicture}
&

\begin{tikzpicture}[xscale=\sc, yscale=\ysc,thick,main node/.style={circle, inner sep=0.5mm, draw, font=\small\sffamily},edge label/.style={font=\scriptsize,  inner sep=0pt}]
\node[main node, fill] at (2,2) (0) {};
\node[main node] at (3,1) (1) {};
\node[main node] at (4,0) (2) {};
\path
(0) edge node[pos=0.5, edge label, above right] {$1$} (1)
(1) edge node[pos=0.5, edge label, above right] {$3$} (2);
\end{tikzpicture}
&

\begin{tikzpicture}[xscale=\sc, yscale=\ysc,thick,main node/.style={circle, inner sep=0.5mm, draw, font=\small\sffamily},edge label/.style={font=\scriptsize,  inner sep=0pt}]
\node[main node,fill] at (2,2) (0) {};
\node[main node] at (3,1) (1) {};
\node[main node] at (4,0) (2) {};
\path
(0) edge node[pos=0.5, edge label, above right] {$2$} (1)
(1) edge node[pos=0.5, edge label, above right] {$2$} (2);
\end{tikzpicture}
&

\begin{tikzpicture}[xscale=\sc, yscale=\ysc,thick,main node/.style={circle, inner sep=0.5mm, draw, font=\small\sffamily},edge label/.style={font=\scriptsize,  inner sep=0pt}]
\node[main node,fill] at (2,2) (0) {};
\node[main node] at (3,1) (1) {};
\node[main node] at (4,0) (2) {};
\path
(0) edge node[pos=0.5, edge label, above right] {$2$} (1)
(1) edge node[pos=0.5, edge label, above right] {$3$} (2);
\end{tikzpicture}
&
\begin{tikzpicture}[xscale=\sc, yscale=\ysc,thick,main node/.style={circle, inner sep=0.5mm, draw, font=\small\sffamily},edge label/.style={font=\scriptsize,  inner sep=0pt}]
\node[main node,fill] at (2,2) (0) {};
\node[main node] at (3,1) (1) {};
\node[main node] at (4,0) (2) {};
\path
(0) edge node[pos=0.5, edge label, above right] {$3$} (1)
(1) edge node[pos=0.5, edge label, above right] {$3$} (2);
\end{tikzpicture}
&
\begin{tikzpicture}[xscale=\sc, yscale=\ysc,thick,main node/.style={circle, inner sep=0.5mm, draw, font=\small\sffamily},edge label/.style={font=\scriptsize,  inner sep=0pt}]
\node[main node,fill] at (2,2) (0) {};
\node[main node] at (1,1) (1) {};
\node[main node] at (3,1) (2) {};
\node at (2,0) (3) {};
\path
(0) edge (1)
(0) edge node[pos=0.5, edge label, above right] {$1$} (2);
\end{tikzpicture}
&
\begin{tikzpicture}[xscale=\sc, yscale=\ysc,thick,main node/.style={circle, inner sep=0.5mm, draw, font=\small\sffamily},edge label/.style={font=\scriptsize,  inner sep=0pt}]
\node[main node,fill] at (2,2) (0) {};
\node[main node] at (1,1) (1) {};
\node[main node] at (3,1) (2) {};
\node at (2,0) (3) {};
\path
(0) edge (1)
(0) edge node[pos=0.5, edge label, above right] {$2$} (2);
\end{tikzpicture}
\\
\begin{tikzpicture}[xscale=\sc, yscale=\ysc,thick,main node/.style={circle, inner sep=0.5mm, draw, font=\small\sffamily},edge label/.style={font=\scriptsize,  inner sep=0pt}]
\node[main node, fill] at (2,2) (0) {};
\node[main node] at (1,1) (1) {};
\node[main node] at (3,1) (2) {};
\node at (2,0) (3) {};
\path
(0) edge (1)
(0) edge node[pos=0.5, edge label, above right] {$3$} (2);
\end{tikzpicture}
&

\begin{tikzpicture}[xscale=\sc, yscale=\ysc,thick,main node/.style={circle, inner sep=0.5mm, draw, font=\small\sffamily},edge label/.style={font=\scriptsize,  inner sep=0pt}]
\node[main node,fill] at (2,2) (0) {};
\node[main node] at (1,1) (1) {};
\node[main node] at (2,0) (2) {};
\path
(0) edge (1)
(1) edge node[pos=0.5, edge label, above right] {$1$} (2);
\end{tikzpicture}
&

\begin{tikzpicture}[xscale=\sc, yscale=\ysc,thick,main node/.style={circle, inner sep=0.5mm, draw, font=\small\sffamily},edge label/.style={font=\scriptsize,  inner sep=0pt}]
\node[main node,fill] at (2,2) (0) {};
\node[main node] at (1,1) (1) {};
\node[main node] at (2,0) (2) {};
\path
(0) edge (1)
(1) edge node[pos=0.5, edge label, above right] {$2$} (2);
\end{tikzpicture}
&

\begin{tikzpicture}[xscale=\sc, yscale=\ysc,thick,main node/.style={circle, inner sep=0.5mm, draw, font=\small\sffamily},edge label/.style={font=\scriptsize,  inner sep=0pt}]
\node[main node,fill] at (2,2) (0) {};
\node[main node] at (1,1) (1) {};
\node[main node] at (2,0) (2) {};
\path
(0) edge (1)
(1) edge node[pos=0.5, edge label, above right] {$3$} (2);
\end{tikzpicture}
&
\begin{tikzpicture}[xscale=\sc, yscale=\ysc,thick,main node/.style={circle, inner sep=0.5mm, draw, font=\small\sffamily},edge label/.style={font=\scriptsize,  inner sep=0pt}]
\node[main node,fill] at (2,2) (0) {};
\node[main node] at (3,1) (1) {};
\node[main node] at (2,0) (2) {};
\path
(1) edge (2)
(0) edge node[pos=0.5, edge label, above right] {$1$} (1);
\end{tikzpicture}
&
\begin{tikzpicture}[xscale=\sc, yscale=\ysc,thick,main node/.style={circle, inner sep=0.5mm, draw, font=\small\sffamily},edge label/.style={font=\scriptsize,  inner sep=0pt}]
\node[main node,fill] at (2,2) (0) {};
\node[main node] at (3,1) (1) {};
\node[main node] at (2,0) (2) {};
\path
(1) edge (2)
(0) edge node[pos=0.5, edge label, above right] {$2$} (1);
\end{tikzpicture}
&
\begin{tikzpicture}[xscale=\sc, yscale=\ysc,thick,main node/.style={circle, inner sep=0.5mm, draw, font=\small\sffamily},edge label/.style={font=\scriptsize,  inner sep=0pt}]
\node[main node,fill] at (2,2) (0) {};
\node[main node] at (3,1) (1) {};
\node[main node] at (2,0) (2) {};
\path
(1) edge (2)
(0) edge node[pos=0.5, edge label, above right] {$3$} (1);
\end{tikzpicture}
&
\begin{tikzpicture}[xscale=\sc, yscale=\ysc,thick,main node/.style={circle, inner sep=0.5mm, draw, font=\small\sffamily},edge label/.style={font=\scriptsize,  inner sep=0pt}]
\node[main node,fill] at (2,2) (0) {};
\node[main node] at (1,1) (1) {};
\node[main node] at (0,0) (2) {};
\path
(0) edge (1)
(1) edge (2);
\end{tikzpicture}
\end{tabular}
\caption{The 16 rooted ordered binary trees counted by $b_{3,2}^c(6)$.
Each right edge is labeled by an element of $\{1,2,3\}$, and the labels are weakly
increasing along every maximal right-edge chain whose intermediate vertices have no left child.}\label{fig:binarytrees}
\end{figure}
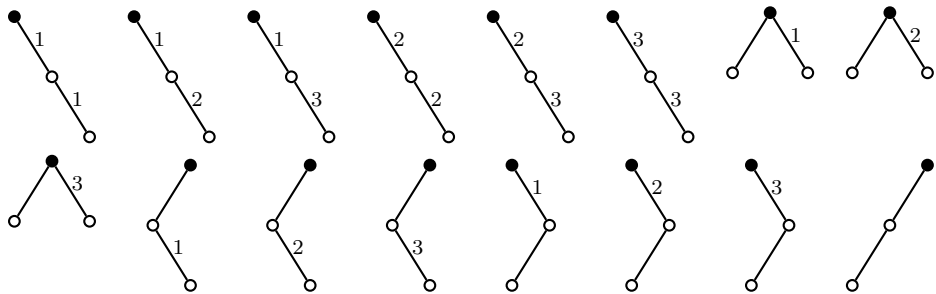

We remark that, when $d=1$, all of the weak monotonicity conditions in
Propositions~\ref{prop:sec33trees}, \ref{prop:sec33pathsgeneral}, and
\ref{prop:sec33binary} are vacuous.  Hence the commuting and noncommuting models
coincide in this case, as expected.

\section{The commutative multiplication extension}\label{sec4}

We now turn to the case in which the binary multiplication of monomials is
associative \emph{and commutative}. (The unary operators may or may not commute.)
In the previous two sections, where multiplication was noncommutative, every monomial factored uniquely as an ordered sequence of atoms. In the present setting, multiplication is commutative, so each monomial factors uniquely as a multiset of atoms. Accordingly, the geometric-series relation
\[
A=\frac{\overline A}{1-\overline A}
\]
from Sections~\ref{sec2} and~\ref{sec3} is replaced by the standard multiset, or
exp--log, construction from unlabeled combinatorics.

As in the earlier sections, we keep $\ast$ as the abstract indeterminate.
To distinguish the present variant from the noncommutative-multiplication cases, we use a superscript $m$ when multiplication is commutative and the unary operators are noncommutative, and a superscript $cm$ when the unary operators also commute.

\subsection{Noncommuting unary operators}\label{sec41}

We first assume that the unary operators $P_1,\ldots,P_d$ do \emph{not} commute with one another, but that the binary multiplication is commutative. Let
\[
a_d^m(r;\mathbf{s})
\]
denote the number of inequivalent monomials in $V_d$ with degree $r$ and operator
multiplicity vector $\mathbf{s}=(s_1,\ldots,s_d)$. For example,  in Figure~\ref{fig:a221boxes}, monomials in the same row within the same box are equivalent when multiplication is commutative. Then counting the total number of rows within boxes gives $a_2^m(2;(2,1)) = 17$. Next, define
\[
A_d^m(u; \mathbf{p})
=
\sum_{r\ge1}\;\sum_{\mathbf{s}\ge\mathbf{0}}
a_d^m(r;\mathbf{s})\,u^r p_1^{s_1}\cdots p_d^{s_d}.
\]
Likewise, let $\overline a_d^m(r;\mathbf{s})$ and $\overline A_d^m(u;\mathbf{p})$ denote the corresponding counts and generating function for atoms. Exactly as in Section~\ref{sec2}, an atom is either the single indeterminate $\ast$ or a
monomial of the form $P_i(v)$. Hence
\begin{equation}\label{eq:sec4ncatom}
\overline A_d^m(u;\mathbf{p})
=u+(p_1+\cdots+p_d)A_d^m(u;\mathbf{p}).
\end{equation}

We next express $A_d^m$ in terms of $\overline A_d^m$. The essential new feature is that, because the product is now commutative, every monomial factors uniquely as a multiset of atoms. This places the problem in the standard exp--log framework for unlabeled multiset constructions. We briefly sketch the derivation; for a fuller treatment of this construction, see Flajolet and Sedgewick~\cite[Section~VII.2]{FlajoletS09}.

Let $v_1,v_2,\ldots$ be the list of all atoms in $V_d$. Since multiplication is
commutative, every nontrivial monomial $v\in V_d$ can be written uniquely in the form
\[
v=v_1^{t_1}v_2^{t_2}\cdots,
\]
where $t_1,t_2,\ldots\in\mathbb Z_{\ge0}$ and not all $t_i$ are zero. If an atom $v_i$
has degree $r$ and multiplicity vector $\mathbf{s}$, then its contribution to $A_d^m$ is
\[
\sum_{t_i\ge0}(u^r p_1^{s_1}\cdots p_d^{s_d})^{t_i}
=\frac{1}{1-u^r p_1^{s_1}\cdots p_d^{s_d}}.
\]
Multiplying over all atom types therefore gives
\begin{equation}\label{eq:vmultisetproduct}
1+A_d^m(u;\mathbf{p})
=
\prod_{r\ge1}\;\prod_{\mathbf{s}\ge \mathbf{0}}
\left(1-u^r p_1^{s_1} \cdots p_d^{s_d}\right)^{-\overline a_d^m(r;\mathbf{s})}.
\end{equation}
Taking the logarithm of the right-hand side and using the expansion $-\log(1-x)=\sum_{j\ge1} \frac{x^j}{j}$, we obtain the equivalent form
\begin{equation}\label{eq:vmultisetexp}
1+A_d^m(u;\mathbf{p})
=
\exp\!\left(
\sum_{j\ge1}
\frac{
\overline A_d^m(u^j;p_1^j,\ldots,p_d^j)
}{j}
\right).
\end{equation}
Substituting~\eqref{eq:sec4ncatom} into~\eqref{eq:vmultisetexp} gives the following master equation
\begin{equation}\label{eq:sec4ncmaster}
1+A_d^m(u;\mathbf{p})
=
\exp\!\left(
\sum_{j\ge1}
\frac{u^j+(p_1^j+\cdots+p_d^j)A_d^m(u^j;p_1^j,\ldots,p_d^j)}{j}
\right).
\end{equation}

As in the previous sections, the multivariate functional equation yields an effective
coefficient recurrence. By \eqref{eq:sec4ncatom}, we have
\[
\overline a_d^m(r;\mathbf{s})
=
\delta_{r,1}\,\delta_{\mathbf{s},\mathbf{0}}
+
\sum_{i=1}^d a_d^m(r;\mathbf{s}-\mathbf e_i),
\]
where $\mathbf e_i$ is the $i$th standard basis vector and, as usual, terms with negative
coordinates are interpreted as $0$.

Applying the Euler-transform argument gives the following multigraded recurrence.

\begin{theorem}\label{thm:sec4ncmultirec}
Let $a_d^m(0;\mathbf{0})=1$ and $a_d^m(0;\mathbf{s})=0$ for $\mathbf{s}\neq\mathbf{0}$.
For $r\ge1$ and $\mathbf{s}\in\mathbb{Z}_{\ge0}^d$, define
\[
c_d^m(r;\mathbf{s})
:=
\sum_{\substack{j \mid \gcd(r,s_1,\ldots,s_d)}} 
\frac{r}{j} \overline a_d^m\!\left(\frac{r}{j};\frac{\mathbf{s}}{j}\right),
\]
where the sum ranges over all positive integers $j$ dividing every coordinate of
$(r,s_1,\ldots,s_d)$.
Then
\begin{equation}\label{eq:sec4ncmultirec}
r\,a_d^m(r;\mathbf{s})
=
\sum_{\substack{1\le j\le r\\ \mathbf{0} \le \boldsymbol{\alpha}\le\mathbf{s}}}
c_d^m(j;\boldsymbol{\alpha})\,
a_d^m(r-j;\mathbf{s}-\boldsymbol{\alpha}),
\end{equation}
with all vector inequalities interpreted coordinatewise.
\end{theorem}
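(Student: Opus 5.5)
The plan is to derive \eqref{eq:sec4ncmultirec} from the exp--log form \eqref{eq:vmultisetexp} by logarithmic differentiation with respect to $u$. This is the standard proof of the Euler-transform recurrence, carried out here with multigraded bookkeeping.

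Set $F(u;\mathbf{p}):=1+A_d^m(u;\mathbf{p})$. With the convention $a_d^m(0;\mathbf{0})=1$ and $a_d^m(0;\mathbf{s})=0$ for $\mathbf{s}\neq\mathbf{0}$, we have $F=\sum_{r\ge0}\sum_{\mathbf{s}\ge\mathbf{0}} a_d^m(r;\mathbf{s})u^r\mathbf{p}^{\mathbf{s}}$. Equation \eqref{eq:vmultisetexp} says $\log F=\sum_{j\ge1}\overline A_d^m(u^j;\mathbf{p}^j)/j$, which is legitimate as formal power series since $\overline A_d^m$ has no constant term: every atom has degree at least $1$. Apply the operator $u\,\partial_u$ to both sides. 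The left side gives $uF'/F$. On the right, each summand has the form
\[
\frac1j\sum_{r,\mathbf{s}}\overline a_d^m(r;\mathbf{s})\,u^{jr}\mathbf{p}^{j\mathbf{s}},
\]
and applying $u\partial_u$ turns it into $\sum_{r,\mathbf{s}} r\,\overline a_d^m(r;\mathbf{s})\,u^{jr}\mathbf{p}^{j\mathbf{s}}$, with the factor $1/j$ cancelled.

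Next, collect the coefficient of $u^{m}\mathbf{p}^{\boldsymbol{\alpha}}$ in this right-hand side, which I will call $C(u;\mathbf{p})$. A pair $(j,(r,\mathbf{s}))$ contributes exactly when $m=jr$ and $\boldsymbol{\alpha}=j\mathbf{s}$. That is, $j$ ranges over the common divisors of $(m,\alpha_1,\ldots,\alpha_d)$, with $r=m/j$ and $\mathbf{s}=\boldsymbol{\alpha}/j$. The resulting coefficient is $\sum_{j\mid\gcd(m,\boldsymbol{\alpha})}\frac{m}{j}\,\overline a_d^m(m/j;\boldsymbol{\alpha}/j)$, which is precisely $c_d^m(m;\boldsymbol{\alpha})$. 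Components $\alpha_i=0$ are divisible by every $j$, so the gcd convention in the statement is consistent. Since there are no atoms of degree $0$, $C$ has no $u^0$ terms.

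Finally, clear the denominator: $u\,\partial_u F=C\cdot F$. Extract the coefficient of $u^r\mathbf{p}^{\mathbf{s}}$ for $r\ge1$. The left side is $r\,a_d^m(r;\mathbf{s})$. The right side is a Cauchy product over both $u$ and $\mathbf{p}$, giving $\sum_{j=1}^{r}\sum_{\mathbf{0}\le\boldsymbol{\alpha}\le\mathbf{s}} c_d^m(j;\boldsymbol{\alpha})\,a_d^m(r-j;\mathbf{s}-\boldsymbol{\alpha})$. Here $j\ge1$ because $C$ has no $u^0$ term, and the $j=r$ terms use the boundary convention $a_d^m(0;\cdot)$. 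This is exactly \eqref{eq:sec4ncmultirec}.

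I expect the main obstacle to be the divisor bookkeeping in the second step, where the $j$-fold substitution $\mathbf{p}\mapsto\mathbf{p}^j$ must be matched with the simultaneous divisibility of $r$ and all $s_i$. A secondary point is to make sure we differentiate in $u$ rather than in the $p_i$. This choice is what produces the weight $r$ on the left and $r/j$ inside $c_d^m$, and it is valid because every atom, and hence every nonconstant monomial, has positive degree.
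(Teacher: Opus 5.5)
Your proposal is correct and follows essentially the same route as the paper: you differentiate the multiset identity logarithmically in $u$, use the divisor bookkeeping to identify the coefficients as $c_d^m$, then clear the denominator $1+A_d^m$ and compare coefficients. The only cosmetic difference is that you start from the exp form \eqref{eq:vmultisetexp}, whereas the paper starts from the product form \eqref{eq:vmultisetproduct} and expands $x/(1-x)$ as a geometric series; this is the same computation.
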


\begin{proof}
For brevity, we write $A(u;\mathbf{p}) := A_d^m(u;\mathbf{p})$ throughout the proof. From the multiset product formula~\eqref{eq:vmultisetproduct}, we have
\begin{equation}\label{eq:sec4ncproofprod}
1+A(u;\mathbf{p})
=
\prod_{r\ge1}\;\prod_{\mathbf{s}\ge\mathbf{0}}
\left(1-u^r p_1^{s_1}\cdots p_d^{s_d}\right)^{-\overline a_d^m(r;\mathbf{s})},
\end{equation}
Taking logarithms of both sides of \eqref{eq:sec4ncproofprod}, we obtain
\[
\log(1+A(u;\mathbf{p}))
=
-\sum_{r\ge1}\;\sum_{\mathbf{s}\ge\mathbf{0}}
\overline a_d^m(r;\mathbf{s})
\log\!\left(1-u^r p_1^{s_1}\cdots p_d^{s_d}\right).
\]
Now differentiate with respect to $u$ and multiply by $u$.  This gives
\begin{align}
\frac{u\,\partial A/\partial u}{1+A(u;\mathbf{p})}
&=
\sum_{r\ge1}\;\sum_{\mathbf{s}\ge\mathbf{0}}
\overline a_d^m(r;\mathbf{s})\,
\frac{r\,u^r p_1^{s_1}\cdots p_d^{s_d}}
{1-u^r p_1^{s_1}\cdots p_d^{s_d}} \notag\\
&=
\sum_{r\ge1}\;\sum_{\mathbf{s}\ge\mathbf{0}}
\overline a_d^m(r;\mathbf{s})
\sum_{j\ge1}
r\,u^{jr}p_1^{js_1}\cdots p_d^{js_d},
\label{eq:sec4ncprooflogder}
\end{align}
where in the second step we used the geometric-series identity 
$
\frac{x}{1-x}=\sum_{j\ge1}x^j.
$
Next, we collect the coefficient of a fixed monomial
\[
u^n p_1^{\beta_1}\cdots p_d^{\beta_d}.
\]
Such a monomial appears on the right-hand side of \eqref{eq:sec4ncprooflogder}
precisely when
\[
n=jr,
\qquad
\beta_i=js_i \quad (1\le i\le d),
\]
that is, when $j$ divides every coordinate of $(n,\beta_1,\ldots,\beta_d)$.
Hence we may rewrite \eqref{eq:sec4ncprooflogder} as
\[
\frac{u\,\partial A/\partial u}{1+A(u;\mathbf{p})}
=
\sum_{n\ge1}\;\sum_{\boldsymbol{\beta}\ge\mathbf{0}}
c_d^m(n;\boldsymbol{\beta})\,
u^n p_1^{\beta_1}\cdots p_d^{\beta_d},
\]
where
\[
c_d^m(n;\boldsymbol{\beta})
=
\sum_{j\mid\gcd(n,\beta_1,\ldots,\beta_d)}
\frac{n}{j}\,
\overline a_d^m\!\left(\frac{n}{j};\frac{\boldsymbol{\beta}}{j}\right).
\]
Multiplying both sides by $1+A(u;\mathbf{p})$, we obtain
\[
u\,\frac{\partial A}{\partial u}
=
\bigl(1+A(u;\mathbf{p})\bigr)
\sum_{n\ge1}\;\sum_{\boldsymbol{\beta}\ge\mathbf{0}}
c_d^m(n;\boldsymbol{\beta})\,
u^n p_1^{\beta_1}\cdots p_d^{\beta_d}.
\]

Finally, expand both sides in coefficients.  By definition,
\[
A(u;\mathbf{p})
=
\sum_{r\ge1}\;\sum_{\mathbf{s}\ge\mathbf{0}}
a_d^m(r;\mathbf{s})\,u^r p_1^{s_1}\cdots p_d^{s_d},
\]
and therefore
\[
u\,\frac{\partial A}{\partial u}
=
\sum_{r\ge1}\;\sum_{\mathbf{s}\ge\mathbf{0}}
r\,a_d^m(r;\mathbf{s})\,u^r p_1^{s_1}\cdots p_d^{s_d}.
\]
Also, using the initial conditions
\[
a_d^m(0;\mathbf{0})=1,
\qquad
a_d^m(0;\mathbf{s})=0 \quad (\mathbf{s}\neq\mathbf{0}),
\]
we may write
\[
1+A(u;\mathbf{p})
=
\sum_{r\ge0}\;\sum_{\mathbf{s}\ge\mathbf{0}}
a_d^m(r;\mathbf{s})\,u^r p_1^{s_1}\cdots p_d^{s_d}.
\]
Comparing the coefficient of
$u^r p_1^{s_1}\cdots p_d^{s_d}$ on both sides now gives
\[
r\,a_d^m(r;\mathbf{s})
=
\sum_{\substack{1\le j\le r\\ \mathbf{0}\le\boldsymbol{\alpha}\le\mathbf{s}}}
c_d^m(j;\boldsymbol{\alpha})\,
a_d^m(r-j;\mathbf{s}-\boldsymbol{\alpha}),
\]
where all vector inequalities are interpreted coordinatewise.  This is exactly
\eqref{eq:sec4ncmultirec}.
\end{proof}

\begin{table}[ht]
\centering
\begin{tabular}{c|l|l}
$r$ & First terms of $a_1^m(r;k)$ & OEIS entry \\
\hline
\\[-12pt]
1& $1,1,1,1,1,1,1,\ldots $& \seqnum{A000012}\\
2& $1,2,4,6,9,12,16,20,25, 30, 36, 42, 49, 56, 64, 72, 81, 90, 100, 110, 121,\ldots $& \seqnum{A002620}\\
3& $1,3,8,18,35,62,103,161,241, 348, 487, 664, 886, 1159, 1491, 1890,\ldots $& \seqnum{A055278}\\
4& $1,4,14,39,97,212,429,804,1427, 2406, 3900, 6094, 9245, 13645,\ldots $& \seqnum{A055279}\\
5& $1,5,21,72,214,563,1344,2958,6086, 11820, 21854, 38713, 66069,\ldots $& \seqnum{A055280}
\end{tabular}
\caption{First terms of the sequences $a_1^m(r;k)$ for $r = 1,2,3,4,5$.}
\label{tab:a1mr}
\end{table}

Table~\ref{tab:a1mr} lists the initial terms of the sequences $a_1^m(r;k)$ for $r \in \{1,2,3,4,5\}$. 
In all these cases, we obtain known sequences corresponding to rooted unordered trees with a prescribed number of leaves. Thus, in the commutative-multiplication setting, the one-operator refinement $a_1^m(r;k)$ is no longer governed by the Narayana numbers, but instead gives the classical rooted-tree-by-leaves triangle (\seqnum{A055277}). 

This rooted-tree interpretation is also natural from the tree model of
Section~\ref{sec2}. There, multiplication is represented by an internal node whose
children record the factors from left to right. Once multiplication is made commutative,
these children become unordered. Thus the rooted ordered trees of the noncommutative setting collapse to rooted unordered trees, which explains the appearance of the rooted-tree sequences in the case of $d=1$. For general $d$, we obtain operator-colored analogs of rooted unordered trees.

As before, we define from $a_d^m(r;\mathbf{s})$ length-graded sequences
\[
b_{d,\ell}^m(n) := 
\sum_{\substack{r\ge1,\;\mathbf{s}\ge\mathbf{0}\\ \ell r+2|\mathbf{s}|=n}}
a_d^m(r;\mathbf{s})
\]
for every $d, \ell \geq 1$. Then the generating function $B_{d,\ell}^m(z):=\sum_{n\ge1} b_{d,\ell}^m(n)z^n$ satisfies
\[
B_{d,\ell}^m(z)=A_d^m(z^{\ell} ;z^2,\ldots,z^2).
\]
Specializing the multiset master equation~\eqref{eq:sec4ncmaster} yields an effective
one-variable recurrence for the length-graded sequences.

\begin{theorem}\label{thm:sec4bdlmrec}
For $n\ge1$, define
\[
\overline b_{d,\ell}^m(n)
:=
\delta_{n,\ell}
+
d\,b_{d,\ell}^m(n-2),
\]
with the convention $b_{d,\ell}^m(j)=0$ for $j\le0$. Next define
\[
c_{d,\ell}^m(n)
:=
\sum_{k\mid n} k\,\overline b_{d,\ell}^m(k).
\]
Then, for every $n\ge1$,
\begin{equation}\label{eq:sec4bdlmrec}
n b_{d,\ell}^m(n)
=
c_{d,\ell}^m(n)
+
\sum_{k=1}^{n-1} c_{d,\ell}^m(k)\,b_{d,\ell}^m(n-k).
\end{equation}
\end{theorem}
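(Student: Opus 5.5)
The plan is to specialize the multiset master equation \eqref{eq:vmultisetexp} directly and then repeat the logarithmic-derivative argument from the proof of Theorem~\ref{thm:sec4ncmultirec} in one variable. First, set $u=z^\ell$ and $p_1=\cdots=p_d=z^2$ in \eqref{eq:sec4ncatom}. This gives the atom generating function
\[
\overline B(z):=\overline A_d^m(z^\ell;z^2,\ldots,z^2)=z^\ell+dz^2B_{d,\ell}^m(z).
\]
Its coefficient of $z^n$ is exactly $\delta_{n,\ell}+d\,b_{d,\ell}^m(n-2)=\overline b_{d,\ell}^m(n)$. In other words, $\overline b_{d,\ell}^m(n)$ counts atoms of total length $n$. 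The specialization is legitimate because $u^jp_i^j$ becomes $(z^j)^\ell(z^j)^2$, so substituting $z\mapsto z^j$ commutes with the power substitution. Hence \eqref{eq:vmultisetexp} specializes to
\[
1+B(z)=\exp\Bigl(\sum_{j\ge1}\tfrac{1}{j}\overline B(z^j)\Bigr),
\]
where $B=B_{d,\ell}^m$.

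Equivalently, I would specialize the product form \eqref{eq:vmultisetproduct}. Grouping atoms by total length $k=\ell r+2|\mathbf{s}|$ gives
\[
1+B(z)=\prod_{k\ge1}(1-z^k)^{-\overline b_{d,\ell}^m(k)}.
\]
This is the cleanest starting point, since it is a standard Euler transform. Taking logarithms, applying $z\,\frac{d}{dz}$, and expanding $\frac{kz^k}{1-z^k}=\sum_{m\ge1}kz^{km}$ yields
\[
\frac{zB'(z)}{1+B(z)}=\sum_{n\ge1}\Bigl(\sum_{k\mid n}k\,\overline b_{d,\ell}^m(k)\Bigr)z^n=\sum_{n\ge1}c_{d,\ell}^m(n)z^n.
\]
Multiplying through by $1+B(z)$ and comparing coefficients of $z^n$ gives
\[
nb_{d,\ell}^m(n)=c_{d,\ell}^m(n)+\sum_{k=1}^{n-1}c_{d,\ell}^m(k)\,b_{d,\ell}^m(n-k).
\]
The term $c(n)$ comes from the constant $1$ in $1+B$, and the sum stops at $k=n-1$ because $b(0)$ is not included in $B$. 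This is exactly \eqref{eq:sec4bdlmrec}.

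The only point requiring care, and the step I expect to be the main obstacle, is justifying the regrouping of the multivariate product by length. Distinct monomials $u^rp^{\mathbf{s}}$ with the same length $\ell r+2|\mathbf{s}|$ all map to the same $z^k$. Their factors therefore combine as $\prod(1-z^k)^{-\overline a}$, and the exponents $\overline a_d^m(r;\mathbf{s})$ sum to $\overline b_{d,\ell}^m(k)$, which is a finite sum for each $k$. I will also note that $\ell\ge1$ ensures every atom has positive length, so all the formal power series manipulations are well defined. Finally, the recurrence is effective: $c_{d,\ell}^m(n)$ involves only $\overline b_{d,\ell}^m(k)$ for $k\le n$, hence only $b_{d,\ell}^m$ at indices $\le n-2$.
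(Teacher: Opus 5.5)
Your proposal is correct and follows essentially the same route as the paper. You specialize the atom relation to $\overline B_{d,\ell}^m(z)=z^\ell+dz^2B_{d,\ell}^m(z)$, specialize the multiset identity, take the logarithmic derivative to obtain $c_{d,\ell}^m(n)$, and compare coefficients after multiplying by $1+B_{d,\ell}^m(z)$. Starting from the product form instead of the exp form is an equivalent choice, and your justification of regrouping atoms by length makes explicit a step the paper leaves implicit.
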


\begin{proof}
Define $\overline B_{d,\ell}^m(z) :=\sum_{n\ge1}\overline b_{d,\ell}^m(n)z^n$. Then
\[
\overline B_{d,\ell}^m(z):=z^\ell+d z^2 B_{d,\ell}^m(z).
\]
Specializing~\eqref{eq:sec4ncmaster} then gives
\[
1+B_{d,\ell}^m(z)
=
\exp\!\left(
\sum_{j\ge1}\frac{\overline B_{d,\ell}^m(z^j)}{j}
\right).
\]
Taking logarithms, differentiating with respect to $z$, and multiplying by $z$, we obtain
\[
\frac{z(B_{d,\ell}^m)'(z)}{1+B_{d,\ell}^m(z)}
=
\sum_{n\ge1}\left(\sum_{k\mid n} k\,\overline b_{d,\ell}^m(k)\right)z^n
=
\sum_{n\ge1} c_{d,\ell}^m(n)z^n.
\]
Multiplying both sides by $1+B_{d,\ell}^m(z)$ and comparing coefficients of $z^n$
gives~\eqref{eq:sec4bdlmrec}.
\end{proof}

Table~\ref{tab:bmd2} lists the initial terms of the sequences $b_{d,2}^m(2n)$ for $d \leq 4$.

\begin{table}[ht]
\centering
\begin{tabular}{c|l|l}
$d$ & First terms of $b_{d,2}^m(2n)$ & OEIS entry \\
\hline
\\[-12pt]
1 & $1,2,4,9,20,48,115,286,719,1842,4766, 12486, 32973, \ldots$ & \seqnum{A000081} \\
2 & $1,3,9,30,102,367,1347,5081,19491,75960,299622,\ldots$ & \seqnum{A394948}\textsuperscript{*}\\
3 & $1,4,16,70,316,1496,7262,36125,182892,939930,\ldots$ & \seqnum{A394949}\textsuperscript{*}\\
4 & $1,5,25,135,755,4405,26385,161730,1008870,6385736,\ldots$ & \seqnum{A394950}\textsuperscript{*}\\
\end{tabular}
\caption{First terms of the sequences $b_{d,2}^m(2n)$ for $d=1,2,3,4$.}
\label{tab:bmd2}
\end{table}

Unsurprisingly, we obtain the rooted-tree sequence \seqnum{A000081} (shifted
by one place) for $d=1$, which is exactly the row sums of the aforementioned array \seqnum{A055277}. For general $d$, the monomials again correspond naturally to rooted unordered trees in which unary nodes are assigned a label in $[d]$. 

\subsection{Commuting unary operators}\label{sec42}

We now consider the case in which the $d$ unary operators commute pairwise and the
binary multiplication is also commutative. Let
\[
a_d^{cm}(r;\mathbf{s})
\]
be the number of equivalence classes of monomials with degree $r$ and multiplicity vector $\mathbf{s}$ in this case. For example,  in Figure~\ref{fig:a221boxes}, monomials within the same box now all belong to the same equivalence class, which gives $a_2^{cm}(2;(2,1)) = 10$. Next, let
\[
A_d^{cm}(u;\mathbf{p})
:=
\sum_{r\ge1}\;\sum_{\mathbf{s}\ge\mathbf{0}}
a_d^{cm}(r;\mathbf{s})\,u^r p_1^{s_1}\cdots p_d^{s_d}
\]
denote the corresponding multigraded counting function. Because the unary operators commute, the same argument as in Lemma~\ref{lem:sec3atom} applies, and we obtain that
\begin{equation}\label{eq:sec4cmatom}
\overline A_d^{cm}(u;\mathbf{p})
=u+\left(1-\prod_{i=1}^d(1-p_i)\right)A_d^{cm}(u;\mathbf{p}).
\end{equation}
Since multiplication is also commutative in this case, the multiset-of-atoms argument that led to~\eqref{eq:vmultisetexp} also applies here, which implies
\begin{equation}\label{eq:vmultisetexpcm}
1+A_d^{cm}(u;\mathbf{p})
=
\exp\!\left(
\sum_{j\ge1}
\frac{
\overline A_d^{cm}(u^j;p_1^j,\ldots,p_d^j)
}{j}
\right).
\end{equation}
Combining~\eqref{eq:sec4cmatom} with~\eqref{eq:vmultisetexpcm}, we obtain
\begin{equation}\label{eq:sec4cmmaster}
1+A_d^{cm}(u;\mathbf{p})
=
\exp\!\left(
\sum_{j \ge1}
\frac{u^j+\left(1-\prod_{i=1}^d(1-p_i^j)\right)A_d^{cm}(u^j;p_1^j,\ldots,p_d^j)}{j}
\right).
\end{equation}
As in the previous subsection, this gives an effective recursive scheme for computing 
$a_d^{cm}(r;\mathbf{s})$.  Making this explicit, we obtain the following multigraded recurrence by the same logarithmic-derivative argument used in Theorem~\ref{thm:sec4ncmultirec}.

\begin{theorem}\label{thm:sec4cmmultirec}
Let $a_d^{cm}(0;\mathbf{0})=1$ and $a_d^{cm}(0;\mathbf{s})=0$ for
$\mathbf{s}\neq\mathbf{0}$. For $r\ge1$ and
$\mathbf{s}\in\mathbb{Z}_{\ge0}^d$, define
\[
\overline a_d^{cm}(r;\mathbf{s})
:=
\delta_{r,1}\,\delta_{\mathbf{s},\mathbf{0}}
+
\sum_{\emptyset\neq J\subseteq[d]}
(-1)^{|J|+1}
a_d^{cm}(r;\mathbf{s}-\mathbf e_J),
\]
where $\mathbf e_J\in\{0,1\}^d$ denotes the indicator vector of $J$, and terms with
negative coordinates are interpreted as $0$. Next define
\[
c_d^{cm}(r;\mathbf{s})
:=
\sum_{\substack{j\mid \gcd(r,s_1,\ldots,s_d)}}
\frac{r}{j}\,
\overline a_d^{cm}\!\left(\frac{r}{j};\frac{\mathbf{s}}{j}\right).
\]
Then
\begin{equation}\label{eq:sec4cmmultirec}
r\,a_d^{cm}(r;\mathbf{s})
=
\sum_{\substack{1\le j\le r\\ \mathbf{0}\le \boldsymbol{\alpha}\le \mathbf{s}}}
c_d^{cm}(j;\boldsymbol{\alpha})\,
a_d^{cm}(r-j;\mathbf{s}-\boldsymbol{\alpha}),
\end{equation}
where all vector inequalities are interpreted coordinatewise.
\end{theorem}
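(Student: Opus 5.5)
The plan is to repeat the logarithmic-derivative argument from the proof of Theorem~\ref{thm:sec4ncmultirec}, using the commuting-unary atom relation~\eqref{eq:sec4cmatom} in place of~\eqref{eq:sec4ncatom}.

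First, I would check that the quantity $\overline a_d^{cm}(r;\mathbf{s})$ defined in the statement really is the number of atoms of degree $r$ and multiplicity $\mathbf{s}$ in the commuting--commutative setting. To do this, expand
\[
1-\prod_{i=1}^d(1-p_i)=\sum_{\emptyset\neq J\subseteq[d]}(-1)^{|J|+1}\mathbf{p}^{\mathbf e_J},
\]
exactly as before Theorem~\ref{thm:sec3multirec}, and extract the coefficient of $u^r\mathbf{p}^{\mathbf{s}}$ from~\eqref{eq:sec4cmatom}. The term $u$ contributes $\delta_{r,1}\delta_{\mathbf{s},\mathbf{0}}$. The remaining terms contribute the signed sum of shifted values $a_d^{cm}(r;\mathbf{s}-\mathbf e_J)$, with out-of-range terms set to zero.

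The one point needing care is that~\eqref{eq:sec4cmatom} is correct under commutative multiplication. The derivation of Lemma~\ref{lem:sec3atom} used the identity $\widetilde A=u+A-\overline A$. That identity still holds here, because non-unary roots are $\ast$ or products of at least two atoms, whether the products are ordered or not. It is the only ingredient of the lemma that is needed, so I would simply cite the paper's statement of~\eqref{eq:sec4cmatom}.

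Next, I would write the multiset decomposition in product form. Since every monomial factors uniquely as a multiset of atoms, and~\eqref{eq:vmultisetexpcm} is equivalent to this, we get
\[
1+A_d^{cm}(u;\mathbf{p})=\prod_{r\ge1}\prod_{\mathbf{s}\ge\mathbf{0}}\bigl(1-u^r\mathbf{p}^{\mathbf{s}}\bigr)^{-\overline a_d^{cm}(r;\mathbf{s})}.
\]
Here the exponents are nonnegative integers, as they must be for the product to make sense, since they count atoms. Then:
\begin{itemize}
\item take logarithms, apply $u\,\partial/\partial u$, and expand each $x/(1-x)$ as a geometric series;
\item collect the coefficient of $u^n\mathbf{p}^{\boldsymbol\beta}$, which arises exactly when $j$ divides every coordinate of $(n,\boldsymbol\beta)$.
\end{itemize}
This gives
\[
\frac{u\,\partial_u A_d^{cm}}{1+A_d^{cm}}=\sum_{n,\boldsymbol\beta}c_d^{cm}(n;\boldsymbol\beta)\,u^n\mathbf{p}^{\boldsymbol\beta}.
\]

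Finally, multiply both sides by $1+A_d^{cm}=\sum_{r\ge0}a_d^{cm}(r;\mathbf{s})u^r\mathbf{p}^{\mathbf{s}}$, using the stated initial values at $r=0$. Comparing coefficients of $u^r\mathbf{p}^{\mathbf{s}}$ then yields~\eqref{eq:sec4cmmultirec}.

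The main obstacle is only bookkeeping: confirming that the signed inclusion--exclusion formula gives nonnegative atom counts, so the product form is legitimate. Alternatively, one can sidestep this by working formally with~\eqref{eq:vmultisetexpcm}, since the logarithmic derivative of the exponential form gives the same identity without any positivity requirement.
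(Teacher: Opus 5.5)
Your proposal is correct and follows the paper's proof: you extract $\overline a_d^{cm}(r;\mathbf{s})$ from \eqref{eq:sec4cmatom} via the inclusion--exclusion expansion of $1-\prod_{i=1}^d(1-p_i)$, then run the logarithmic-derivative argument of Theorem~\ref{thm:sec4ncmultirec} on the multiset identity \eqref{eq:vmultisetexpcm}. The positivity concern you flag does not arise, because your first step already identifies $\overline a_d^{cm}(r;\mathbf{s})$ with an atom count, and in any case $(1-x)^{-k}$ is a valid formal power series for every integer $k$.
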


\begin{proof}
The proof is identical to that of Theorem~\ref{thm:sec4ncmultirec}.  Using
\eqref{eq:sec4cmatom} in place of~\eqref{eq:sec4ncatom}, we first obtain the
displayed formula for $\overline a_d^{cm}(r;\mathbf{s})$ by extracting coefficients
from
\[
\overline A_d^{cm}(u;\mathbf{p})
=
u+\left(1-\prod_{i=1}^d(1-p_i)\right)A_d^{cm}(u;\mathbf{p}).
\]
Applying the same logarithmic-derivative argument to the multiset identity
\eqref{eq:vmultisetexpcm} gives~\eqref{eq:sec4cmmultirec}.
\end{proof}

We can then define the analogous length-graded sequences for each $d, \ell \geq 1$:
\[
b_{d,\ell}^{cm}(n) := 
\sum_{\substack{r\ge1,\;\mathbf{s}\ge\mathbf{0}\\ \ell r+2|\mathbf{s}|=n}}
a_d^{cm}(r;\mathbf{s})
\]
The corresponding length-graded sequences also satisfy an effective Euler-transform
recurrence.

\begin{theorem}\label{thm:sec4bdlcmrec}
Fix $d,\ell\ge1$. For $n\ge1$, define
\[
\overline b_{d,\ell}^{cm}(n)
:=
\delta_{n,\ell}
+
\sum_{j=1}^{\min(d,\lfloor n/2\rfloor)}
(-1)^{j+1}\binom{d}{j}\,
b_{d,\ell}^{cm}(n-2j),
\]
with the convention $b_{d,\ell}^{cm}(m)=0$ for $m\le0$. Next define
\[
c_{d,\ell}^{cm}(n)
:=
\sum_{k\mid n} k\,\overline b_{d,\ell}^{cm}(k).
\]
Then, for every $n\ge1$,
\begin{equation}\label{eq:sec4bdlcmrec}
n\,b_{d,\ell}^{cm}(n)
=
c_{d,\ell}^{cm}(n)
+
\sum_{k=1}^{n-1} c_{d,\ell}^{cm}(k)\,b_{d,\ell}^{cm}(n-k).
\end{equation}
\end{theorem}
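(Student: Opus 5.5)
The plan follows the proof of Theorem~\ref{thm:sec4bdlmrec}, with the atom relation \eqref{eq:sec4cmatom} in place of \eqref{eq:sec4ncatom}. Set $B(z):=B_{d,\ell}^{cm}(z)=\sum_{n\ge1}b_{d,\ell}^{cm}(n)z^n$. By the definition of $b_{d,\ell}^{cm}$, we have $B(z)=A_d^{cm}(z^\ell;z^2,\ldots,z^2)$. We first specialize the atom relation \eqref{eq:sec4cmatom} at $u=z^\ell$, $p_i=z^2$. This gives
\[
\overline B(z):=\overline A_d^{cm}(z^\ell;z^2,\ldots,z^2)=z^\ell+\bigl(1-(1-z^2)^d\bigr)B(z).
\]
Expanding $1-(1-z^2)^d=\sum_{j=1}^d(-1)^{j+1}\binom{d}{j}z^{2j}$ and extracting the coefficient of $z^n$ shows that $[z^n]\overline B(z)$ is exactly $\overline b_{d,\ell}^{cm}(n)$ as defined in the statement. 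The truncation $j\le\lfloor n/2\rfloor$ only drops terms with $n-2j<0$, which vanish by convention. The term $j=n/2$ gives $b(0)=0$, which is consistent with $B$ having no constant term.

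Next we specialize the multiset identity \eqref{eq:vmultisetexpcm}. The key observation is that the substitution $u\mapsto z^\ell$, $p_i\mapsto z^2$ commutes with the Adams-type operation $(u,\mathbf p)\mapsto(u^j,p_1^j,\ldots,p_d^j)$. Indeed, $\overline A_d^{cm}(u^j;\mathbf p^j)$ becomes $\overline A_d^{cm}(z^{\ell j};z^{2j},\ldots)=\overline B(z^j)$. Hence
\[
1+B(z)=\exp\Bigl(\sum_{j\ge1}\overline B(z^j)/j\Bigr).
\]
All series involved have zero constant term in $z$ and nonnegative exponents, so the specialization is legitimate as an identity of formal power series.

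Finally we take logarithms, apply $z\,\frac{d}{dz}$, and use
\[
z\frac{d}{dz}\sum_{j\ge1}\frac{\overline B(z^j)}{j}=\sum_{j\ge1}\sum_{k\ge1}k\,\overline b(k)z^{jk}=\sum_{n\ge1}\Bigl(\sum_{k\mid n}k\,\overline b(k)\Bigr)z^n=\sum_{n\ge1}c_{d,\ell}^{cm}(n)z^n .
\]
This yields $zB'(z)=(1+B(z))\sum_{n}c_{d,\ell}^{cm}(n)z^n$. Comparing coefficients of $z^n$ gives \eqref{eq:sec4bdlcmrec}: the constant $1$ contributes $c(n)$, and $B$ contributes the convolution over $1\le k\le n-1$.

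The only delicate point is the justification that specialization commutes with the $j$-fold substitution inside the exponential. This is a formal check on monomials $u^rp^{\mathbf s}\mapsto z^{\ell r+2|\mathbf s|}$, so I expect no real obstacle. The proof is essentially bookkeeping, and the recurrence is effective because $\overline b(k)$ for $k\le n$ only involves $b(m)$ with $m\le n-2$.
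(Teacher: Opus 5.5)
Your proposal is correct and follows the paper's proof essentially step for step. You specialize the atom relation to get $\overline B(z)=z^\ell+\bigl(1-(1-z^2)^d\bigr)B(z)$, specialize the multiset exp--log identity, take the logarithmic derivative, and compare coefficients of $z^n$; your extra remarks on why the substitution $u\mapsto z^\ell$, $p_i\mapsto z^2$ is legitimate and commutes with $(u,\mathbf p)\mapsto(u^j,\mathbf p^j)$ fill in details the paper leaves implicit.
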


\begin{proof}
Let $B_{d,\ell}^{cm}(z) :=\sum_{n\ge1} b_{d,\ell}^{cm}(n)z^n$ and $\overline B_{d,\ell}^{cm}(z) := \sum_{n\ge1} \overline b_{d,\ell}^{cm}(n)z^n$. Then
\[
\overline B_{d,\ell}^{cm}(z)
=
z^\ell+\bigl(1-(1-z^2)^d\bigr)B_{d,\ell}^{cm}(z).
\]
Then specializing~\eqref{eq:sec4cmmaster} gives
\[
1+B_{d,\ell}^{cm}(z)
=
\exp\!\left(
\sum_{j\ge1}\frac{\overline B_{d,\ell}^{cm}(z^j)}{j}
\right).
\]
Taking logarithms, differentiating with respect to $z$, and multiplying by $z$, we obtain
\[
\frac{z(B_{d,\ell}^{cm})'(z)}{1+B_{d,\ell}^{cm}(z)}
=
\sum_{n\ge1}\left(\sum_{k\mid n} k\,\overline b_{d,\ell}^{cm}(k)\right)z^n
=
\sum_{n\ge1} c_{d,\ell}^{cm}(n)z^n.
\]
Multiplying both sides by $1+B_{d,\ell}^{cm}(z)$ and comparing coefficients of $z^n$
gives~\eqref{eq:sec4bdlcmrec}.
\end{proof}

\begin{table}[ht]
\centering
\begin{tabular}{c|l|l}
$d$ & First terms of $b_{d,2}^{cm}(2n)$ & OEIS entry \\
\hline
\\[-12pt]
1 & $1,2,4,9,20,48,115,286,719,1842,4766, 12486, 32973, \ldots$ & \seqnum{A000081} \\
2 & $1,3,8,24,74,243,815,2815,9899,35400,128203,469532,\ldots$ & \seqnum{A394951}\textsuperscript{*} \\
3 & $1,4,13,47,180,731,3055,13121,57473,255930,1154703,\ldots$ & \seqnum{A394952}\textsuperscript{*}\\
4 & $1,5,19,79,356,1702,8395,42516,219732,1154483,\ldots$ & \seqnum{A394953}\textsuperscript{*}
\end{tabular}
\caption{First terms of the sequences $b_{d,2}^{cm}(2n)$ for $d=1,2,3,4$.}
\label{tab:bcmd2}
\end{table}

Notice that
\[
b_{1,\ell}^m(n) = b_{1,\ell}^{cm}(n)
\]
for every $\ell, n \geq 1$. Thus, as shown in Table~\ref{tab:bcmd2}, $b_{1,2}^{cm}(2n)$ 
again reduces to the rooted-tree sequence \seqnum{A000081}. Also, as with $b_{d,2}^m(2n)$, $b_{d,2}^{cm}(2n)$ also counts rooted unordered trees where each unary node has one of $d$ colors. However, while the noncommuting unary case allows arbitrary colorings of the unary nodes, the commuting unary case imposes monotonicity restrictions on the labels as described in Section~\ref{sec33}.

This completes the case-by-case enumeration in the commutative-multiplication regimes. In the final section, we compare the asymptotic growth of the four length-graded families arising from the noncommuting and commuting versions of both the unary operators and the multiplication.

\section{Comparison of the four length-graded families}\label{sec5}

We conclude by comparing the asymptotic behavior of the four length-graded families
\[
b_{d,\ell}(n), \qquad b_{d,\ell}^c(n), \qquad b_{d,\ell}^m(n), \qquad b_{d,\ell}^{cm}(n).
\]
These correspond to the four commutativity regimes studied in Sections~\ref{sec2}--\ref{sec4}. For every $d,\ell\ge1$, define
\[
g_{d,\ell}:=\lim_{n\to\infty}\sqrt{\frac{b_{d,\ell}(2n+2)}{b_{d,\ell}(2n)}}.
\]
We note that this particular definition of the exponential growth rate of $b_{d,\ell}(n)$ allows us to treat the even-support cases for even $\ell$ and the full-support cases for odd $\ell$ in a uniform way. We also define the quantities
\[
g_{d,\ell}^c, \quad
g_{d,\ell}^m, \quad
g_{d,\ell}^{cm}
\]
analogously whenever the corresponding limits exist. In the noncommutative-multiplication settings, the algebraic equations for $B_{d,\ell}(z)$ and $B_{d,\ell}^c(z)$ allow us to determine these growth rates analytically. We record these values first.

\begin{proposition}\label{prop:sec5:gdl}
For every $d,\ell \ge 1$, we have:
\begin{itemize}
\item[\textup{(i)}]
$g_{d,\ell}=\rho_{d,\ell}^{-1}$, where $\rho_{d,\ell}$ is the unique solution in $(0,1)$ of
\[
\rho^{\ell/2}+\sqrt d\,\rho=1.
\]

\item[\textup{(ii)}]
$g_{d,\ell}^c=(\rho_{d,\ell}^c)^{-1}$, where $\rho_{d,\ell}^c$ is the unique solution in $(0,1)$ of
\[
(1-\rho^2)^d+\rho^\ell=2\rho^{\ell/2}.
\]
\end{itemize}
\end{proposition}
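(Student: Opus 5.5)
The plan is to use the algebraic equations for $B_{d,\ell}(z)$ and $B^c_{d,\ell}(z)$ and locate their dominant singularities. The singularity is a square-root branch point. The growth rate of $b(2n)$ is the reciprocal of the modulus of that singularity in the variable $z$.

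\textbf{Part (i).} By \eqref{eq:sec2bdlquad}, $B_{d,\ell}$ is the branch vanishing at $0$ of a quadratic with discriminant
\[
D(z)=(1-z^\ell-dz^2)^2-4dz^{2+\ell}=\bigl(1-z^\ell-dz^2-2\sqrt d\,z^{1+\ell/2}\bigr)\bigl(1-z^\ell-dz^2+2\sqrt d\,z^{1+\ell/2}\bigr).
\]
The first factor equals $1-(z^{\ell/2}+\sqrt d\,z)^2$ for $z>0$. It therefore vanishes exactly where $z^{\ell/2}+\sqrt d\, z=1$. Since $B_{d,\ell}$ has nonnegative coefficients, Pringsheim's theorem places a dominant singularity on the positive real axis. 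There the first factor has a simple root and the second factor is positive.

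Writing $\rho$ for this root, the left side $\rho^{\ell/2}+\sqrt d\rho$ is strictly increasing from $0$ to $1+\sqrt d>1$ on $(0,1)$. This gives existence and uniqueness of $\rho$ in $(0,1)$.

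Near $\rho$ we have $B=(\text{analytic})-(\text{analytic})\sqrt{1-z/\rho}$, which gives $b\sim C\rho^{-n}n^{-3/2}$ along the support. When $\ell$ is even, $B$ is a function of $z^2$, so $-\rho$ is also a singularity. Viewing $B(z)=F(z^2)$ and applying the transfer theorem to $F$ gives $b(2n)\sim C\rho^{-2n}n^{-3/2}$. When $\ell$ is odd, aperiodicity follows from $b(n)>0$ for all large $n$. Taking $\sqrt{b(2n+2)/b(2n)}$ then yields $\rho^{-1}$.

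\textbf{Part (ii).} Treat \eqref{eq:sec3b11func} the same way. It reads $\Delta B^2-(1-z^\ell-\Delta)B+z^\ell=0$ with $\Delta=1-(1-z^2)^d$. The discriminant is
\[
(1-z^\ell-\Delta)^2-4\Delta z^\ell=\bigl((1-z^2)^d-z^\ell\bigr)^2-4z^\ell\bigl(1-(1-z^2)^d\bigr).
\]
Set $q=(1-z^2)^d$ and $t=z^\ell$. The discriminant becomes $(q-t)^2-4t(1-q)=(q+t)^2-4t$, which factors as $(q+t-2\sqrt t)(q+t+2\sqrt t)$. The first factor vanishing is exactly $(1-\rho^2)^d+\rho^\ell=2\rho^{\ell/2}$.

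For uniqueness, rewrite this as $(1-\rho^2)^d=2\rho^{\ell/2}-\rho^\ell=1-(1-\rho^{\ell/2})^2$, that is, $1-(1-\rho^2)^d=(1-\rho^{\ell/2})^2$. On $(0,1)$ the left side increases from $0$ to $1$ and the right side decreases from $1$ to $0$. Hence there is exactly one root.

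\textbf{Main obstacle.} The hardest step is rigor at the singularity in both parts. I must check that no smaller-modulus singularity occurs. Such a singularity could arise from a root of the first factor off the positive axis of equal modulus, or from the leading coefficient $dz^2$ (respectively $\Delta$) vanishing, which only happens at $z=0$.

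I would handle this with the standard result for positive algebraic generating functions defined by a polynomial system, namely the Drmota–Lalley–Woods / Flajolet–Sedgewick schema for positive, aperiodic, nonlinear implicit equations. The functional equation has the form $B=G(z,B)$ with $G$ having nonnegative coefficients and nonlinear in $B$. This guarantees a unique dominant square-root singularity determined by the characteristic system $B=G$, $1=G_B$, which coincides with the discriminant root found above. Periodicity for even $\ell$ is handled by the substitution $z^2\mapsto x$, and the ratio limit then follows from the $n^{-3/2}$ asymptotics.
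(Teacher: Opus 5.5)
Your main line is the paper's. You factor the discriminant, use Pringsheim and the simple zero to get a square-root singularity at $\rho$, apply the transfer theorem, and pass to $x=z^2$ for even $\ell$. Your opposite-monotonicity argument for uniqueness of $\rho^c_{d,\ell}$ is equivalent to the paper's computation $F_{d,\ell}'<0$. You go beyond the paper, which simply asserts dominance after Pringsheim, by flagging that other singularities on the circle of convergence must be excluded. For (i) your fix is sound: \eqref{eq:sec2bdlfunc} is a positive polynomial equation, nonlinear in $B$, and you checked aperiodicity. There it can even be done by hand: a zero of the discriminant satisfies $w\pm\sqrt d\,z=\pm1$ with $w^2=z^\ell$, and on $|z|=\rho_{d,\ell}$ the triangle inequality forces $z=\pm\rho_{d,\ell}$ with $z^\ell=\rho_{d,\ell}^\ell$.

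The same fix fails as stated for (ii).
\begin{itemize}
\item \emph{The equation is not positive.} Equation \eqref{eq:sec3b11func} reads $B=G(z,B)$ with $G(z,w)=z^\ell+z^\ell w+\bigl(1-(1-z^2)^d\bigr)(w+w^2)$. But $1-(1-z^2)^d=\sum_{j=1}^d(-1)^{j+1}\binom dj z^{2j}$ has negative coefficients for $d\ge2$ (e.g.\ $2z^2-z^4$ when $d=2$). So neither the smooth implicit-function schema nor Drmota--Lalley--Woods applies. The daffodil-type inequality $|G_w(z,B(z))|<G_w(|z|,B(|z|))$ behind uniqueness of the dominant singularity needs nonnegative coefficients. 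The modulus comparison that works in (i) also does not go through directly here.
\item \emph{The claim about $\Delta$ is false.} You say $\Delta$ vanishes only at $z=0$, but for $d\ge2$ it also vanishes at $z^2=1-\omega$ with $\omega^d=1\ne\omega$ (e.g.\ $z=\pm\sqrt2$ for $d=2$). For some parameters, such as $d\ge7$ with $\ell$ large, some of these zeros lie inside $|z|<\rho^c_{d,\ell}$. They are harmless, since $B^c_{d,\ell}$ is analytic on its disc of convergence, but the argument should not rest on that claim.
\end{itemize}

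The repair is to run the schema on a genuinely positive equation. By \eqref{eq:sec3atom-pre} and \eqref{eq:sec3nonunary}, the atom series $\overline B:=\overline A^c_d(z^\ell;z^2,\ldots,z^2)$ satisfies
\[
\overline B=z^\ell+\bigl((1-z^2)^{-d}-1\bigr)\Bigl(z^\ell+\frac{\overline B^{\,2}}{1-\overline B}\Bigr).
\]
The right-hand side has nonnegative coefficients, is analytic for $|z|<1$, $|w|<1$, is nonlinear in $w$, and is aperiodic (after $x=z^2$ if $\ell$ is even). Next, $B^c_{d,\ell}=\overline B/(1-\overline B)$. Since $\overline B(\rho^c_{d,\ell})=B^c_{d,\ell}(\rho^c_{d,\ell})/(1+B^c_{d,\ell}(\rho^c_{d,\ell}))<1$, we have $|\overline B|<1$ on the closed disc. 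Hence $B^c_{d,\ell}$ has exactly the singularities of $\overline B$ there. Finally, the characteristic point must be $\rho^c_{d,\ell}$, because that is the only singularity on the positive axis.
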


\begin{proof}
We first consider $B_{d,\ell}(z)$. By Theorem~\ref{thm:sec2bdlgeneral},
\[
B_{d,\ell}(z)
=
\frac{1-z^\ell-dz^2-\sqrt{\Delta_{d,\ell}(z)}}{2dz^2},
\]
where the discriminant is
\begin{align*}
\Delta_{d,\ell}(z)
&=
(1-z^\ell-dz^2)^2-4dz^{\ell+2}\\
&=
\bigl(1-z^{\ell/2}-\sqrt d\,z\bigr)
\bigl(1-z^{\ell/2}+\sqrt d\,z\bigr)
\bigl(1+z^{\ell/2}-\sqrt d\,z\bigr)
\bigl(1+z^{\ell/2}+\sqrt d\,z\bigr).
\end{align*}
Hence, the smallest positive zero of $\Delta_{d,\ell}(z)$ is the unique solution
$\rho_{d,\ell}\in(0,1)$ of
\[
z^{\ell/2}+\sqrt d\,z=1,
\]
since the left-hand side is strictly increasing on $(0,\infty)$.
Because $B_{d,\ell}(z)$ has nonnegative coefficients, Pringsheim's theorem (see, for instance, Flajolet and Sedgewick~\cite[Chapter~IV]{FlajoletS09}) implies that the singularity of smallest modulus must occur at a positive real value of $z$, so the smallest positive zero $\rho_{d,\ell}$ of the discriminant is the dominant singularity (that is, the singularity closest to the origin), and hence determines the radius of convergence.
Moreover, this zero of the discriminant is simple, so $B_{d,\ell}(z)$ has a square-root
singularity at $z=\rho_{d,\ell}$. It follows from the standard transfer theorem for
square-root singularities that
\[
b_{d,\ell}(n)\sim C_{d,\ell}\,\rho_{d,\ell}^{-n}n^{-3/2}
\]
when $\ell$ is odd, while for even $\ell$ the same conclusion holds for the even
subsequence after writing $B_{d,\ell}(z)=\widetilde B_{d,\ell}(z^2)$; see
Flajolet and Sedgewick~\cite[Chapter~VI]{FlajoletS09}. In either case, we have
\[
g_{d,\ell} = \lim_{n \to \infty} \sqrt{\frac{b_{d,\ell}(2n+2)}{b_{d,\ell}(2n)}} = \rho_{d,\ell}^{-1}.
\]
Next consider $B_{d,\ell}^c(z)$. By Theorem~\ref{thm:sec3bdlcgeneral}, it satisfies a quadratic equation whose discriminant is
\[
\Delta_{d,\ell}^c(z)
=
\bigl((1-z^2)^d+z^\ell\bigr)^2-4z^\ell.
\]
Thus the positive zeros of $\Delta_{d,\ell}^c(z)$ are exactly the positive solutions of
\[
(1-z^2)^d+z^\ell=2z^{\ell/2}.
\]
Let
\[
F_{d,\ell}(z):=(1-z^2)^d+z^\ell-2z^{\ell/2}.
\]
Then $F_{d,\ell}(0)=1$ and $F_{d,\ell}(1)=-1$, so $F_{d,\ell}$ has a zero in $(0,1)$.
Also,
\[
F_{d,\ell}'(z)
=
-2dz(1-z^2)^{d-1}
-\ell z^{\ell/2-1}(1-z^{\ell/2})<0
\qquad (0<z<1),
\]
so this zero is unique; denote it by $\rho_{d,\ell}^c$.
Again, by Pringsheim's theorem, $\rho_{d,\ell}^c$ is the dominant singularity of
$B_{d,\ell}^c(z)$. Since it is a simple zero of the discriminant, the same square-root transfer theorem yields
\[
b_{d,\ell}^c(n)\sim C_{d,\ell}^c\,(\rho_{d,\ell}^c)^{-n}n^{-3/2}
\]
when $\ell$ is odd, and the corresponding even-subsequence asymptotic when $\ell$ is even. Therefore, 
\[
g_{d,\ell}^c = \lim_{n \to \infty} \sqrt{\frac{b_{d,\ell}^c(2n+2)}{b_{d,\ell}^c(2n)}}
=
(\rho_{d,\ell}^c)^{-1}.
\]
\end{proof}

Figure~\ref{fig:gdl} compares the resulting exponential growth rates.
The values $g_{d,\ell}$ and $g_{d,\ell}^c$ are determined analytically from
Proposition~\ref{prop:sec5:gdl}. In the commutative-multiplication settings, we do not presently have comparably explicit formulas, so in Figure~\ref{fig:gdl} we plot numerical estimates, denoted by $\hat g_{d,\ell}^m$ and $\hat g_{d,\ell}^{cm}$. More precisely, these estimates are computed by evaluating 
\[
\sqrt{\frac{b(2n+2)}{b(2n)}}\left(\frac{n+1}{n}\right)^{3/4}
\]
at values of $n$ around 100, using the expected $n^{-3/2}$ subexponential correction. The plotted values are rounded to four decimal places.

\def\d{8}
\def\xlb{0}
\def\xub{\d}
\def\ylb{0}
\def\yub{5}
\def\xbuf{0.5}
\def\ybuf{0.2}

\newcommand{\drawseries}[3]{
\draw[thick, mark=*, mark options={fill}, mark size=1pt] plot coordinates {#1};
     \node[anchor=west] at #3 {#2};
}

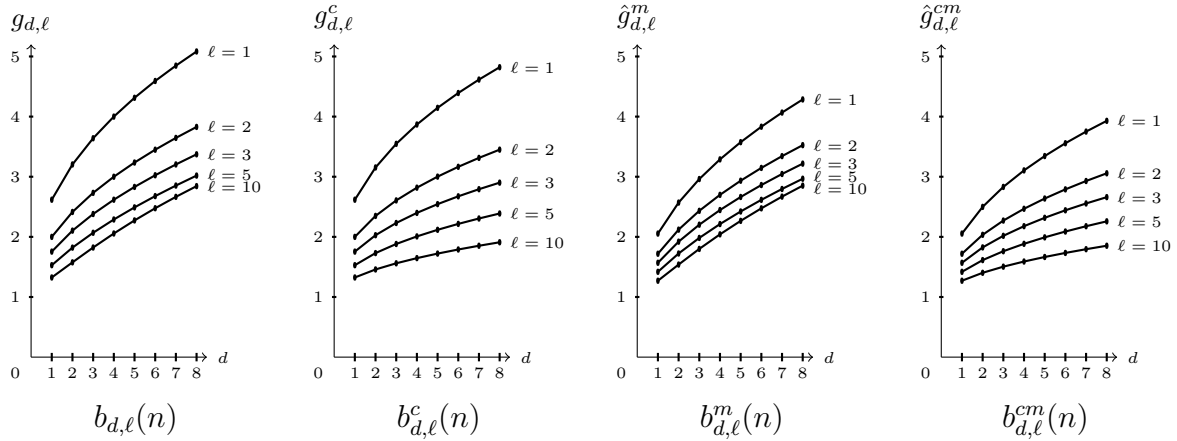
\begin{figure}[htb]
\centering
\begin{tabular}{cccc}

\begin{tikzpicture}[scale=0.5, xscale=0.55, yscale=1.6, font=\tiny]

\def\dataone{(1,2.618) (2,3.2043) (3,3.6399) (4,4) (5,4.3128) (6,4.5925) (7,4.8474) (8,5.083)}\def\datatwo{(1,2) (2,2.4142) (3,2.7321) (4,3) (5,3.2361) (6,3.4495) (7,3.6458) (8,3.8284)}\def\datathree{(1,1.7549) (2,2.1037) (3,2.3802) (4,2.618) (5,2.8305) (6,3.0245) (7,3.2044) (8,3.3729)}
\def\datafour{(
1,1.618) (2,1.9319) (3,2.1889) (4,2.4142) (5,2.618) (6,2.8059) (7,2.9812) (8,3.1463
)}
\def\datafive{(
 1,1.5289) ( 2, 1.8211) ( 3, 2.0682) ( 4, 2.2888) ( 5, 2.4905) ( 6, 2.6777) (7, 2.8532) ( 8, 3.0191
)}
\def\dataten{(
 1 ,1.3247) ( 2, 1.5762) ( 3, 1.8227) ( 4, 2.0560) ( 5, 2.2735) ( 6, 2.4761) (
 7, 2.6656) ( 8 ,2.8437
 )}

\draw[->] (0,0) -- (\xub+\xbuf,0);
\draw[->] (0,0) -- (0,\yub+\ybuf);

\foreach \x in {\xlb,1,...,\xub}{
    \ifthenelse{\NOT 0 = \x}{
        \draw[thick] (\x,-2pt) -- (\x,2pt);
        \node[anchor=north] at (\x,0) {$\x$};
    }{}
}

\foreach \y in {1,2,3,4,5}{
    \draw[thick] (-3pt,\y) -- (3pt,\y);
    \node[anchor=east] at (0,\y) {$\y$};
}

\node[anchor=north east] at (0,0) {$0$};
\node[anchor=west] at (\xub+\xbuf,0) {$d$};
\node[anchor=south] at (0,\yub+\ybuf) {\footnotesize $ g_{d,\ell}$};

\drawseries{\dataone}{$\ell=1$}{(8,5.083)}
\drawseries{\datatwo}{$\ell=2$}{(8,3.8284)}
\drawseries{\datathree}{$\ell=3$}{(8,3.3729)}
\drawseries{\datafive}{$\ell=5$}{(8,3.0291)}
\drawseries{\dataten}{$\ell=10$}{(8,2.8337)}

\end{tikzpicture}
&
\begin{tikzpicture}[scale=0.5, xscale=0.55, yscale=1.6, font=\tiny]

\def\dataone{(
  1, 2.618) ( 2, 3.1542) ( 3, 3.5471) ( 4, 3.8686) (5, 4.1459) ( 6, 4.3924) ( 7, 4.6158) ( 8, 4.8213)}
  \def\datatwo{(1 ,2) (2 ,2.3486) ( 3, 2.6062) ( 4, 2.8182) ( 5, 3.0017) ( 6, 3.1651) ( 7, 3.3136) ( 8, 3.4504)}
  \def\datathree{( 1 ,1.7549) (2 ,2.0277) ( 3, 2.2308) ( 4, 2.3986) ( 5, 2.5442) ( 6, 2.6742) ( 7, 2.7926) ( 8, 2.9017)}
\def\datafour{( 1 ,1.618) ( 2, 1.8478) ( 3, 2.0198) ( 4, 2.1625) ( 5, 2.2865) ( 6, 2.3975) ( 7, 2.4987) ( 8, 2.5921)}
\def\datafive{(
  1 ,1.5289) ( 2 ,1.7301) ( 3, 1.8816) ( 4, 2.0075) ( 5, 2.1173) ( 6, 2.2157) ( 7, 2.3055) ( 8, 2.3884
)}
\def\dataten{(
 1 ,1.3247) ( 2, 1.4580) ( 3, 1.5603) ( 4, 1.6463) ( 5, 1.7218) ( 6, 1.7899) ( 7, 1.8522) ( 8, 1.9101
 )}

\draw[->] (0,0) -- (\xub+\xbuf,0);
\draw[->] (0,0) -- (0,\yub+\ybuf);

\foreach \x in {\xlb,1,...,\xub}{
    \ifthenelse{\NOT 0 = \x}{
        \draw[thick] (\x,-2pt) -- (\x,2pt);
        \node[anchor=north] at (\x,0) {$\x$};
    }{}
}

\foreach \y in {1,2,3,4,5}{
    \draw[thick] (-3pt,\y) -- (3pt,\y);
    \node[anchor=east] at (0,\y) {$\y$};
}

\node[anchor=north east] at (0,0) {$0$};
\node[anchor=west] at (\xub+\xbuf,0) {$d$};
\node[anchor=south] at (0,\yub+\ybuf) {\footnotesize $ g_{d,\ell}^c$};

\drawseries{\dataone}{$\ell=1$}{(8,4.8213)}
\drawseries{\datatwo}{$\ell=2$}{(8,3.4504)}
\drawseries{\datathree}{$\ell=3$}{(8,2.9017)}
\drawseries{\datafive}{$\ell=5$}{(8,2.3884)}
\drawseries{\dataten}{$\ell=10$}{(8,1.9101)}

\end{tikzpicture}

&
\begin{tikzpicture}[scale=0.5, xscale=0.55, yscale=1.6, font=\tiny]

\def\dataone{(
1,2.0547) ( 2, 2.57) ( 3, 2.9615) ( 4 ,3.2886) ( 5, 3.5748) ( 6, 3.8319) ( 7, 4.0672) ( 8, 4.2853
)}\def\datatwo{(
 1 ,1.7194) ( 2 ,2.1201) ( 3 ,2.4332) ( 4, 2.699) ( 5, 2.9339) ( 6, 3.1467) ( 7, 3.3426) ( 8, 3.5251
)}\def\datathree{(
 1 ,1.5692) ( 2, 1.9203) ( 3, 2.2025) ( 4, 2.4459) ( 5, 2.6634) ( 6, 2.8619) ( 7, 3.0457) ( 8, 3.2178
)}\def\datafour{(
 1 ,1.4798) ( 2, 1.8024) ( 3, 2.0694) ( 4, 2.3032) ( 5, 2.5142) ( 6, 2.708) ( 7, 2.8884) ( 8 ,3.0579
)}
\def\datafive{(
1,1.4190) (2,1.7233) (3,1.9825) (4,2.2128) (5,2.4222) (6,2.6157) (7,2.7965) (8,2.9668
)}
\def\dataten{(
1,1.2706) (2,1.5393) (3,1.7997) (4,2.0419) (5,2.2661) (6,2.4745) (7,2.6692) (8,2.8518
 )}

\draw[->] (0,0) -- (\xub+\xbuf,0);
\draw[->] (0,0) -- (0,\yub+\ybuf);

\foreach \x in {\xlb,1,...,\xub}{
    \ifthenelse{\NOT 0 = \x}{
        \draw[thick] (\x,-2pt) -- (\x,2pt);
        \node[anchor=north] at (\x,0) {$\x$};
    }{}
}

\foreach \y in {1,2,3,4,5}{
    \draw[thick] (-3pt,\y) -- (3pt,\y);
    \node[anchor=east] at (0,\y) {$\y$};
}

\node[anchor=north east] at (0,0) {$0$};
\node[anchor=west] at (\xub+\xbuf,0) {$d$};
\node[anchor=south] at (0,\yub+\ybuf) {\footnotesize $ \hat{g}_{d,\ell}^m$};

\drawseries{\dataone}{$\ell=1$}{(8,4.2853)}
\drawseries{\datatwo}{$\ell=2$}{(8,3.5251)}
\drawseries{\datathree}{$\ell=3$}{(8,3.2178)}
\drawseries{\datafive}{$\ell=5$}{(8,2.9968)}
\drawseries{\dataten}{$\ell=10$}{(8,2.8018)}

\end{tikzpicture}

&
\begin{tikzpicture}[scale=0.5, xscale=0.55, yscale=1.6, font=\tiny]

\def\dataone{(
 1, 2.0547) ( 2, 2.4988) ( 3, 2.8308) ( 4, 3.1055) ( 5, 3.3439) ( 6, 3.5569) ( 7, 3.7508) ( 8 ,3.9297
)}\def\datatwo{(
 1, 1.7194) ( 2, 2.0343) ( 3, 2.2709) ( 4, 2.4671) ( 5, 2.6376) ( 6, 2.79) ( 7, 2.9289) ( 8 ,3.0571
)}\def\datathree{(
 1, 1.5692) ( 2, 1.8248) ( 3, 2.0174) ( 4, 2.1776) ( 5, 2.317) ( 6, 2.4418) ( 7, 2.5555) ( 8 ,2.6605
)}\def\datafour{(
 1, 1.4798) ( 2, 1.6992) ( 3, 1.8652) ( 4, 2.0035) ( 5, 2.1241) ( 6, 2.2321) ( 7, 2.3306) ( 8, 2.4217
)}
\def\datafive{(
1,1.4190) (2,1.6134) (3,1.7612) (4,1.8844) (5,1.9919) (6,2.0886) (7,2.1769) (8,2.2582
)}
\def\dataten{(
1,1.2706) (2,1.4026) (3,1.5043) (4,1.5899) (5,1.6651) (6,1.7328) (7,1.7948) (8,1.8524
 )}

\draw[->] (0,0) -- (\xub+\xbuf,0);
\draw[->] (0,0) -- (0,\yub+\ybuf);

\foreach \x in {\xlb,1,...,\xub}{
    \ifthenelse{\NOT 0 = \x}{
        \draw[thick] (\x,-2pt) -- (\x,2pt);
        \node[anchor=north] at (\x,0) {$\x$};
    }{}
}

\foreach \y in {1,2,3,4,5}{
    \draw[thick] (-3pt,\y) -- (3pt,\y);
    \node[anchor=east] at (0,\y) {$\y$};
}

\node[anchor=north east] at (0,0) {$0$};
\node[anchor=west] at (\xub+\xbuf,0) {$d$};
\node[anchor=south] at (0,\yub+\ybuf) {\footnotesize $ \hat{g}_{d,\ell}^{cm}$};

\drawseries{\dataone}{$\ell=1$}{(8,3.9297)}
\drawseries{\datatwo}{$\ell=2$}{(8,3.0571)}
\drawseries{\datathree}{$\ell=3$}{(8,2.6605)}
\drawseries{\datafive}{$\ell=5$}{(8,2.2582)}
\drawseries{\dataten}{$\ell=10$}{(8,1.8524)}

\end{tikzpicture}
\\
$b_{d,\ell}(n)$ &
$b_{d,\ell}^c(n)$ &
$b_{d,\ell}^m(n)$ &
$b_{d,\ell}^{cm}(n)$ 
\end{tabular}
\caption{Comparison of the exponential growth rates of the four length-graded sequence families for $\ell=1,2,3,5,10$. In the first two panels the values are exact and are determined by Proposition~\ref{prop:sec5:gdl}; in the last two panels the plotted quantities $\hat g_{d,\ell}^m$ and $\hat g_{d,\ell}^{cm}$ are numerical estimates.}
\label{fig:gdl}
\end{figure}

Taken together, the results of Sections~\ref{sec2}--\ref{sec5} show that the same
basic free-operated construction leads to four distinct enumerative regimes,
depending on whether the unary operators and the associative multiplication commute.
In the noncommutative-multiplication settings, this yields explicit algebraic
generating functions, Narayana-type formulas, and lattice-path and tree models;
when multiplication is commutative, the sequence-of-atoms decomposition is replaced
by a multiset-of-atoms decomposition, leading instead to Euler-transform recurrences
and rooted-tree analogs. The asymptotic comparison in this section highlights the
different growth behavior of these four regimes. In particular, this framework both
recovers several classical sequences in the one-operator case and organizes a broad
family of higher-parameter analogs within a common combinatorial setting, several of
which were submitted to the OEIS as part of this work.

\section{Acknowledgment}

We thank Neil Sloane and the many contributors to the On-Line Encyclopedia of Integer Sequences (OEIS) for developing and curating this valuable resource for mathematical research.

\bigskip
\hrule
\bigskip

\noindent
2020 \emph{Mathematics Subject Classification}:
Primary 05A15; Secondary 05A19, 05C05, 18M65.

\noindent
\emph{Keywords}: multi-operator monomial, unary operator, 
Narayana number, Catalan number, Motzkin path, Schr\"oder path, rooted tree, Euler transform, nonsymmetric operad.

\bigskip
\hrule
\bigskip
\noindent
(Concerned with sequences \seqnum{A000012}, \seqnum{A000081}, \seqnum{A000108}, \seqnum{A001003}, \seqnum{A001263}, \seqnum{A002620}, \\ \seqnum{A004148}, \seqnum{A007564}, \seqnum{A023427}, \seqnum{A023432}, \seqnum{A055277}, \seqnum{A055278}, \seqnum{A055279}, \seqnum{A055280}, \seqnum{A059231}, \seqnum{A159771}, \seqnum{A212364}, \seqnum{A239204}, \seqnum{A329691}, \seqnum{A394939}, \seqnum{A394940}, \seqnum{A394941}, \seqnum{A394942}, \seqnum{A394943}, \seqnum{A394944}, \seqnum{A394945}, \seqnum{A394946}, \seqnum{A394947}, \seqnum{A394948}, \seqnum{A394949}, \seqnum{A394950}, \seqnum{A394951}, \seqnum{A394952}, and \seqnum{A394953}.)

\bigskip
\hrule
\bigskip

\end{document}